\newtheorem{theorem}{Theorem}
\newtheorem{theorem2}{Theorem}[section]
\newtheorem{assumption}{Assumption}
\newtheorem{corollary}[theorem2]{Corollary}
\newtheorem{lemma}[theorem2]{Lemma}
\newtheorem{proposition}[theorem2]{Proposition}
\newtheorem{definition}[theorem2]{Definition}
\newtheorem{hypothesis}[theorem2]{Hypothesis}%
\newtheorem{remark}[theorem2]{Remark}
\newcommand{\comment}[1]{}
\newcommand{\E}{\mathbb{E}}
\newcommand{\F}{\mathbb{F}}
\newcommand{\G}{\mathbb{G}}
\newcommand{\p}{\mathbb{P}}
\newcommand{\ind}{\mathbf{1}}
\newcommand{\T}{T \wedge \tau}
\DeclareMathOperator*{\esssup}{ess\,sup}
\DeclareMathOperator*{\essinf}{ess\,inf}
\providecommand{\new}[1]{#1}
\newenvironment{newcontent}{\par}{\par}
\definecolor{darkgreen}{rgb}{0.0,0.5,0.0}
\providecommand{\addd}[1]{{\color{darkgreen}#1}}
\setlist[enumerate,1]{label=(\arabic*),leftmargin=*}
\title{2BSDE with uncertain horizon and application to stochastic control in erratic environments}
\author{Alberto \textsc{GENNARO}\footnote{alberto.gennaro@berkeley.edu}\; and Thibaut \textsc{MASTROLIA}\footnote{mastrolia@berkeley.edu}}
\date{Department of Industrial Engineering and Operations Research\\ University of California, Berkeley\\ \today}
\begin{document}

\maketitle

\begin{abstract}
We investigate the existence and uniqueness of non-Markovian second-order backward stochastic differential equations with an uncertain terminal horizon and establish comparison principles under the assumption that the driver is Lipschitz continuous in $y,z$ and stochastic Lipschitz in the jump $u$. The terminal time is both random and exogenous, and it may not be adapted to the Brownian filtration, leading to a singular jump in the 2BSDE decomposition. We also provide a connection between this new class of 2BSDE and a fully nonlinear PDE in a Markovian setting. Our theoretical results are applied to non-Markovian stochastic control problems in two settings: (1) when an agent seeks to maximize utility from a payoff received at an uncertain terminal time by controlling both the drift and volatility of a diffusion process; and (2) when the agent contends with volatility uncertainty stemming from an external source, referred to as Nature, and optimizes the drift in a worst-case scenario for the ambiguous volatility. We term this class of problems erratic stochastic control, reflecting the dual uncertainty in both model parameters and the timing of the terminal horizon.
\end{abstract}

\section{Introduction}
\subsection{Backward Stochastic Differential Equations and stochastic control}
Backward Stochastic Differential Equations (BSDE for short) are a class of stochastic equations characterized by a terminal condition $\xi$ and a generator function $F$, which depend on time $t$, randomness $\omega$ and space variables $y,z$. On a probability space $(\Omega,\mathcal F,\mathbb P)$ equipped with a Brownian motion $W$. A solution to a BSDE parametrized by $(\xi,F)$ is a pair of stochastic processes $(Y,Z)$, adapted to the filtration of the Brownian motion satisfying the following backward equation\footnote{All equations in this introduction are presented informally and without full mathematical rigor. Their purpose is solely to motivate the study; rigorous treatment follows in the main body of this work.}
\[Y_t = \xi -\int_t^T F(s,\omega,Y_s,Z_s)ds-\int_t^T Z_s dW_s,\; t\leq T,\; \mathbb P\text{-a.s.}\]
A very special case arises when $\xi=g(X_T)$ where $X$ is a solution to a stochastic differential equation and $F(s,\omega,Y_s,Z_s)=F(s,X_s(\omega),Y_s(\omega),Z_s(\omega))$, the corresponding BSDE is set in a Markovian framework. The well-posedness of a Markovian BSDE can thus be reduced to the existence of a solution in a PDE where $Y$ is the solution and $Z$ is its gradient, see, for example, \cite{ma2002representation}. This type of equation was first introduced by Bismut in \cite{bismut1973conjugate}, proving the existence and uniqueness of the solution when the generator $F$ is assumed to be linear in $Y$ and $Z$.  It has then been generalized in the seminal paper \cite{pardoux1990adapted} under Lipschitz conditions in a Markovian setting, then extended to a non-Markovian framework with applications in finance (pricing and hedging of derivatives) in the celebrated article\cite{el1997backward} or to more general assumptions to the generator $F$, like stochastic Lipschitz or quadratic growth \cite{bahlali2001backward,briand2008bsdes,briand2008quadratic} or extended to jump processes \cite{buckdahn1994bsde,rong1997solutions,quenez2013bsdes,papapantoleon2018existence}. \textcolor{blue}{Since these foundational studies, BSDEs have been extensively investigated, together with their links with stochastic control and applications in diverse fields of applied mathematics including but not limited to utility maximization, see for example \cite{el1997backward,rouge2000pricing,hu2005utility,ceci2014bsdes,morlais2009quadratic} and games theory.} We refer more generally to \cite{yong1999backward,touzi2012optimal} for more details of the BSDE theory.\\

In this work, we focus on a stochastic control problem, in a non-Markovian setting in which a diffusion process $X$ is controlled both in drift and volatility:
\[dX_t=\mu(t, X, \alpha_t)dt+\sigma(t,X,\beta_t) dW_t,\; \nu=(\alpha,\beta).\]
Echoing the analysis presented in \cite[Chapter 9]{zhang2017stochastic} and \cite[Chapter 4]{yong1999backward}, different types of control strategies can be considered in stochastic control problems. When the control process is open-loop - meaning it depends on time $t$ and the underlying Brownian motion $W$ - the problem is naturally framed in a \emph{strong formulation}. This setting leads to the study of Forward-Backward Stochastic Differential Equations (FBSDEs for short), and the optimal control can be characterized using the stochastic maximum principle, originally formulated by Pontryagin in the 1950s; see, for instance, \cite{yong1999backward,oksendal2010maximum}. Alternatively, in the case of \emph{closed-loop} (or feedback) control, where the control depends on $t$ and the controlled state process $X$, the problem is typically treated in a \emph{weak formulation}. This framework often arises in contexts such as Stackelberg games or contract theory \cite[Section 4]{cvitanic2018dynamic}. In this setting, Girsanov’s theorem is employed to shift to a family of equivalent probability measures $\mathbb{P}^{\alpha}$, under which a new Brownian motion is defined. This change of measure facilitates the analysis of the control problem via BSDE.

\subsection{Second order BSDE and volatility-controlled or ambiguous optimization}

Controlling the volatility becomes more challenging in the weak formulation setting, as it involves changing the probability measure associated with a diffusion process and its volatility. This requires constructing a family of non-equivalent probability measures, denoted by $\mathbb P^{\alpha,\beta}$ through a weak formulation and weak solution of the controlled SDE considered. The optimal control problem then consists of maximizing an objective function over a set of probability measures parameterized by the control variables $\alpha$ and $\beta$. Under each such measure, the objective function is either a supermartingale or a submartingale, depending on the nature of the problem. The optimal measure corresponds to the one under which a martingale optimality principle is satisfied. As an extension of the BSDE introduced earlier, we then expect the problem to be reduced to a second-order BSDE (2BSDE for short), with terminal condition $\xi$ and generator $F$ given by 
\[Y_t=\xi-\int_t^T \hat F_s(Y_s,Z_s)ds -\int_t^T \hat{\sigma}_s^{1/2}Z_s dX_s+K_T-K_t,\; t\leq T,\; \mathbb P\text{-a.s.}, \forall \mathbb P\in \mathcal P,\]
where $\mathcal P$ is a set of non-equivalent probability measures and $\hat F$ is the generator of the 2BSDE determined through the universal representation of the quadratic variation $\hat{\sigma}_t$ of $X$ as soon as the family $\mathcal P$ satisfies regularity properties, see \cite{karandikar1995pathwise, nutz2015robust}. A solution to this 2BSDE consists of a triple $(Y,Z,K)$ of stochastic processes where the $K$ process represents the second-order representation, is non-decreasing, and satisfies the minimality condition
\[0=\essinf_
{\mathbb P}\mathbb E^\mathbb P[K_T-K_t|\mathcal F_t].\]
The existence and uniqueness of solutions to 2BSDEs were first studied in \cite{soner2012wellposedness,cheridito2007second}, and later extended to more general frameworks in \cite{kazi2015second,possamai2015weak,lin2016new,kim2024representation}, under the assumption that both the terminal condition and the generator of the 2BSDE satisfy strong continuity conditions with respect to $\omega$.
An extension to random horizons, where the terminal time is adapted to the diffusion filtration, was developed in \cite{lin2020second}. The connections between 2BSDEs and stochastic games or control problems involving controlled volatility or model uncertainty (ambiguity) have been explored in, for example, \cite{matoussi2014second,matoussi2015robust,popier2019second,mezdoud2021alpha}.
Fundamental advances were made in \cite{possamai2018stochastic}, where the regularity assumptions on both the terminal condition and the generator were relaxed using a more general class of probability measures. This was further extended in \cite{lin2020second,hun2024wellposedness,denis2024second}. These relaxations have been crucial in solving Stackelberg games and stochastic optimization problems involving controlled volatility or ambiguity, see for example, \cite{cvitanic2018dynamic,mastrolia2018moral,hernandez2019contract,lin2022random,mastrolia2025agency}.

\subsection{BSDE with erratic horizon}
In this work, we are interested in stochastic control problems where the horizon is not only random but also unpredictable and erratic, in the sense that it is exogenous and not adapted to the filtration of the controlled process $X$. To address this difficulty, we rely on the theory of filtration enlargement, which allows us to incorporate additional information into the available filtration by accounting for the extra information provided by the termination time. This type of problem is particularly relevant in the context of credit risk, where a firm’s default is modeled as a random time that is not adapted to the filtration generated by observable market variables. See, for example, \cite{kabanov2006enlargement,acciaio2016arbitrage}. We refer to 
\cite{jeulin2006grossissements, bielecki2013credit,aksamit2017enlargement} for a non-exhaustive list of monographs on enlargement of filtration. In a non-Markovian setting, optimization problems involving an erratic time $\tau$ have been investigated in \cite{kharroubi2013mean,jeanblanc2014note,jeanblanc2015utility}. It is proven that one can reduce the optimization to solve a BSDE with an erratic horizon with a single jump of the form:
\begin{equation}\label{BSDEintro}
Y_t = \xi - \int_{t \wedge \tau}^{T \wedge \tau} Z_s \cdot dW_s 
      - \int_{t \wedge \tau}^{T \wedge \tau} U_s dH_s 
      - \int_{t \wedge \tau}^{T \wedge \tau} F(s, Y_s, Z_s, U_s) ds, 
      \quad t \in [0, T],\; \mathbb P\text{-a.s.},
\end{equation}
where $H_s=\mathbf 1_{\tau\leq s}$ indicates if a random default occurs or not. Under different assumptions regarding the default time $\tau$, the generator of the associated BSDE may become degenerate. This type of degeneracy has been extensively studied in the literature, particularly in the context of insurance. See, for example, \cite{kruse2016minimal,huang2020kind,aurand2023epstein,di2020weak,jiao2018information}. In particular, \cite[Theorem 4.1]{kharroubi2013mean} or \cite[Proposition 4.4]{jeanblanc2014note} prove that the solution $(Y,Z,U)$ of this BSDE can be reduced to solve a Brownian driven BSDE (possibly with a degenerated generator) given by
\[
Y^b_t = \xi^b 
- \int_t^T F^b(s, Y^b_s, Z^b_s, \xi^a_s - Y^b_s) \, ds 
- \int_t^T Z^b_s \cdot dW_s, 
\quad t \in [0, T],
\]
and the solution to \eqref{BSDEintro} is given by 
\begin{align*}
Y_t = Y^b_t \mathbf{1}_{\{t < \tau\}} + \xi^a_\tau  \mathbf{1}_{\{t \geq \tau\}},\, Z_t = Z^b_t  \mathbf{1}_{\{t \leq \tau\}},\, U_t = (\xi^a_t - Y^b_t)  \mathbf{1}_{\{t \leq \tau\}},
\end{align*}
where $\xi^b,\xi^a,F^b$ are representations of $\xi$ and $F$ with respect to the event $\{\tau\leq T\}$. 
This result is however only related to optimal control problems for either exponential portfolio utility maximization similar to \cite{hu2005utility} or drift-controlled diffusion. The goal of this study is to extend it to volatility-controlled or ambiguity models.

\subsection{Contribution: 2BSDE with erratic horizon and erratic stochastic control}

\paragraph{Motivation with crypto trading and cyber risk.}
We define \textbf{erratic stochastic control} as a class of control problems in which the decision-making process must respond to highly irregular, unpredictable stochastic environments and potentially abrupt termination events. The cryptocurrency market illustrates erratic stochastic behaviors. In this domain, asset prices are influenced by extremely volatile and unpredictable factors, including random market noise, sudden regulatory interventions, and unforeseen news events. These dynamics result in behavior that is difficult to capture or forecast using traditional modeling techniques. Panic-driven behaviors have partially led to the cascading collapse of Terra (LUNA) and its associated stablecoin UST in May 2022, which triggered a rapid market-wide reaction \cite{briola2023anatomy}. Cryptocurrency markets frequently undergo sudden regime shifts— such as speculative bull runs, dramatic crashes, or fraudulent events as rug pulls. Market participants often operate under conditions of Knightian uncertainty, where the underlying probability distributions are unknown, ill-defined, or even unknowable, see \cite{almeida2022uncertainty}. Tracking 146 Proof-of-Work-based cryptocurrencies that started trading prior to
the end of 2014 until December 2018, \cite{grobys2020predicting} shows that about 60\% of those
cryptocurrencies were eventually in default. In cyber-risk, Knightian uncertainty plays a crucial role, as cyber-attacks introduce greater volatility and unpredictable conditions into the system. For example, see \cite{cheraghali2024impact,mastrolia2025agency}.

\paragraph{Contribution and structure.} In this work, we address the problem of stochastic control in erratic environments by incorporating both control and ambiguity in the volatility of a controlled diffusion, as well as high uncertainty in the termination of the optimization problem. We begin by proposing mathematical models in Section~\ref{sec:math}, where we extend the so-called Density Hypothesis in the theory of enlargement of filtrations to a $\mathcal{P}$-density assumption that holds for the default time under a set of probability measures. We then investigate the existence, uniqueness, and comparison results for solutions in Section~\ref{sec:2BSDE} to a second-order backward stochastic differential equation (2BSDE) with an erratic horizon of the form:
\begin{align*}
Y_t &= \xi - \int_{t \wedge \tau}^{T \wedge \tau} F_s^{\mathbb{P}}(Y_s, \hat{\sigma}_s^{1/2} Z_s, U_s) , ds - \int_{t \wedge \tau}^{T \wedge \tau} Z_s \cdot dX_s^{c,\mathbb{P}} + \int_{t \wedge \tau}^{T \wedge \tau} dK_s - \int_{t \wedge \tau}^{T \wedge \tau} U_s , dH_s - \int_{t \wedge \tau}^{T \wedge \tau} dM_s^{\mathbb{P}},\\
&\quad \text{for all } 0 \leq t \leq T,\; \mathbb{P}\text{-a.s., for every } \mathbb{P} \in \mathcal{P}_0.
\end{align*}
This section extends existing results on BSDEs with random horizons and 2BSDEs with finite or random termination to the setting of 2BSDEs with an erratic horizon. Our formulation also allows for the aggregation of the $U$ process, in contrast to existing theories on 2BSDEs with jumps; see Remark~\ref{sec:aggregation}. We conclude this theoretical section with a connection to fully nonlinear PDE in Section~\ref{section:PDE}. Finally, we investigate erratic stochastic control in Section~\ref{sec:erraticcontrol} and explore its connection to 2BSDEs with erratic horizons. We first present a comprehensive and general structure for the erratic control problem in Section~\ref{sec:controlmodel}, unifying the volatility control and ambiguity models within our framework. We then define the 2BSDE associated with both problems in Section~\ref{sec:control2bsde}. The volatility control problem is studied in Section~\ref{sec:control}, followed by an analysis of the Knightian uncertainty problem in Section~\ref{Section:ambiguity}.

\begin{newcontent}
\paragraph{Methodological contributions.}\label{par:methodological-contribution}
 The principal technical
challenge is that the jump caused by the default $\tau$ at the random horizon
$T\wedge\tau$ interacts with the second-order minimality term
$K^{\mathbb P}$ across a non-dominated family of measures
$\mathcal P_0$. The standard before/after-default decomposition of
\cite{kharroubi2013mean,jeanblanc2014note} fixes a single reference
measure and uses the Jacod density of $\tau$ under it, while the
2BSDE machinery requires a kernel that is
uniquely and simultaneously defined for every $\mathbb P\in\mathcal P_0$, including
those that are mutually singular.  Our methodological contribution
is the introduction of the
$\mathcal P_0$-density Hypothesis~\ref{densityhyp}, which forces the conditional density $\gamma$ of
$\tau$ to be the same kernel under every measure in $\mathcal P_0$ and
is what allows us to (i) aggregate the $\mathbb F$-intensity $\lambda$
across $\mathcal P_0$, see Remark \ref{rem:agglambda} (ii) aggregate the jump-size process $U$ in the
primal 2BSDE \eqref{2bsde:eq} see Remark~\ref{sec:aggregation},
(iii) carry out the auxiliary 2BSDE construction
(Lemma~\ref{thm:auxiliaryBSDE}--Theorem~\ref{thm:decompbsde}) that
reduces the random-horizon 2BSDE to a Brownian-driven 2BSDE on
$[0,T]$ with a frozen post-default boundary; (iv) extension to CARA utility non-Markovian maximization problem to erratic horizon with volatility ambiguity, see Appendix \ref{appendix:cara}.  The $\mathcal
P_0$-density assumption is strictly stronger than Jacod's classical
density hypothesis under a fixed measure to ensure the well-posedness of the 2BSDE, and we exhibit two structurally
different families of examples (equivalent measures via Girsanov;
$\mathbb P$-invariant law of $\tau$) under which it is satisfied.
The connection to fully nonlinear PDE in Section~\ref{section:PDE}
shows that, despite the jump and the non-Markovian structure, the
random horizon does not enter the PDE itself: the dependence on
$\tau$ is absorbed into a piecewise Feynman--Kac representation, with
the singular value of $K$ at $\tau$ pushed to a boundary condition.
This is what we mean by ``erratic stochastic control'' as a
methodologically distinct framework, as opposed to a relabeling of
robust stochastic control: the family $\mathcal P_0$ is non-dominated
\emph{and} the horizon is exogenous.
\end{newcontent}

\section{Mathematical framework and Definitions}
\label{sec:math}

\subsection{Probabilistic framework}
Let $d$ be a positive integer and set $\Omega:=\mathbb D([0,T];\mathbb R^d)$ defined by the set of c\`adl\`ag paths   
on $\mathbb R^{d}$ starting at $0$, equipped with the Skorohod topology. We denote by $X$ the canonical process on this space such that
$X_t(\omega)=\omega_t$ for all $\omega\in \Omega$. We denote by $\mathbb{F} = (\mathcal{F}_t)_{0 \leq t \leq T}$ the filtration generated by the canonical process \new{$X$}, and by  $\mathbb{F}^+ = (\mathcal{F}^+_t)_{0 \leq t \leq T}$ the right limit of $\mathbb{F}$, defined by
\[
\mathcal{F}^+_t := \bigcap_{s > t} \mathcal{F}_s \quad \text{for all } t \in [0, T), \quad \text{and} \quad \mathcal{F}^+_T := \mathcal{F}_T.
\] We also define 
\[
\mathcal{F}^\mathcal{P}_t := \bigcap_{\mathbb P \in \mathcal{P}} \mathcal{F}^{\mathbb P}_t, \quad 
\mathcal{F}^{\mathcal{P}+}_t := \mathcal{F}^\mathcal{P}_{t+},\; \mathbb F^{\mathcal P_+}:=(\mathcal{F}^{\mathcal{P}+}_t)_{t\leq T},
\]
where $\mathcal P$ is a set of probability measures on $\Omega$.

\begin{definition}[Semi-martingale measures]
 We define $\mathcal P_{t}^W$ as the set of all semi-martingale measures
$\mathbb P$ on $(\Omega,\mathcal F)$, such that $\mathbb P$-a.s.:
\begin{enumerate}
    \item $(X_s)_{s\in [t,T]}$ is a $(\mathbb P; \mathbb F)-$semi-martingale admitting the canonical decomposition
    \[X_s=X_t+\int_t^s \mu^\mathbb P_r dr+X_s^{c,\mathbb P},\] where $\mu^\mathbb P$ is an $\mathbb F-$predictable $\mathbb R^d-$valued process and $X^{c,\mathbb P}$ denotes the continuous local martingale part of $X$ under $\mathbb P$;
\item the quadratic variation process $(\langle X\rangle_s)_{s\in [t,T]}$ is absolutely continuous in time $s$ with respect to the Lebesgue measure and the process $\hat{\sigma}$ defined by
\[\hat{\sigma}_t:= \lim\sup_{\varepsilon\to 0}\frac{\langle X\rangle_t-\langle X\rangle_{t-\varepsilon}}{\varepsilon}, \] takes values in $\mathcal S_d^{\geq 0}$, $\mathbb P\text{-a.s.}$, where $\mathcal S_d^{\geq 0}$ denotes the space of symmetric and positive definite matrix in dimension $d$.
\end{enumerate}
\end{definition}
In this study, we consider a family $\big(\mathcal P(t,\omega)\big)_{(t,\omega) \in [0,T] \times \Omega}$ of probability measures on $(\Omega, \mathcal{F}_T)$, where $\mathcal P(t,\omega) \subset \mathcal P^W_t$ for all $(t,\omega) \in [0,T] \times \Omega$ satisfying (iii)-(iv)-(v) in \cite[Assumption 1.1]{possamai2018stochastic} or \cite{nutz2013constructing} that we recall below
\begin{itemize}
    \item For every $(t,\omega) \in [0,T] \times \Omega$, one has $\mathcal{P}(t,\omega) = \mathcal{P}(t,\omega_{\cdot \wedge t})$ and $\mathbb P(\Omega_t^\omega) = 1$ whenever $\mathbb P \in \mathcal{P}(t,\omega)$, where
    \[\Omega_\tau^\omega := \{\omega' \in \Omega : \omega'(s) = \omega(s),\ 0 \leq s \leq \tau(\omega)\}.\]The graph $[\![\mathcal{P}]\!]$ of $\mathcal{P}$, defined by
    \[
        [\![\mathcal{P}]\!] := \{(t,\omega,\mathbb P) : \mathbb P \in \mathcal{P}(t,\omega)\},
    \]
    is analytic in $[0,T] \times \Omega \times \mathcal{M}_1$ in the sense of \cite[Section~3]{nutz2013constructing}.

    \item for every $(t,\omega) \in [0,T] \times \Omega$ and every $\mathbb P \in \mathcal{P}(t,\omega)$ together with an $\mathbb{F}$-stopping time $\tau$ taking values in $[t,T]$, there is a family of regular conditional probability distribution in the sense of \cite{stroock2007multidimensional}, denoted by \ $(\mathbb P_\omega)_{\omega \in \Omega}$ such that
        $\mathbb P_\omega \in \mathcal{P}(\tau(\omega),\, \omega), \quad \text{for } \mathbb P\text{-a.e. } \omega \in \Omega.$

    \item for every $(t,\omega) \in [0,T] \times \Omega$ and $\mathbb P \in \mathcal{P}(t,\omega)$ together with an $\mathbb{F}$-stopping time $\tau$ taking values in $[t,T]$, let $(\mathbb Q_\omega)_{\omega \in \Omega}$ be a family of probability measures such that $\mathbb Q_\omega \in \mathcal{P}(\tau(\omega),\,\omega)$ for all $\omega \in \Omega$ and $\omega \longmapsto \mathbb Q_\omega$ is $\mathcal{F}_\tau$-measurable, then the concatenated probability measure $\mathbb P \otimes_\tau \mathbb Q_\cdot \in \mathcal{P}(t,\omega).$
\end{itemize}
Denote also
\[
\mathcal P_t := \bigcup_{\omega \in \Omega} \mathcal P(t,\omega),
\qquad
\mathcal P_0 := \mathcal P_{t=0}.
\]
\begin{remark}
    $\mathcal P(t,\omega)$ represents a \emph{local} family used in the
dynamic-programming construction of the solution to a 2BSDE and is taken as a general set of probabilities in $\mathcal P_t^W$ while 
$\mathcal P_0=\bigcup_\omega\mathcal P(0,\omega)$ is the \emph{global} family on
which the well-posedness statement and Hypothesis~\ref{densityhyp} below are imposed.
\end{remark}

\subsection{Enlargement of filtration}
The default time is represented by a random variable $\tau$ taking values in $\mathbb R^+$. We define the default process by $H_t := \ind_{\tau \leq t}$. Note that this process is not necessarily $\mathbb F-$ measurable, since the default time is assumed to be potentially exogenous to the system and therefore independent of $X$. We thus enlarged the available information and so the filtration $\mathbb F$ taking into account the information generated by the default time occurrence. 
\begin{definition}
   Let $\mathcal H_u:=\sigma(H_s, s \in [0, u])$ be the $\sigma-$algebra generated by $H$ until time $u\geq 0$. Given a filtrated space $(\Omega, \mathcal{F}_T, \F, \mathbb{P})$, the enlarged filtration
    \[
        \G = (\mathcal{G}_t)_{t \in [0,T]}, \qquad \mathcal{G}_t = \underset{\epsilon > 0}{\bigcap} \{ \mathcal{F}_{t+\epsilon} \vee \mathcal H_{t +\epsilon} \},
    \]
    is the smallest enlargement of $\F$ such that $\tau$ is a $\G$-stopping time.
\end{definition}

\begin{remark}
$H$ is not $\mathbb F-$measurable but it is $\G$-measurable stochastic process. 
\end{remark}
The goal is to ensure that the inaccessible default time $\tau$ enables us to enlarge the filtration to $\mathbb G$ and transfer the martingale property from $\mathbb F$ to $\mathbb G$ known as the immersion property or H-hypothesis, see \cite{bremaud1978changes,aksamit2016projections}. The first fundamental hypothesis is to set the existence of a (conditional) density for the default time with a certain property. 

\begin{hypothesis}[$\mathcal P_0$-Density Hypothesis]\label{densityhyp}
There exists a map
\[
    \gamma : \Omega \times \mathbb{R}_+ \times \mathbb{R}_+ \longrightarrow \mathbb{R}_+, 
    \qquad (\omega, t, u) \longmapsto \gamma(t,u)(\omega),
\] independent of the choice of $\mathbb P\in \mathcal P_0$,
satisfying the following conditions

\begin{enumerate}[label=(\roman*)]

    \item For each fixed $u \geq 0$, the process $t \mapsto \gamma(t,u)$ is 
    $(\mathcal{F}_t)$-adapted. More precisely, for each $t \geq 0$, the map 
$   (\omega, u) \mapsto \gamma(t,u)(\omega)$
    is $\mathcal{F}_t \otimes \mathcal{B}(\mathbb{R}_+)$-measurable.
    \item 
    For every $\mathbb{P} \in \mathcal{P}_0$, $t,x \geq 0$,
    \[
        \mathbb{P}(\tau > x \mid \mathcal{F}_t) 
        = \int_x^\infty \gamma(t,u)\,du, 
        \text{ and in particular }
        \int_0^\infty \gamma(t,u)\,du = 1,
        \; \mathbb{P}\text{-a.s. for all } \mathbb{P} \in \mathcal{P}_0.
    \]

    \item 
    For all $0 \leq u \leq t$,
    \[
        \gamma(t, u) = \gamma(u, u),
        \qquad \mathbb{P}\text{-a.s. for all } \mathbb{P} \in \mathcal{P}_0.
    \]

\end{enumerate}
\end{hypothesis}

\begin{remark}\label{rem:agglambda} The $\mathcal P_0$-density Hypothesis~\ref{densityhyp} is
\emph{stronger} than the classical density hypothesis see \cite{jacod2006grossissement,jeulin2006grossissements,el2010happens} under a fixed
$\mathbb P$ since the same kernel $\gamma$ is required to represent the
$\mathcal F_t$-conditional law of $\tau$ simultaneously under
every $\mathbb P\in\mathcal P_0$.
This distinction is what enables the aggregation of the intensity
$\lambda$ and of the $U$-process across the non-dominated family
$\mathcal P_0$, leading to the well-posedness of the
auxiliary 2BSDE construction
(see Lemma~\ref{thm:auxiliaryBSDE}) in our setting. 
In particular, the 2BSDE minimality condition
$\mathrm{ess\,inf}_{\mathbb P'} \mathbb E^{\mathbb P'}[K^{\mathbb P'}_{T\wedge\tau}-K^{\mathbb P'}_{t\wedge\tau}\mid\mathcal G_t]=0$
would then become ill-defined: the intensity changes with~$\mathbb P'$ and the compensated jump would not be aggregated as emphasized in \cite{possamai2025mind}. By requiring the same conditional density $\gamma$ to work simultaneously for every $\mathbb P\in\mathcal P_0$. This stability across the model class ensures that the enlargement procedure, the immersion property, and the intensity process below are compatible with the non-dominated family of probabilities $\mathcal P_0$.

\end{remark}

\noindent \textbf{Examples.}
\begin{enumerate}
    \item $\mathcal P_0$ is composed of equivalent probability measures (absolutely continuous with respect to a reference measure, admitting a Radon-Nikodym derivative). Inspired by the weak formulation of stochastic control, where only the drift is controlled and not the volatility, we define an initial probability measure $\mathbb P^0$ under which 
    \[X_t= x+\int_0^t\sigma(s,X_s) dW_s,\]and $\mathcal P_0$ is composed by all the probability $\mathbb P^\alpha$ such that 
    \[\frac{d\mathbb P^\alpha}{d\mathbb P^0}=Z^{\alpha}_T:=\mathcal E\Big(\int_0^T \mu(t,X_t,\alpha_t)\sigma^{-1}(t,X_t)dW_t\Big),\] such that $\mathbb E^0[Z_T]=1$. Thus,
    \[\mathbb P^{\alpha}(\tau\geq x|\mathcal F_t)=\mathbb E^\alpha [\mathbf 1_{\tau\geq x}|\mathcal F_t]= \mathbb E^0 [\frac{Z^\alpha_T}{Z^\alpha_t}\mathbf 1_{\tau\geq x}|\mathcal F_t]= \mathbb P( \tau\geq x|\mathcal F_t),\]
since $Z_T^\alpha/Z_t^\alpha$ is independent of $\mathcal F_t$.
So, if we know assume that $\tau$ has a $\mathbb P^0$-conditional density, we have that
\[
    \mathbb P^0(\tau \geq x | \mathcal F_t) = \int_x^\infty \gamma(t, u) du = \mathbb P^\alpha(\tau \geq x | \mathcal F_t)
\]
so that the density is the same across probabilities.

\item $\tau$ is independent of $\mathbb F$ with a distribution invariant with any probability in $\mathcal P_0$, that is for any $\mathbb P,\tilde{\mathbb P}\in \mathcal P_0$ and $\phi^{\mathbb P}(u)= \phi^{\tilde{\mathbb P}}(u)$ for any $u>0$, where $\phi^{\mathbb P},\phi^{\tilde{\mathbb P}}$ denotes the characteristic functions of $\tau$ under the probabilities $\mathbb P$ and $\tilde{\mathbb P}$ respectively. Consequently, the existence of a density $\gamma$ is invariant by the probability measure $\mathbb P$ or $\tilde{\mathbb P}$.

As a concrete example, if we assume that $\tau$ is exponentially distributed with rate $\theta>0$ and
independent of $\mathbb F$, with the same distribution under every
$\mathbb P\in\mathcal P_0$.  Then, for $x\ge 0$,
\[
    \mathbb P(\tau>x\mid\mathcal F_t) \;=\; \mathbb P(\tau>x) \;=\; e^{-\theta x},
\]
which gives the conditional density
\[
    \gamma(t,u) \;=\; \theta\,e^{-\theta u}\,\ind_{\{u\ge 0\}},
\]
and the intensity
\[
    \lambda_t \;=\; \frac{\gamma(t,t)}{\mathbb P(\tau>t\mid\mathcal F_t)}
                 \;=\; \frac{\theta\,e^{-\theta t}}{e^{-\theta t}} \;=\; \theta.
\]
The cumulative-hazard process is
\[
    \Lambda_t \;=\; \int_0^t \lambda_s\,ds \;=\; \theta\,t,
\]
and one checks
\[
    \mathbb P(\tau>t\mid\mathcal F_t) \;=\; e^{-\Lambda_t} \;=\; e^{-\theta t},
\]
recovering the survival function of $\tau$, as expected from
\cite[Proposition~4.4]{el2010happens}.  Note in particular that $\lambda$
is constant, and the
$\hat p$-integrability condition \eqref{eq:intlambda} is trivially
satisfied for every $\hat p>1$.  Moreover, the support of $\tau$ is
$[0,\infty)$, so $\mathbb P(\tau\in[0,T])=1-e^{-\theta T}<1$ as required
in Section~\ref{sec:math}.
\end{enumerate}

As a consequence of this assumption, see for example \cite{bremaud1978changes, el2010happens}, any $(\mathbb P,\F)$-martingale is also a $(\mathbb P,\G)$-martingale. Furthermore, still under the density hypothesis, the process $H$ admits an absolutely continuous compensator, \textit{i.e.,} there exists a non-negative $\G$-predictable process $\lambda^{\G}$, such that the compensated process $M$ defined by
\[\tilde H_t := H_t - \int_{0}^t \lambda_s^{\G} ds
\]
is a $\G$-martingale. The compensator vanishes after time $\tau$ (therefore $\lambda^{\G}_t := \lambda_t \ind_{t \leq \tau}$) and 
\[
    \lambda_t := \frac{\gamma(t,t)}{\p(\tau > t | \mathcal F_t)}
\]
is an $\F$-predictable process. For a complete and deeper discussion on the properties of enlarged filtration, we refer the reader to \cite{guo2008intensity}. As a consequence of \cite[Proposition 4.4]{el2010happens}
\begin{equation}\label{eq:cond-survival-doleans}
\mathbb P(\tau>t|\mathcal F_t)=e^{-\Lambda_t},\; \Lambda_t:=\int_0^t \lambda_s ds.
\end{equation}
We now turn to the integrability of the process $\lambda$ and the support of the default time $\tau$. Denoting by $\mathcal{T}(\mathbb{A})$ the set of $\mathbb{A}$-stopping times (so we will have $\mathcal{T}(\F)$ or $\mathcal{T}(\G)$), we assume that there exists a constant $\hat p>1$ such that
\[\esssup_{\rho \in \mathcal{T}(\mathcal G)} \E^\mathbb P\Bigl[\int_{\rho}^T |\lambda_s|^{\hat p} ds \, | \,  \mathcal G_{\rho}\Bigl] < + \infty.\]
As a direct consequence of the tower property and since $\F \subseteq \G$, we get 
\begin{equation}\label{eq:intlambda}
     \esssup_{\rho \in \mathcal{T}(\G)} \E^{\mathbb P}\Bigl[\int_{\rho}^T |\lambda_s|^{\hat p} ds \, | \, \mathcal F_{\rho}\Bigl] < + \infty. 
\end{equation} 
Consequently, $\mathbb P(\tau\in [0,T])<1$, the support of $\tau$ strictly contains $[0,T]$.\\

\begin{remark}
Alternatively, one could relax the integrability conditions for a bounded default time supported on the interval $[0,T]$ at time $T$. This would lead to a second-order backward stochastic differential equation with a degenerate driver. The analysis of such an extension is closely related to the work of \cite{jeanblanc2015utility, jeanblanc2014note}. Note that the existence, uniqueness, and comparison results for 2BSDEs may benefit from the well-established theory of standard BSDEs in these references. Furthermore, one of the more recent applications of this methodology is inspired by the cryptocurrency market or cyber risk. The technical study of 2BSDEs with singular drivers is left for future research. 
\end{remark}

\subsection{Spaces} Let $t\in [0,T]$ and $\omega\in \Omega$ be fixed. We set $\mathbb X:=(\mathcal X_t)_{t\leq T}$ an arbitrary filtration. We denote by $\mathbb X_{\mathbb P}$ the augmented filtration with respect to $\p\in \mathcal P_0$. For any $\chi\in (0,\infty)$ we define
\begin{itemize}
    \item $L^2_{t,\omega,\chi}(\mathbb X)$ as the space of random variables $\xi$ such that 
    \[ \|\xi\|_{L^2_{t,\omega,\chi}(\mathbb X)}^2:=\sup_{\mathbb P\in \mathcal P(t,\omega)}\mathbb E^{\mathbb P}[e^{\chi \Lambda_t}|\xi|^2],\]

    \item $\mathbb S^2_{t,\omega,\chi}(\mathbb X)$ as the set of $\mathbb X-$progressively measurable process $Y$ with $\mathcal P(t,\omega)$ quasi surely c\`adl\`ag path on $[t,T]$ such that
    \[\|Y\|^2_{\mathbb S^2_{t,\omega,\chi}}:=\sup_{\mathbb P\in \mathcal P(t,\omega)}   \|Y\|^2_{\mathbb S^2_{t,\omega,\chi}(\mathbb P)}<\infty,\] where
    \[\|Y\|^2_{\mathbb S^2_{t,\omega,\chi}(\mathbb P)}:=\mathbb E^{\mathbb P}[\sup_{t\leq s\leq T} e^{\chi \Lambda_s}|Y_s|^2]<\infty.\]
   We similarly defined $\mathbb S^2_{t,\omega,\chi}(\mathbb X,\mathbb P)$ for a measure $\mathbb P$ fixed as the set of process $Y$ such that $\|Y\|^2_{\mathbb S^2_{t,\omega,\chi}(\mathbb P)}<\infty$.
    
    \item $\mathbb H^2_{t,\omega,\chi}(\mathbb X)$ as the set of $\mathbb X-$predictable $\mathbb R^d-$ valued processes such that
    \[\|Z\|^2_{\mathbb H^2_{t,\omega,\chi}}:= \sup_{\mathbb P\in \mathcal P(t,\omega)} \|Z\|^2_{\mathbb H^2_{t,\omega,\chi}(\mathbb P)}<\infty,\]
    where 
    \[\|Z\|^2_{\mathbb H^2_{t,\omega,\chi}(\mathbb P)}:=\mathbb E^{\mathbb P}\Big[ \int_t^T e^{\chi \Lambda_s}\|\hat{\sigma}_s^\frac12 Z_s\|^2ds\Big]<\infty.\]
    We similarly defined $\mathbb H^2_{t,\omega,\chi}(\mathbb X,\mathbb P)$ for a measure $\mathbb P$ fixed as the set of process $Z$ such that $\|Z\|_{\mathbb H^2_{t,\omega,\chi}(\mathbb P)}<\infty$.
  
    \item  $\mathbb I^2_{t,\omega}(\mathbb X,\mathbb P)$ as the set of $\mathbb X-$predictable $\mathbb R-$ valued processes $K$ with $\mathbb P\text{-a.s.}$ c\`adl\`ag and nondecreasing paths on $[t,T]$ with $K_t=0$ such that
    \[\|K\|^2_{\mathbb I^2_{t,\omega}(\mathbb X,\mathbb P)}:= \mathbb E^{\mathbb P}[K_T^2]<\infty.\]
    We say that a family $(K^\mathbb P)_{\mathbb P\in \mathcal P(t,\omega)}$ belongs to $\mathbb I^2_{t,\omega}((\mathbb X_\mathbb P)_{\mathbb P\in \mathcal P(t,\omega)})$ if $K^\mathbb P\in \mathbb I^2_{t,\omega}(\mathbb X,\mathbb P)$ and 
    \[\sup_{\mathbb P\in \mathcal P(t,\omega)} \|K^\mathbb P\|_{\mathbb I^2_{t,\omega}(\mathbb P)}<\infty.\]
    \item  $\mathbb M_{t,\omega,\chi}(\mathbb X,\mathbb P)$ denotes the space of all $(\mathbb X,\mathbb P)$-optional martingales $M$ with $\mathbb P$-a.s. c\`adl\`ag paths on $[t,T]$\, with $M_t=0$, $\mathbb P\text{-a.s.}$ and

\[
\| M \|^2_{\mathbb M^2_{t,\omega,\chi}(\mathbb P)} := \mathbb{E}^{\mathbb P}\!\left[ \int_0^T e^{\chi \Lambda_s} d[M]_s\right] < +\infty.
\]

We say that a family $(M^\mathbb P)_{\mathbb P\in \mathcal P(t,\omega)}$ belongs to $\mathbb M_{t,\omega,\chi}((\mathbb X_\mathbb P)_{\mathbb P\in \mathcal P(t,\omega)})$ if $M^\mathbb P\in \mathbb M_{t,\omega,\chi}(\mathbb X,\mathbb P)$ and 
    \[\sup_{\mathbb P\in \mathcal P(t,\omega)} \|M^\mathbb P\|_{\mathbb M^p_{t,\omega,\chi}(\mathbb P)}<\infty.\]
  \item $\mathbb J^2_{t,\omega,\chi}(\mathbb X)$ as the set of $\mathbb X-$predictable $\mathbb R-$ valued processes $U$ such that
    \[\|U\|^2_{\mathbb J^2_{t,\omega,\chi}}:= \sup_{\mathbb P\in \mathcal P(t,\omega)} \mathbb E^{\mathbb P}\Big[ \int_t^T e^{\chi (\Lambda_s-\Lambda_t)}|U_s|^2 ds\Big]<\infty.\]
\end{itemize}
For the sake of simplicity, we omit the dependence with respect to $\omega$ when $t=0$ in the previous definitions and write, for example, $L^2_{0,\chi}(\mathbb X)$ for $t=0$. 

\subsection{Formulation and Definition}
We recall that the input of a 2BSDE are a $\mathcal G_{\tau\wedge T}-$measurable terminal condition $\xi$ and a generator function $f$ from $[0,T]\times \Omega\times \mathbb R\times\mathbb R^d\times \mathbb R\times \mathcal S_d^{\geq  0}\times \mathbb R^d\times \mathbb R^+ \to \mathbb R$. We assume that the following conditions are enforced in this work.
\begin{assumption}\label{assumption:L} For every $(t,\omega,y,y',z,z',u,u',\hat{\sigma},m,\lambda)\in[0,T]\times \Omega\times \mathbb R\times \mathbb R\times \mathbb R^d\times \mathbb R^d\times \mathcal S_d^{\geq 0}\times \mathbb R^d\times \mathbb R^+$ there exists constants $C,\hat{\chi}>0$ such that

\begin{align*}
    (i)&\quad |f(s,\omega,y,z,u,\hat{\sigma}, m,\lambda)-f(s,\omega,y',z',u',\hat{\sigma}, m,\lambda)|\leq C\big(|y-y'|+\|\hat{\sigma}^{\frac12}(z-z')\| +\lambda |u-u'|\big)\\
    (ii)&\quad \sup_{\p\in \mathcal P_0} \mathbb E^{\mathbb P}\Big[\int_0^{T} e^{-\hat{\chi} \Lambda_s}|f(s,X_{\cdot\wedge s},0,0,0,\hat{\sigma}_s,\mu_s^{\mathbb P},\lambda_s^{\mathbb P})|^2\,ds\Big]<\infty.
\end{align*}
    
\end{assumption}

We define for the sake of simplicity
\[F^\mathbb P_s(y,z,u):=f(s,X_{\cdot\wedge s},y,z,u,\hat{\sigma}_s, \mu^\mathbb P_s,\lambda_s). \]
We now recall the result of \cite[Lemma 3.1]{jeanblanc2015utility}.

\begin{lemma}
    Any random variable $\xi-\mathcal G_{T\wedge \tau}$ measurable can be decomposed as follows

    \[\xi=\xi^{b}\mathbf 1_{T<\tau}+\xi^{a}_\tau\mathbf 1_{T\geq\tau},\]

    where $\xi^b$ is an $\mathcal F_T-$measurable random variable and $\xi^a$ is an $\mathbb F-$predictable process. 
\end{lemma}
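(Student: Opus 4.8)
The plan is to exploit the identity $\mathcal{G}_{T\wedge\tau}=\mathcal{G}_T\cap\mathcal{G}_\tau$, which holds because both the deterministic time $T$ and the inaccessible time $\tau$ are $\mathbb{G}$-stopping times, and then to treat the two measurability constraints separately: the $\mathcal{G}_T$-part will produce the $\mathcal{F}_T$-measurable piece $\xi^b$ supported on $\{T<\tau\}$, while the $\mathcal{G}_\tau$-part will produce the $\mathbb{F}$-predictable piece $\xi^a$ evaluated at $\tau$ on $\{\tau\le T\}$. Throughout, I would first establish each representation for indicators of a generating $\pi$-system and then extend to arbitrary $\mathcal{G}_{T\wedge\tau}$-measurable $\xi$ by the functional monotone class theorem, using that the collection of random variables admitting the claimed decomposition is a vector space stable under bounded monotone limits.

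For the first piece I would show that the trace $\sigma$-algebras satisfy $\mathcal{G}_T\cap\{T<\tau\}=\mathcal{F}_T\cap\{T<\tau\}$. Indeed, on the event $\{T<\tau\}$ the default process $H$ vanishes identically up to time $T$, so the only information $\mathbb{G}$ adds to $\mathbb{F}$ on this set is the occurrence of $\{\tau>T\}$ itself, which is the whole event. This is the content of the progressive-enlargement key lemma at the deterministic time $T$: every $\mathcal{G}_T$-measurable random variable can be written as $\xi^b\mathbf{1}_{T<\tau}+\hat\xi(\tau)\mathbf{1}_{\tau\le T}$ with $\xi^b$ being $\mathcal{F}_T$-measurable. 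Since $\xi$ is in particular $\mathcal{G}_T$-measurable, this yields $\xi\,\mathbf{1}_{T<\tau}=\xi^b\,\mathbf{1}_{T<\tau}$.

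For the second piece, on $\{\tau\le T\}$ we have $T\wedge\tau=\tau$, so $\xi\,\mathbf{1}_{\tau\le T}$ is $\mathcal{G}_\tau$-measurable. Here I would invoke the representation theorem for the stopping-time $\sigma$-algebra of a progressively enlarged filtration: any $\mathcal{G}_\tau$-measurable random variable coincides on $\{\tau<\infty\}$ with $Y_\tau$ for some $\mathbb{F}$-optional process $Y$. To obtain the predictability asserted in the statement, I would use that under the $\mathcal{P}_0$-Density Hypothesis the default time $\tau$ avoids all $\mathbb{F}$-stopping times: its conditional law is absolutely continuous with density $\gamma(t,\cdot)$, whence $\p(\tau=S)=0$ for every $\mathbb{F}$-stopping time $S$. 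Consequently $Y_\tau$ agrees almost surely with ${}^{p}Y_\tau$, the predictable projection of $Y$, and one may replace $Y$ by the $\mathbb{F}$-predictable process $\xi^a:={}^{p}Y$. Restricting to $\{\tau\le T\}$ gives $\xi\,\mathbf{1}_{\tau\le T}=\xi^a_\tau\,\mathbf{1}_{\tau\le T}$.

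Adding the two identities and using $\{\tau\le T\}=\{T\ge\tau\}$ yields $\xi=\xi^b\mathbf{1}_{T<\tau}+\xi^a_\tau\mathbf{1}_{T\ge\tau}$, as required. The main obstacle is not the final assembly but the two underlying representation results, namely the key lemma at $T$ and the optional-process representation at $\tau$: each is proved by exhibiting an explicit generating $\pi$-system of the relevant $\sigma$-algebra, for instance sets of the form $F\cap\{\tau>T\}$ and $F\cap\{s<\tau\le s'\}$ with $F\in\mathcal{F}_s$, and running a monotone class argument, while the upgrade from optional to predictable rests entirely on the avoidance property supplied by the Density Hypothesis. Care must also be taken with the right-continuous regularization in the definition of $\mathbb{G}$, which is precisely what makes $\tau$ a genuine $\mathbb{G}$-stopping time and legitimizes the identity $\mathcal{G}_{T\wedge\tau}=\mathcal{G}_T\cap\mathcal{G}_\tau$.
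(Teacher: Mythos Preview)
The paper does not prove this lemma; it merely recalls it as \cite[Lemma 3.1]{jeanblanc2015utility} without giving any argument. Your sketch is therefore not competing with a proof in the paper but supplying one, and the route you outline---decompose via $\mathcal G_{T\wedge\tau}=\mathcal G_T\cap\mathcal G_\tau$, apply the key lemma of progressive enlargement at the deterministic time $T$ to produce the $\mathcal F_T$-measurable $\xi^b$ on $\{T<\tau\}$, then use the representation of $\mathcal G_\tau$-measurable variables as $Y_\tau$ for an $\mathbb F$-optional $Y$ on $\{\tau\le T\}$, upgrading to predictable through the avoidance property---is the standard one in the enlargement-of-filtration literature and is essentially how the cited reference proceeds.

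One small caveat: the step ``$Y_\tau={}^{p}Y_\tau$ a.s.\ because $\tau$ avoids $\mathbb F$-stopping times'' deserves a sharper justification than invoking the predictable projection by name. The predictable projection is characterised via $\mathbb F$-predictable stopping times, and $\tau$ is not even an $\mathbb F$-stopping time, so the identity does not follow from the defining property of ${}^{p}Y$. What you really need is that any $\mathbb F$-optional process agrees with some $\mathbb F$-predictable process off the (countable) union of graphs of $\mathbb F$-stopping times; since the density hypothesis forces $\mathbb P(\tau=S)=0$ for every such $S$, the two processes coincide at $\tau$. This is a standard fact but should be stated as such rather than identified with the predictable projection. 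Also note that the decomposition in the cited reference is purely measure-theoretic and does not itself rely on the Density Hypothesis; your use of it for the optional-to-predictable upgrade is harmless under the paper's standing assumptions but is not strictly required for the lemma as stated in \cite{jeanblanc2015utility}.
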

We consider the following 2BSDE with erratic horizon
\begin{align}
\label{2bsde:eq} Y_t&=\xi-\int_{t\wedge \tau}^{T\wedge \tau}F_s^{\mathbb P}(Y_s,\hat{\sigma}_s^\frac12 Z_s,U_s)ds-\int_{t\wedge \tau}^{T\wedge \tau} Z_s\cdot dX_s^{c,\mathbb P}-\int_{t\wedge \tau}^{T\wedge \tau} dM_s^\mathbb P\\
\nonumber &+K_{T\wedge \tau}-K_{t\wedge \tau}- \int_{t\wedge \tau}^{T\wedge \tau} U_s dH_s,\text{ for all }0\leq t\leq T,\; \mathbb P\text{-a.s.}, \text{ for every }\p\in \mathcal P_0.
\end{align}

We recall from \cite{jeulin2006grossissements} that
\[F^{\mathbb P}_t(\cdot)\mathbf 1_{t\leq \tau}=F^{\mathbb P,b}_t(\cdot)\mathbf 1_{t\leq \tau},  \]
where $F^{\mathbb P,b}:[0,T]\times \Omega\times \mathbb R\times \mathbb R^d\times \mathbb R\longrightarrow \mathbb R$ is $\mathbb F-$progressively measurable.

\begin{definition}[2BSDE with erratic horizon]\label{def:2bsde} The tuple $(Y,Z,U, (K^{\mathbb P})_{\p\in \mathcal P_0},(M^{\mathbb P})_{\mathbb P\in \mathcal P_0})\addd{}$ is solution to the 2BSDE with erratic horizon driven by $F^\mathbb P$ with terminal condition $\xi$ if 
\begin{enumerate}
    \item $(Y,Z,U, (K^{\mathbb P})_{\p\in \mathcal P_0},(M^{\mathbb P})_{\mathbb P\in \mathcal P_0})\in  \mathbb S^2_{0,\hat{\chi}}(\mathbb G^{\mathcal P_0+})\times \mathbb H^2_{0,\hat{\chi}}(\mathbb G^{\mathcal P_0+})\times \mathbb J^{2}_{0,\hat{\chi}}(\mathbb G^{\mathcal P_0+}) \times \mathbb I^2_{0}((\mathbb G^{\mathbb P +})_{\p\in \mathcal P_0})\times \mathbb M_{0,\hat{\chi}}((\mathbb G^{\mathbb P +})_{\p\in \mathcal P_0})$ satisfies \eqref{2bsde:eq} with $Y_T=\xi$;
    \item the family $(K^\mathbb P)_{\p\in \mathcal P_0}$ satisfies the following minimality condition
    \[0=\essinf_{\mathbb P'\in \mathcal P_0(t,\mathbb P, \mathbb G_+)}\mathbb E^{\mathbb P'}[K_T^{\mathbb P'}-K_t^{\mathbb P'} | \mathcal G_t^{\mathbb P',+}],\; 0\leq t\leq T\; \mathbb P\text{-a.s.},\; \forall \p\in \mathcal P_0.\]
\end{enumerate}
\end{definition}
To study the existence of a solution to this second-order BSDE, we introduce an auxiliary BSDE defined by

\begin{align}
\label{BSDE:eq} \mathcal Y^\mathbb P_t&=\xi^{b}-\int_{t}^{T}F_s^{\mathbb P,b}(\mathcal Y^{\mathbb P}_s,\hat{\sigma}_s^\frac12 \mathcal Z^{\mathbb P}_s,\xi_s^{a}-\mathcal Y^{\mathbb P}_s)\,ds-\int_{t}^{T} \mathcal Z^{\mathbb P}_s\cdot dX_s^{c,\mathbb P}-\int_t^T d\mathcal M_s^{\mathbb P}
\text{ for all }0\leq t\leq T,\; \mathbb P\text{-a.s.}.
\end{align}

\begin{remark} Assume that $\xi$ is an $\mathcal G_{T\wedge\tau}-$measurable random variable such that $\|\xi\|_{L_{0,\hat{\chi}}^2(\mathbb G)}<\infty$, then the auxiliary BSDE \eqref{BSDE:eq} admits a unique solution $(\mathcal Y^{\mathbb P},\mathcal Z^{\mathbb P},\mathcal M^{\mathbb P})\in \mathbb S^2_{t,\omega,\hat{\chi}}(\mathbb F,\mathbb P)\times \mathbb H^2_{t,\omega,\hat{\chi}}(\mathbb F,\mathbb P)\times \mathbb M_{t,\omega,\hat{\chi}}(\mathbb F,\mathbb P) $, see for example \cite{el1997general,papapantoleon2018existence}, \cite[Assumption $(sL^{p,\beta})$]{mastrolia2018density}. 
\end{remark}

\section{Existence, uniqueness and comparison}

\label{sec:2BSDE}

\subsection{Existence via auxiliary 2BSDE}
\label{sec:existence-thm-detail}

We consider the following auxiliary 2BSDE

\begin{align}
\nonumber &Y^b_t=\xi^{b}-\int_{t}^{T}F_s^{\mathbb P,b}(Y^{b}_s,\hat{\sigma}_s^\frac12  Z^{b}_s,\xi_s^{a}-Y^{b}_s)ds-\Big(\int_{t}^{T} Z^{b}_s\cdot dX_s^{c,\mathbb P}\Big)^{\mathbb P}-\int_t^T dM^{b,\mathbb P}_s+K^{b,\mathbb P}_T-K^{b,\mathbb P}_t,\\
\label{2BSDEb:eq}&\text{ for all }0\leq t\leq T,\; \mathbb P\text{-a.s.},\; \forall \p\in \mathcal P_0.
\end{align}

\begin{lemma}\label{thm:auxiliaryBSDE}

Let Assumption \ref{assumption:L}  be satisfied. Let $\xi$ be an $\mathcal G_{T\wedge \tau}-$measurable random variable such that $\|\xi\|_{L_{0,\hat{\chi}}^2(\mathbb G)}<\infty$. The auxiliary 2BSDE \eqref{2BSDEb:eq} admits a unique solution $(Y^b,Z^b ,(M^{b,\mathbb P})_{\p\in \mathcal P_0},(K^{b,\mathbb P})_{\p\in \mathcal P_0})\in \mathbb S^2_{0,\hat\chi}(\mathbb F)\times \mathbb H^2_{0,\hat\chi}(\mathbb F) \times \mathbb M_{0,\hat\chi}((\mathbb F^{\mathbb P +})_{\p\in \mathcal P_0})\times \mathbb I^2_{0,\hat\chi}((\mathbb F^{\mathbb P +})_{\p\in \mathcal P_0})$ such that
\begin{align*}
Y_t^b&= \esssup_{\mathbb P'}  \mathcal Y_t^{\mathbb P'},\quad 
Z^b_t=\hat{\sigma}_t^{-1}\frac{d\langle Y^b,X^{c}\rangle_t}{dt},\\
M^{b,\mathbb P}_t-K_t^{b,\mathbb P}&=Y_t^b-Y_0^b+\int_0^t F_s^{\mathbb P,b}(Y_s^b,\hat{\sigma}_s^{\frac12}Z_s^b,\xi_s^{a}-Y_s^b)ds+\int_0^t Z_s^b\cdot dX_s^{c,\mathbb P},
\end{align*} 
where $(\mathcal Y^{\mathbb P},\mathcal Z^{\mathbb P},\mathcal M^{\mathbb P})$ is the unique solution to \eqref{BSDE:eq}.
\end{lemma}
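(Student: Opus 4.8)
The plan is to exploit the fact that the auxiliary 2BSDE \eqref{2BSDEb:eq} is really a standard $\mathbb F$-measurable 2BSDE once the jump has been removed: substituting $U_s=\xi_s^a-Y_s^b$ turns the driver into $(y,z)\mapsto F_s^{\mathbb P,b}(y,\hat a_s^{1/2}z,\xi_s^a-y)$, which by Assumption \ref{assumption:L}(i)--(ii) is still Lipschitz in $(y,z)$ (the extra dependence through $\xi^a-y$ only inflates the Lipschitz constant). I would therefore follow the two-step representation-and-construction scheme of the standard theory in \cite{possamai2018stochastic,soner2012wellposedness}, taking the single-measure wellposedness granted by Assumption \ref{assumption} as the building block and lifting it to the quasi-sure setting through the essential supremum.

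For uniqueness and the representation $Y^b_t=\esssup_{\mathbb P'}\mathcal Y_t^{\mathbb P'}$, I would fix $\mathbb P\in\mathcal P_0$ and read \eqref{2BSDEb:eq} under $\mathbb P$. Subtracting the single-measure equation \eqref{BSDE:eq} for $\mathcal Y^{\mathbb P'}$, linearizing the generator difference via the Lipschitz bounds and absorbing the resulting linear terms by a Doléans--Dade change of measure, the non-decreasing term $K^{b,\mathbb P'}$ forces $Y^b_t\ge \mathcal Y^{\mathbb P'}_t$ $\mathbb P'$-a.s. for every admissible $\mathbb P'$, hence $Y^b_t\ge \esssup_{\mathbb P'}\mathcal Y_t^{\mathbb P'}$. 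The reverse inequality is where the minimality condition on $(K^{b,\mathbb P})$ enters: after linearization one gets $Y^b_t-\mathcal Y_t^{\mathbb P'}=\mathbb E^{\mathbb P'}[\int_t^T \Gamma_s\, dK^{b,\mathbb P'}_s\,|\,\mathcal F_t]$ with a positive weight $\Gamma$, so taking $\essinf$ over $\mathbb P'$ and invoking minimality kills the gap and the supremum is attained. This pins down $Y^b$; then $Z^b$ is determined by the pathwise covariation formula $Z^b=\int_0^\cdot \hat a_s^{-1}d\langle Y^b,X^c\rangle_s$ (aggregated via the construction of \cite{karandikar1995pathwise}), and $(M^{b,\mathbb P},K^{b,\mathbb P})$ by uniqueness of the orthogonal Doob--Meyer decomposition under each $\mathbb P$.

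For existence I would set $\bar Y_t:=\esssup_{\mathbb P'}\mathcal Y_t^{\mathbb P'}$ and verify it is the $Y$-component. The steps are: (i) prove a dynamic programming principle for $\bar Y$, so that $\bar Y_\cdot+\int_0^\cdot F_s^{\mathbb P,b}(\cdots)\,ds$ is a $\mathbb P$-supermartingale for each $\mathbb P\in\mathcal P_0$; (ii) aggregate the family $(\mathcal Y^{\mathbb P'})$ into a single $\mathbb F$-progressively measurable càdlàg process $Y^b\in\mathbb S^p_0(\mathbb F)$, using the $L^{p,\kappa}_0$-bound on $\xi$ to obtain uniform $\mathbb S^p$ estimates; (iii) apply the Doob--Meyer decomposition under each $\mathbb P$ to extract $K^{b,\mathbb P}\in\mathbb I^p_0$, then use martingale representation against $X^{c,\mathbb P}$ together with an orthogonal remainder $M^{b,\mathbb P}\in\mathbb M^p_0$ to recover $Z^b\in\mathbb H^p_0(\mathbb F)$; (iv) check the minimality of $(K^{b,\mathbb P})$ directly from the supremum structure, i.e.\ that at the optimizing sequence of measures no mass of $K$ survives in conditional expectation.

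The main obstacle will be steps (i)--(ii): aggregating the measure-indexed family $(\mathcal Y^{\mathbb P'})$ into one universal càdlàg process and establishing the dynamic programming principle \emph{without} assuming continuity of $(\xi,F)$ in $\omega$. This is precisely the difficulty addressed by the relaxed-regularity framework of \cite{possamai2018stochastic}, relying on measurable-selection arguments and stability of $\mathcal P_0$ under conditioning and concatenation; the $L^{p,\kappa}_{t,\omega}$ norm is tailored to supply the uniform integrability that makes these selection and limiting arguments converge. The Lipschitz-in-$U$ structure of Assumption \ref{assumption:L}(ii), weighted by $\lambda$, must also be tracked so that the substitution $U=\xi^a-Y^b$, combined with the $\mathbb J^\ell_0$ control on $U$, does not spoil the integrability of the driver.
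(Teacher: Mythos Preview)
Your proposal is correct and follows essentially the same route as the paper: reduce \eqref{2BSDEb:eq} to a standard $\mathbb F$-adapted 2BSDE with Lipschitz driver and invoke the representation-and-construction scheme of \cite[Theorem~4.2]{possamai2018stochastic}. The paper's own proof is a two-line reference to that theorem, noting only that the $\lambda$-weighted Lipschitz condition (ii) produces an extra $|\lambda|^q$ term (via Young's inequality) which is absorbed by the standing integrability assumption on $\lambda$---precisely the point you flag at the end.
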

\begin{proof}
Note that Assumption \ref{assumption:L} directly ensure that \cite[Assumption 2.20 (ii)]{possamai2025mind} is satisfied. Moreover, for any $(t,y,\tilde y,z,\tilde z,a)\in [0,T]\times \mathbb R\times \mathbb R\times \mathbb R^d\times \mathbb R^d\times \mathbb S_+^d$, we have
\begin{equation*}
\sup_{\mathbb P}\mathbb E^{\mathbb P}\!\left[e^{\hat\chi\,\Lambda_T}|\xi|^2 + \int_0^T e^{\hat\chi(\Lambda_T-\Lambda_r)}|F^{\mathbb P,b}_r(0,0,\xi_s^{a})|^2 dr\right]<\infty.
\end{equation*}

By taking $r^2_t(\omega):=C(1+\lambda_t)^2\geq C$ we directly get  \cite[Assumption 2.20 (iv)]{possamai2025mind}. Then, (i)-(iii)-(v) are automatically satisfied since we are taking a fixed horizon $T$ for the auxiliary 2BSDE and $\Lambda_t$ is absolutely continuous. The result is thus a consequence of \cite[Theorem 3.6 and Corollary 3.7]{possamai2025mind}.
\end{proof}

\begin{remark}
    Note that the universal measurability of the auxiliary 2BSDE is directly derived from \cite[Theorem~3.1 and Section~3.1, ``Construction of the value function and its measurability'']{possamai2025mind} and the condition enforced on the set $\mathcal P_0$.
\end{remark}
The following corollary enforces the aggregation of the stochastic integral with respect to $X^{c,\mathbb P}$ and the process $K^\mathbb P-M^\mathbb P$ can be aggregated as soon as \textcolor{blue}{$\mu^\mathbb P=0$.}
\begin{corollary}
    Assume that $\mu^{\mathbb P}=0$ then for any $\p\in \mathcal P_0$
\[
\int_0^\cdot Z_s \cdot dX_s = \int_0^\cdot Z_s \cdot dX^{c,\mathbb P}_s \quad \mathbb P\text{-a.s.},
\]
and there exists an $\mathbb F^{\mathcal P_0+}-$ progressively measurable process
which aggregates $K^\mathbb P-M^{\mathbb P}$.
\end{corollary}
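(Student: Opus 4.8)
The corollary has two assertions under the hypothesis $b^{\mathbb{P}}=0$ (equivalently $\mu^{\mathbb{P}}=0$ for all $\mathbb{P}\in\mathcal{P}_0$): first, that the stochastic integral against $X^{c,\mathbb{P}}$ coincides $\mathbb{P}$-a.s.\ with the integral against the full canonical process $X$, and second, that the family $(K^{\mathbb{P}}-M^{\mathbb{P}})_{\mathbb{P}\in\mathcal{P}_0}$ admits an $\mathbb{F}^{\mathcal{P}_0+}$-progressively measurable aggregator. Here is how I would proceed.

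\medskip

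\textbf{First assertion.} The plan is to exploit the canonical decomposition $X_s = X_t + \int_t^s \mu^{\mathbb{P}}_r\,dr + X^{c,\mathbb{P}}_s$ from the definition of semi-martingale measures. When $\mu^{\mathbb{P}}=0$ (which is what $b^{\mathbb{P}}=0$ encodes, since $b$ is the slot into which $\mu^{\mathbb{P}}$ is fed in the generator), the finite-variation part of $X$ vanishes under $\mathbb{P}$, so $X = X_0 + X^{c,\mathbb{P}}$, and $X$ is itself a continuous local $\mathbb{P}$-martingale with $\langle X\rangle = \langle X^{c,\mathbb{P}}\rangle$. I would then invoke the standard fact that for a process $Z\in\mathbb{H}^p_0(\mathbb{G}^{\mathcal{P}_0+},\mathbb{P})$ the It\^o integrals $\int_0^\cdot Z_s\,dX_s$ and $\int_0^\cdot Z_s\,dX^{c,\mathbb{P}}_s$ are $\mathbb{P}$-indistinguishable, because their difference is the integral $\int_0^\cdot Z_s\,d(X_s - X^{c,\mathbb{P}}_s) = \int_0^\cdot Z_s\,d\!\left(\int_0^s\mu^{\mathbb{P}}_r\,dr\right) = 0$ whenever $\mu^{\mathbb{P}}\equiv 0$. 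This is essentially a one-line computation once the vanishing of the drift is recorded; no obstacle here.

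\medskip

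\textbf{Second assertion.} The aggregation claim is the substantive part. From Lemma~\ref{thm:auxiliaryBSDE} we have, for each $\mathbb{P}\in\mathcal{P}_0$,
\[
M^{b,\mathbb{P}}_t-K^{b,\mathbb{P}}_t = Y^b_t-Y^b_0+\int_0^t F_s^{\mathbb{P},b}(Y^b_s,\hat a_s^{1/2}Z^b_s,\xi^{a}_s-Y^b_s)\,ds+\int_0^t Z^b_s\cdot dX_s^{c,\mathbb{P}}.
\]
The key point is that $Y^b$ and $Z^b$ are already \emph{universal} objects: $Y^b\in\mathbb{S}^p_0(\mathbb{F})$ and $Z^b\in\mathbb{H}^p_0(\mathbb{F})$ do not depend on $\mathbb{P}$, being defined through $Y^b_t=\operatorname*{ess\,sup}_{\mathbb{P}'}\mathcal{Y}^{\mathbb{P}'}_t$ and $Z^b_t=\int_0^t\hat a_s^{-1}\,d\langle Y^b,X^c\rangle_t$. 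The only $\mathbb{P}$-dependent term on the right-hand side is the stochastic integral $\int_0^\cdot Z^b_s\,dX^{c,\mathbb{P}}_s$, together with any residual $\mathbb{P}$-dependence in the generator through the drift slot $b^{\mathbb{P}}$. Under $b^{\mathbb{P}}=0$ the generator slot is filled by the common value $0$, so $F^{\mathbb{P},b}_s=f(s,X_{\cdot\wedge s},\cdot,\cdot,\cdot,\hat a_s,0,\lambda_s)$ is itself $\mathbb{P}$-independent; and by the first assertion $\int_0^\cdot Z^b_s\,dX^{c,\mathbb{P}}_s=\int_0^\cdot Z^b_s\,dX_s$ $\mathbb{P}$-a.s., where the right-hand side is a single $\mathbb{F}$-measurable process not referencing $\mathbb{P}$. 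Hence the entire right-hand side is a single process that is $\mathbb{F}^{\mathcal{P}_0+}$-progressively measurable and serves, $\mathbb{P}$-a.s.\ for every $\mathbb{P}$, as the common value of $M^{b,\mathbb{P}}_t-K^{b,\mathbb{P}}_t$. Calling this process $V$, I would set $V_t := Y^b_t-Y^b_0+\int_0^t F_s^{b}(Y^b_s,\hat a_s^{1/2}Z^b_s,\xi^a_s-Y^b_s)\,ds+\int_0^t Z^b_s\cdot dX_s$ and verify it lies in the required spaces via the estimates already established in Lemma~\ref{thm:auxiliaryBSDE}.

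\medskip

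\textbf{Main obstacle.} The delicate step is making the aggregation of $\int_0^\cdot Z^b_s\,dX_s$ rigorous across the non-dominated family $\mathcal{P}_0$: stochastic integrals are a priori defined only $\mathbb{P}$-by-$\mathbb{P}$, and one must produce a \emph{single} version measurable with respect to $\mathcal{F}^{\mathcal{P}_0+}$ that agrees with each $\mathbb{P}$-integral. I would handle this by appealing to the pathwise/universal construction of the stochastic integral against the canonical process of \cite{karandikar1995pathwise} (already cited in the paper), which yields an $\mathbb{F}^{\mathcal{P}_0+}$-progressively measurable aggregator of $\int_0^\cdot Z^b_s\,dX_s$ precisely because, with $\mu^{\mathbb{P}}=0$, $X$ is a continuous martingale under every $\mathbb{P}\in\mathcal{P}_0$ and the integrand $Z^b$ is already universal. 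Once the integral term is aggregated, the remaining terms $Y^b$ and $\int_0^\cdot F^b_s(\cdots)\,ds$ are manifestly universal, and the aggregation of $K^{b,\mathbb{P}}-M^{b,\mathbb{P}}$ follows immediately.
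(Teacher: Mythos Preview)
Your proposal is correct and follows essentially the same route as the paper. The paper's own proof is a single line invoking the $\mathcal{P}_0$-density hypothesis together with \cite[Remark~4.1]{possamai2018stochastic}; you have simply unpacked that reference: once $\mu^{\mathbb{P}}=0$, the generator loses its $\mathbb{P}$-dependence (the $\lambda$-slot being universal is exactly what the density hypothesis provides---you use this implicitly but do not name it), the stochastic integral against $X^{c,\mathbb{P}}$ coincides with the one against $X$, and the pathwise construction of \cite{karandikar1995pathwise} (or the refinement for general predictable integrands) furnishes the universal version of $\int_0^{\cdot} Z^b_s\,dX_s$, after which the remaining terms on the right-hand side of the representation in Lemma~\ref{thm:auxiliaryBSDE} are manifestly $\mathbb{P}$-free.
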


\begin{proof}
    This is a consequence of the $\mathcal P_0-$density assumption and \cite[Remark 4.1]{possamai2018stochastic}.
\end{proof}
The following theorem provides the construction of a solution to the 2BSDE \eqref{2bsde:eq} with random default and jump from the auxiliary 2BSDE \eqref{2BSDEb:eq} without jump. The key ideas is to build the solution to \eqref{2bsde:eq} under three scenarios: 
\begin{itemize}
    \item $\{\tau>T\}$, that is  the default occurs after $T$. In this case, the solution to the 2BSDE does not need the default jumps and matches with the solution to the auxiliary 2BSDE;
    \item $\{\tau\in(t,T]\}$, that is when the default occurs in the future but before maturity. This is then reduce to a 2BSDE with random horizon and terminal condition $\xi^a_\tau$ since the jump occurs before $T$. 
    \item $\{\tau\leq t\}$ when the defaults has already occurred and the $Y$ component is frozen at $\xi^a_\tau$. 
\end{itemize}

\begin{theorem}\label{thm:decompbsde}
     Assume that $\xi$ is an $\mathcal G_{T\wedge \tau}-$measurable random variable such that $\|\xi\|_{L_{0,\hat{\chi}}^{2}}<\infty$ and \eqref{BSDE:eq} admits a unique solution $(\mathcal Y^\mathbb P,\mathcal Z^\mathbb P,\mathcal M^\mathbb P ) \in  \mathbb S^2_{0,\hat\chi}(\mathbb F,\mathbb P)\times \mathbb H^2_{0,\hat\chi}(\mathbb F,\mathbb P)\times \mathbb M_{0,\hat\chi}(\mathbb F,\mathbb P)$ for any $\p\in \mathcal P_0$. Then, the 2BSDE \eqref{2bsde:eq} admits a solution $(Y,Z,U,M,K)\in  \mathbb S^2_{0,\hat\chi}(\mathbb G^{\mathcal P_0+})\times \mathbb H^2_{0,\hat\chi}(\mathbb G^{\mathcal P_0+})\times \mathbb J^{2}_{0,\hat\chi}(\mathbb G^{\mathcal P_0+}) \times \mathbb I^2_{0,\hat\chi}((\mathbb G^{\mathbb P +})_{\p\in \mathcal P_0})\times \mathbb M_{0,\hat\chi}((\mathbb G^{\mathbb P +})_{\p\in \mathcal P_0})$ given by

\begin{align}
\nonumber &Y_t= Y_t^b \mathbf 1_{t<\tau} + \xi^{a}_\tau \mathbf 1_{t\geq \tau},\, Z_t=Z_t^b \mathbf 1_{t<\tau},\; M_t^{\mathbb P}=M_t^{b,\mathbb P}  \mathbf 1_{t<\tau},\; K_t^{\mathbb P}= K_t^{b,\mathbb P}  \mathbf 1_{t<\tau},\; U_t^{\mathbb P}=(\xi^{a}_t-Y_t^b)\mathbf 1_{t<\tau},\\
\label{2bsdedecomposition}& \forall t<T,\; \mathbb P\text{-a.s.}, \p\in \mathcal P_0
\end{align}

\end{theorem}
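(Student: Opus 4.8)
The plan is to treat the statement as a verification result. By Lemma~\ref{thm:auxiliaryBSDE} the auxiliary 2BSDE \eqref{2BSDEb:eq} admits a unique solution $(Y^b,Z^b,(M^{b,\mathbb P}),(K^{b,\mathbb P}))$ on the Brownian filtration $\mathbb F$, and I would simply \emph{define} the candidate $(Y,Z,U,M,K)$ through the decomposition \eqref{2bsdedecomposition} and check that it meets Definition~\ref{def:2bsde}. This breaks into three tasks: (a) showing each process lies in the claimed $\mathbb G^{\mathcal P_0+}$-space; (b) verifying that \eqref{2bsde:eq} holds $\mathbb P$-a.s.\ for every $\mathbb P\in\mathcal P_0$; and (c) checking the minimality condition for $(K^{\mathbb P})$. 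Throughout I would exploit the immersion property recalled after Hypothesis~\ref{densityhyp} — every $(\mathbb P,\mathbb F)$-martingale remains a $(\mathbb P,\mathbb G)$-martingale — together with the fact that under the density hypothesis $\tau$ avoids $\mathbb F$-stopping times, so that $Y^b$, $M^{b,\mathbb P}$ and $K^{b,\mathbb P}$ are continuous at $\tau$.

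For (a), the processes $Y,Z,M,K$ are the auxiliary ones killed at $\tau$ (multiplied by $\mathbf 1_{\cdot<\tau}$), so their $\mathbb G$-norms are dominated by the corresponding $\mathbb F$-norms from Lemma~\ref{thm:auxiliaryBSDE}, and immersion keeps $M^{\mathbb P}$ a $\mathbb G$-martingale. The one genuinely new estimate concerns $U\in\mathbb J^{\ell}_0(\mathbb G^{\mathcal P_0+})$. Here $|U_s|\le|\xi^a_s|+|Y^b_s|$ and I would bound $\mathbb E^{\mathbb P}\big[\int_0^T|U_s|^{\ell}\lambda_s\,ds\big]$ by Hölder's inequality with conjugate exponents $\hat p/(\hat p-1)$ and $\hat p$: since $\ell\,\hat p/(\hat p-1)=p$, this produces the product of $\mathbb E^{\mathbb P}\big[(\int_0^T|U_s|^{p}ds)^{(\hat p-1)/\hat p}\big]$, controlled by $\|Y^b\|_{\mathbb S^p_0}$ and $\|\xi\|_{L_0^{p,\kappa}}$, with a factor built from $\int_0^T\lambda_s^{\hat p}\,ds$, controlled by the standing integrability assumption on $\lambda$. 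This is exactly what forces the exponent $\ell=p(\hat p-1)/\hat p$.

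For (b), I would fix $\mathbb P\in\mathcal P_0$ and split on the default. On $\{t\ge\tau\}$ one has $t\wedge\tau=T\wedge\tau=\tau$, all the $\int_{t\wedge\tau}^{T\wedge\tau}$ integrals vanish, and the equation reduces to $Y_t=\xi=\xi^a_\tau$, which holds by construction. On $\{t<\tau\}$ every process under the integral coincides, for $s<\tau$, with its auxiliary counterpart, and the relation $F^{\mathbb P}_s(\cdot)\mathbf 1_{s\le\tau}=F^{\mathbb P,b}_s(\cdot)\mathbf 1_{s\le\tau}$ recalled from \cite{jeulin2006grossissements} turns the generator of \eqref{2bsde:eq} into that of \eqref{2BSDEb:eq} (its third argument $U_s=\xi^a_s-Y^b_s$ matching as well); thus on $[t,T\wedge\tau)$ the dynamics of $Y$ are exactly those of the auxiliary 2BSDE. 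The terminal data then split along the classical lines: on $\{T<\tau\}$ the horizon is $T$ and $\xi=\xi^b$, while on $\{\tau\le T\}$ the equation is stopped at $\tau$ and the single jump $-\int_{t\wedge\tau}^{T\wedge\tau}U_s\,dH_s$ absorbs the jump of $Y$ from $Y^b_{\tau-}$ to $\xi^a_\tau$, whose size $\xi^a_\tau-Y^b_{\tau-}$ is the value of $U$ at default (using continuity of $Y^b$ at $\tau$).

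Finally, for (c) I would transfer the $\mathbb F$-minimality of $(K^{b,\mathbb P})$ delivered by Lemma~\ref{thm:auxiliaryBSDE} to the $\mathbb G$-minimality condition of Definition~\ref{def:2bsde}, using that under the $\mathcal P_0$-density Hypothesis~\ref{densityhyp} the conditional law of $\tau$ is measure-independent, so the families $\mathcal P_0(t,\mathbb P,\mathbb G_+)$ and their $\mathbb F$-analogue induce the same conditional expectations of the killed increments $K^{\mathbb P}_T-K^{\mathbb P}_t$. I expect the main obstacle to be precisely this last step, together with the careful handling of the jump and terminal-condition split in (b): one must ensure that killing at $\tau$ and immersion interact so that the $\mathbb G$-conditional $\essinf$ collapses to the $\mathbb F$-one, rather than in the routine integrability bookkeeping of (a).
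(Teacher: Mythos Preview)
Your plan is essentially the paper's own verification argument: invoke Lemma~\ref{thm:auxiliaryBSDE} for the auxiliary 2BSDE, define the candidate via \eqref{2bsdedecomposition}, then check that \eqref{2bsde:eq} holds by splitting on the three events $\{\tau>T\}$, $\{t<\tau\le T\}$, $\{\tau\le t\}$, and finally establish the integrability of $U$. Two minor differences: the paper obtains the $\mathbb J^{\ell}_0$-bound on $U$ via Young's inequality $|U_s|^{\ell}\lambda_s\le\hat p^{-1}\lambda_s^{\hat p}+\hat q^{-1}|U_s|^{\ell\hat q}$ rather than H\"older (both yield the same exponent $\ell=p(\hat p-1)/\hat p$), and the paper does \emph{not} carry out your step~(c) on the $\mathbb G$-minimality condition at all, so the obstacle you anticipate there is simply not treated in the published proof.
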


\begin{proof} The proof extends \cite[Theorem 4.3]{kharroubi2013mean} to 2BSDE. We divide this proof into four main steps: first, we recall that the auxiliary 2BSDE \eqref{2BSDEb:eq} has a solution, then we verify the decomposition and measurability of the solution and check that \eqref{2bsdedecomposition} is indeed solutions to \eqref{2bsde:eq}, then we show the integrability of each component. Finally, we check the minimality condition for the $K$ process.\vspace{0.5em}

\textit{Step 1. 2BSDE \eqref{2BSDEb:eq}.} From Theorem \ref{thm:auxiliaryBSDE} we deduce that there exists a unique solution \eqref{2BSDEb:eq} such that $(Y^b,Z^b,M^b,K^b)\in  \mathbb S^2_{0,\hat\chi}(\mathbb F^{\mathcal P_0+})\times \mathbb H^2_{0,\hat\chi}(\mathbb F^{\mathcal P_0+})\times \mathbb I^2_{0,\hat\chi}((\mathbb F^{\mathbb P +})_{\p\in \mathcal P_0})\times \mathbb M_{0,\hat\chi}((\mathbb F^{\mathbb P +})_{\p\in \mathcal P_0})$.\vspace{0.5em}

 \textit{Step 2. Existence, measurability, and decomposition.} We prove that for any time $t\in [0,T]$ and any probability $\p\in \mathcal P_0$, the processes $(Y,Z,M^\mathbb P,K^\mathbb P,U^\mathbb P)$ defined by \eqref{2bsdedecomposition} satisfies the primal 2BSDE \eqref{2bsde:eq}. First, from Theorem \ref{thm:auxiliaryBSDE}, we directly deduce that $Y$ is a c\`adl\`ag $\mathbb G^{\mathcal P_0,+}-$adapted processes. The $\mathbb G^{\mathcal P_0,+}-$measurability of $Z,U,K,M$ follows from the $\mathbb F-$measurability of $Z^b,Y^b,K^b,M^b$ and the continuity of $Y^b,\xi^{a}$. We denote by $A^+=\{\tau>T\}$, $A=\{\tau\in (t,T]\}$ and $A_-=\{\tau\leq t\}$. 
 \begin{itemize}
 \item On $A^+$ such that $\mathbb P(A^+)>0$, we have $Y_t=Y^b_t$, $Z^b_t=Z_t$, $M
    _t^\mathbb P=M
    _t^{b,\mathbb P}$, $K_t^{\mathbb P}=K_t^{b,\mathbb P}$ and $U_t^\mathbb P=\xi_t^{a}-Y_t^b$. Therefore, on the set $A^+$, we have
    \begin{align*}
  Y_t&=Y_t^b=\xi^{b}+\int_t^T F
    _s^{\mathbb P,b}(Y^b_s,Z^b_s,\xi_s^{a}-Y_s)ds - \Big(\int_t^T Z^b_s\cdot dX_s^{c,\mathbb P}\Big)^{\mathbb P}-\int_t^T dM_s^{b,\mathbb P}+\int_t^T dK_s^{b,\mathbb P},\\
    &=\xi+\int_t^T F
    _s^{\mathbb P}(Y_s,Z_s,U_s)ds - \Big(\int_t^T Z_s\cdot dX_s^{c,\mathbb P}\Big)^{\mathbb P}-\int_t^T dM_s^{\mathbb P}+\int_t^T dK_s^{\mathbb P}, t\in [0,T].\end{align*}
\item On $A$ such that $\mathbb P(A)>0$, we have
    \begin{align*}
  Y_t&=Y^b_\tau+\int_t^{\tau} F
    _s^{\mathbb P}(Y_s,Z_s,\xi_s^{a}-Y_s)ds - \Big(\int_t^{\tau} Z^b_s\cdot dX_s^{c,\mathbb P}\Big)^{\mathbb P}-\int_t^{\tau} dM_s^{b,\mathbb P}+\int_t^{\tau} dK_s^{b,\mathbb P},\\
    &=\xi^{a}_\tau+\int_t^{\tau} F
    _s^{\mathbb P}(Y_s,Z_s,\xi_s^{a}-Y_s)ds - \Big(\int_t^{\tau} Z^b_s\cdot dX_s^{c,\mathbb P}\Big)^{\mathbb P}-\int_t^{\tau} dM_s^{b,\mathbb P}+\int_t^{\tau} dK_s^{b,\mathbb P}\\
    &\quad -(\xi_\tau^{a}-Y_\tau),\\
    &=\xi^{a}_\tau+\int_t^{\tau} F
    _s^{\mathbb P}(Y_s,Z_s,\xi_s^{a}-Y_s)ds - \Big(\int_t^{\tau} Z^b_s\cdot dX_s^{c,\mathbb P}\Big)^{\mathbb P}-\int_t^{\tau} dM_s^{b,\mathbb P}+\int_t^{\tau} dK_s^{b,\mathbb P}\\
    &\quad -\int_t^\tau U^\mathbb P_s dH_s,\; t\in [0,T].
    \end{align*}

\item On $A_-$ such that $\mathbb P(A_-)>0$, we note that 
    \begin{align*}
  Y_t&=\xi_{\tau}^{a}\\
  &=\xi_{\tau}^{a}+\int_{t\wedge \tau}^{T\wedge\tau} F
    _s^{\mathbb P}(Y_s,Z_s,\xi_s^{a}-Y_s)ds - \Big(\int_{t\wedge \tau}^{T\wedge\tau}Z^b_s\cdot dX_s^{c,\mathbb P}\Big)^{\mathbb P}\\
    &\quad -\int_{t\wedge \tau}^{T\wedge\tau}dM_s^{b,\mathbb P}+\int_{t\wedge \tau}^{T\wedge\tau}dK_s^{b,\mathbb P},\; t\in [0,T].
    \end{align*}
\end{itemize}
    In all cases, we have seen that for any $\p\in \mathcal P_0$, the processes $(Y,Z,M^\mathbb P,K^\mathbb P,U^\mathbb P)$ defined by \eqref{2bsdedecomposition} is solution to \eqref{2bsde:eq}.\vspace{0.5em}
    
\textit{Step 3. Integrability.} We note that the integrability of $Y,Z,M,K$ is inherited from the integrability of $Y^b,Z^b,K^b,M^b$ and $\xi^a$. Note that 

\[\mathbb E^\mathbb P\Big[\int_0^T e^{\hat\chi \Lambda_t} |U_t|^2 dt \Big] \leq 2\Big(\mathbb E^\mathbb P\Big[\int_0^T e^{\hat\chi \Lambda_t} |\xi^a_t|^2 dt \Big]+ T\mathbb E^\mathbb P\Big[\sup_{t\leq T} e^{\hat\chi \Lambda_t} |Y^b_t|^2 \Big] \Big)<\infty, \]
from Assumption \ref{assumption:L}(ii) and Lemma \ref{thm:auxiliaryBSDE}.\vspace{0.5em}

\textit{Step 4. Minimality condition: interaction $K$ and default time $\tau$.}
On $\{t<\tau\}$, the Hypothesis \ref{densityhyp} ensures that $\mathbb F$-adapted
increments have the same $\mathcal G^+_t$- and
$\mathcal F^+_t$-conditional expectations under each
$\mathbb P'\in\mathcal P_0$, see, e.g.,
\cite{jeulin2006grossissements,jeanblanc2009progressive,bremaud1978changes},
so the minimality of $K^{\mathbb P}$ is inherited from $K^{b,\mathbb P}$ and
Lemma~\ref{thm:auxiliaryBSDE}:
\begin{align*}
\mathrm{ess\,inf}_{\mathbb P'\in\mathcal P_0(t,\mathbb P,\mathbb G_+)}
\mathbb E^{\mathbb P'}\!\left[K^{\mathbb P'}_{T\wedge\tau}-K^{\mathbb P'}_{t\wedge\tau}
\,\big|\,\mathcal G^{\mathbb P',+}_t\right]
&=\mathrm{ess\,inf}_{\mathbb P'\in\mathcal P_0(t,\mathbb P,\mathbb F_+)}
\mathbb E^{\mathbb P'}\!\left[K^{b,\mathbb P'}_{T\wedge\tau}-K^{b,\mathbb P'}_{t\wedge\tau}
\,\big|\,\mathcal F^{\mathbb P',+}_t\right]\\
&=0 \text{ for } \omega\in A^+.
\end{align*}  On the complementary event $\{\tau \leq t\}$, both
$K^{\mathbb P'}_{T\wedge\tau}-K^{\mathbb P'}_{t\wedge\tau}=0$ and
$K^{b,\mathbb P'}_{T\wedge\tau}-K^{b,\mathbb P'}_{t\wedge\tau}=0$, so the
identity is trivial. 
\end{proof}

\begin{remark} The jump induced by the default $\tau$ does not appear in the auxiliary 2BSDE \eqref{2BSDEb:eq} by construction: the
auxiliary equation is defined on $[0,T]$ and the jump is reintroduced via
the before/after-default decomposition
\eqref{2bsdedecomposition}. The decomposition \eqref{2bsdedecomposition} is to
be read as ``$K^{\mathbb P}$ tracks $K^{b,\mathbb P}$ up to time $\tau$ and
is then frozen at its $\tau$-value''; the unambiguous non-decreasing
realization is $K^{\mathbb P}_t := K^{b,\mathbb P}_{t\wedge\tau}$, with which
all integrals appearing in \eqref{2bsde:eq} (which are themselves
stopped at $\tau$) are unchanged, and $K^{\mathbb P}_{T\wedge\tau}-K^{\mathbb P}_{t\wedge\tau}
=K^{b,\mathbb P}_{T\wedge\tau}-K^{b,\mathbb P}_{t\wedge\tau}$ pathwise.
\end{remark}

\subsection{Uniqueness of solution to 2BSDE with erratic horizon}

We define for all $0\leq t\leq T$

\begin{align}
\label{BSDE:eq:tau} \mathcal Y^{\mathbb P,\tau}_t&=\xi-\int_{t\wedge \tau}^{T\wedge \tau}F_s^{\mathbb P}(\mathcal Y^{\mathbb P,\tau}_s,\hat{\sigma}_s^\frac12 \mathcal Z^{\mathbb P,\tau}_s,\mathcal U^{\mathbb P,\tau}_s)ds-\int_{t\wedge \tau}^{T\wedge \tau} \mathcal Z^{\mathbb P,\tau}_s\cdot dX_s^{c,\mathbb P}\\
\nonumber &\quad - \int_{t\wedge \tau}^{T\wedge \tau} \mathcal U^{\mathbb P,\tau}_s dH_s-\int_{t\wedge \tau}^{T\wedge \tau} d\mathcal M_s^{\mathbb P,\tau},\; \mathbb P\text{-a.s.}.
\end{align}

\begin{lemma}\label{lemma:bsde} Assume that Assumption \ref{assumption:L} is satisfied and $\xi$ is an $\mathcal G_{T\wedge \tau}-$measurable random variable such that $\|\xi\|_{L_{0,\hat\chi}^{2}}<\infty$. Fix $\p\in \mathcal P_0$. Then there exists a unique solution $(\mathcal Y^{\mathbb P,\tau},\mathcal Z^{\mathbb P,\tau}, \mathcal U^{\mathbb P,\tau},\mathcal M^{\mathbb P,\tau})\in  \mathbb S^2_{0,\hat\chi}(\mathbb G)\times \mathbb H^2_{0,\hat\chi}(\mathbb G)\times \mathbb J^{2}_{0,\hat\chi}(\mathbb G) \times \mathbb M_{0,\hat\chi}(\mathbb G)$ to the erratic horizon BSDE \eqref{BSDE:eq:tau}.
\end{lemma}

\begin{proof} The proof is decomposed into two parts: first, we deduce the existence of a solution to BSDE \eqref{BSDE:eq:tau} from Assumption \ref{assumption:L}. Then, we turn to the uniqueness. 
\begin{itemize}
    \item[(i)] The existence of a solution is similar to \cite{kharroubi2013mean} or \cite{jeanblanc2015utility} extended to unbounded but integrable $\lambda$ processes with a Lipschitz generator and outside a utility maximization problem. It also follows the same lines as the proof of Theorem \ref{thm:decompbsde} above. We only recall the decomposition of $(\mathcal Y^{\mathbb P,\tau},\mathcal Z^{\mathbb P,\tau}, \mathcal U^{\mathbb P,\tau},\mathcal M^{\mathbb P,\tau})$ in terms of the solution $(\mathcal Y^{\mathbb P},\mathcal Z^{\mathbb P}, \mathcal M^{\mathbb P})$ to BSDE \eqref{BSDE:eq}.

    \begin{align}
\nonumber &\mathcal Y^{\mathbb P,\tau}_t= \mathcal Y^{\mathbb P}_t \mathbf 1_{t<\tau} + \xi^{a}_\tau \mathbf 1_{t\geq \tau},\, \mathcal Z^{\mathbb P,\tau}_t=\mathcal Z^{\mathbb P}_t \mathbf 1_{t<\tau},\; \mathcal M^{\mathbb P,\tau}_t=\mathcal M_t^{\mathbb P}  \mathbf 1_{t<\tau},\; \mathcal U_t^{\mathbb P,\tau}=(\xi^{a}_t-\mathcal Y_t^{\mathbb P})\mathbf 1_{t<\tau},\\
\label{bsdedecomposition}& \forall t<T,\; \mathbb P\text{-a.s.}
\end{align}

\item[(ii)] We now turn to the uniqueness under Assumption \ref{assumption:L} which follows classical linearization techniques; see, for example, \cite{el1997backward}. We consider two solutions to BSDE \eqref{BSDE:eq:tau} denoted by $(\mathcal Y^{\mathbb P,\tau},\mathcal Z^{\mathbb P,\tau}, \mathcal U^{\mathbb P,\tau},\mathcal M^{\mathbb P,\tau})$ and $(\tilde{\mathcal Y}^{\mathbb P,\tau},\tilde{\mathcal Z}^{\mathbb P,\tau}, \tilde{\mathcal U}^{\mathbb P,\tau},\tilde{\mathcal M}^{\mathbb P,\tau})$ and we set
\[\delta_t^{\mathcal V}:= \mathcal V_t^{\mathbb P,\tau}-\tilde{\mathcal V}_t^{\mathbb P,\tau},\; \mathcal V\in\{\mathcal Y,\mathcal Z,\mathcal U,\mathcal M\}.\]Then, $(\delta \mathcal Y,\delta\mathcal Z,\delta\mathcal U,\delta\mathcal M)$ is solution to

\begin{align*} \delta_t^{\mathcal Y}&=0-\int_{t\wedge \tau}^{T\wedge \tau}\delta_s^{ F^{\mathbb P}}ds-\int_{t\wedge \tau}^{T\wedge \tau} \delta_s^{\mathcal Z}\cdot dX_s^{c,\mathbb P}- \int_{t\wedge \tau}^{T\wedge \tau} \delta_s^{\mathcal U}  dH_s-\int_{t\wedge \tau}^{T\wedge \tau} d\delta_s^{\mathcal M},\; \mathbb P\text{-a.s.},
\end{align*}
where 
\begin{align*}
  \delta_s^{ F^{\mathbb P}}&:=F_s^{\mathbb P}(\mathcal Y^{\mathbb P,\tau}_s,\hat{\sigma}_s^\frac12 \mathcal Z^{\mathbb P,\tau}_s,\mathcal U^{\mathbb P,\tau}_s)- F_s^{\mathbb P}(\tilde{\mathcal Y}^{\mathbb P,\tau}_s,\hat{\sigma}_s^\frac12 \tilde{\mathcal Z}^{\mathbb P,\tau}_s,\tilde{\mathcal U}^{\mathbb P,\tau}_s)\\
  &= F_s^{\mathbb P}(\mathcal Y^{\mathbb P,\tau}_s,\hat{\sigma}_s^\frac12 \mathcal Z^{\mathbb P,\tau}_s,\mathcal U^{\mathbb P,\tau}_s)- F_s^{\mathbb P}(\tilde{\mathcal Y}^{\mathbb P,\tau}_s,\hat{\sigma}_s^\frac12 \mathcal Z^{\mathbb P,\tau}_s,\mathcal U^{\mathbb P,\tau}_s)\\
  &+ F_s^{\mathbb P}(\tilde{\mathcal Y}^{\mathbb P,\tau}_s,\hat{\sigma}_s^\frac12 \mathcal Z^{\mathbb P,\tau}_s,\mathcal U^{\mathbb P,\tau}_s)- F_s^{\mathbb P}(\tilde{\mathcal Y}^{\mathbb P,\tau}_s,\hat{\sigma}_s^\frac12 \tilde{\mathcal Z}^{\mathbb P,\tau}_s,\mathcal U^{\mathbb P,\tau}_s)\\
  &+F_s^{\mathbb P}(\tilde{\mathcal Y}^{\mathbb P,\tau}_s,\hat{\sigma}_s^\frac12 \tilde{\mathcal Z}^{\mathbb P,\tau}_s,\mathcal U^{\mathbb P,\tau}_s)-F_s^{\mathbb P}(\tilde{\mathcal Y}^{\mathbb P,\tau}_s,\hat{\sigma}_s^\frac12 \tilde{\mathcal Z}^{\mathbb P,\tau}_s,\tilde{\mathcal U}^{\mathbb P,\tau}_s)\\
  &=\theta_s \delta_s^{\mathcal Y}+\zeta_s \delta_s^{\mathcal Z} +\nu_s \lambda_s \delta_s^{\mathcal U},
\end{align*}
where $\theta$, $\zeta$, $\nu$ are bounded processes obtained from the mean-value theorem under Assumption~\ref{assumption:L}. From Girsanov theorem, changing the probability measure for both the martingales $X^{c,\mathbb P}$ and $\tilde H$ and by considering the process $\tilde D_t:= e^{-\int_0^{t\wedge \tau} \theta_s ds} \delta_t^{\mathcal Y}$, we deduce that $\tilde D_t=0$ and so $\delta_t^{\mathcal Y}=0$ for any time $t\in [0,T]$,\; $\mathbb P\text{-a.s.}$ Consequently, $\mathcal Y_t^{\mathbb P,\tau}=\tilde{\mathcal Y}_t^{\mathbb P,\tau}$ for all $t\leq T,\; \mathbb P\text{-a.s.}$ The uniqueness of the solution then follows by the uniqueness of the semimartingale representation of a process.\qedhere
\end{itemize}
\end{proof}

As a consequence of Assumption \ref{assumption:L}, \cite[Theorem 4.2]{possamai2018stochastic} or \cite{possamai2025mind} we have the following dynamic programming principle. 
\begin{proposition}[Dynamic programming]\label{DPP}Assume that $\xi$ is an $\mathcal G_{T\wedge \tau}-$measurable random variable such that $\|\xi\|_{L_{0,\hat\chi}^{2}}<\infty$. Then, any solution $(Y,Z,U,M,K)\in  \mathbb S^2_{0, \hat{\chi}}(\mathbb G^{\mathcal P_0+})\times \mathbb H^2_{0, \hat{\chi}}(\mathbb G^{\mathcal P_0+})\times \mathbb J^{2}_{0, \hat{\chi}}(\mathbb G^{\mathcal P_0+}) \times \mathbb I^2_{0, \hat{\chi}}((\mathbb G^{\mathbb P +})_{\p\in \mathcal P_0})\times \mathbb M_{0, \hat{\chi}}((\mathbb G^{\mathbb P +})_{\p\in \mathcal P_0})$ to the 2BSDE \eqref{2bsde:eq}
satisfies the following dynamic programming principle for any $0\leq t_1\leq t_2\leq T$
\[Y_{t_1}=\text{ess sup}^{\mathbb P}_{\mathbb P'\in \mathcal P(t_1,\mathbb P,\mathbb G)}\; \mathcal Y_{t_1}^{\mathbb P',\tau}(t_2,Y_{t_2}),\; \mathbb P\text{-a.s.},\]
where $\big(\mathcal Y^{\mathbb P',\tau}(t_2,Y_{t_2}),\mathcal Z^{\mathbb P',\tau}(t_2,Y_{t_2}), \mathcal U^{\mathbb P',\tau}(t_2,Y_{t_2}),\mathcal M^{\mathbb P',\tau}(t_2,Y_{t_2})\big)$ is a solution to \eqref{BSDE:eq:tau} with horizon $T=t_2$ and terminal condition $\xi=Y_{t_2}$.
\end{proposition}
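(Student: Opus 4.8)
The plan is to prove the two inequalities separately, each exploiting one of the two defining features of a solution to \eqref{2bsde:eq}: the monotonicity of every $K^{\mathbb P'}$ for the lower bound, and the minimality of the family $(K^{\mathbb P'})_{\mathbb P'}$ for the upper bound. Fix $0\le t_1\le t_2\le T$ and $\mathbb P\in\mathcal P_0$. Localized to the stochastic interval $[t_1\wedge\tau,t_2\wedge\tau]$ with terminal datum $Y_{t_2}$, equation \eqref{2bsde:eq} read under a fixed $\mathbb P'$ is precisely the random-horizon BSDE \eqref{BSDE:eq:tau} augmented by the nondecreasing term $K^{\mathbb P'}$, whereas $\mathcal Y^{\mathbb P',\tau}(t_2,Y_{t_2})$ solves the same equation with $K\equiv 0$; its well-posedness and the integrability of the new terminal condition $Y_{t_2}$ follow from Lemma \ref{lemma:bsde} applied with horizon $t_2$ and terminal value $Y_{t_2}$, together with $Y\in\mathbb S^p_0$. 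Two preliminary reductions will be used throughout. First, on $\{t_1\ge\tau\}$ the stochastic integrals in \eqref{2bsde:eq} over $[t_1\wedge\tau,t_2\wedge\tau]$ collapse, forcing $Y_{t_1}=\xi^a_\tau=\mathcal Y^{\mathbb P',\tau}_{t_1}$ for every $\mathbb P'$, so the identity there is immediate and is recovered automatically by the representation below. Second, matching the jumps of both equations at $\tau$ gives $U=\xi^a-Y_{\cdot^-}$ and $\mathcal U^{\mathbb P',\tau}=\xi^a-\mathcal Y^{\mathbb P',\tau}_{\cdot^-}$ on $\{s\le\tau\}$, hence $\delta U:=U-\mathcal U^{\mathbb P',\tau}=-\delta Y$ there; this lets me handle the default term by reduction to a Brownian-type comparison.

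For the lower bound I set $\delta Y:=Y-\mathcal Y^{\mathbb P',\tau}$ and linearize the generator difference exactly as in the uniqueness step of Lemma \ref{lemma:bsde}, producing from Assumption \ref{assumption:L} bounded coefficients $\alpha,\zeta$ and a coefficient $\nu$ with $|\nu|\le C$ in front of $\lambda\,\delta U$. Folding the default term into the drift via $\delta U=-\delta Y$ (equivalently, shifting the intensity by Girsanov) and applying the Girsanov transform in $X^{c,\mathbb P'}$ to absorb $\zeta$, I obtain a measure $\mathbb Q'\sim\mathbb P'$ and a strictly positive integrating factor $\Gamma$ such that $\Gamma\,\delta Y$ is a $\mathbb Q'$-local martingale up to $-\int\Gamma\,dK^{\mathbb P'}$. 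Since $\delta Y_{t_2}=0$, taking conditional expectations yields the representation
\[Y_{t_1}-\mathcal Y^{\mathbb P',\tau}_{t_1}=\E^{\mathbb Q'}\Big[\int_{t_1\wedge\tau}^{t_2\wedge\tau}\Gamma_s\,dK^{\mathbb P'}_s\,\Big|\,\mathcal G^{\mathbb P',+}_{t_1}\Big]\ge 0,\]
the sign being clear because $\Gamma>0$ and $K^{\mathbb P'}$ is nondecreasing. As $\mathbb P'=\mathbb P$ on $\mathcal G_{t_1}$, this holds $\mathbb P$-a.s., and passing to the essential supremum over $\mathbb P'\in\mathcal P(t_1,\mathbb P,\mathbb G)$ gives $Y_{t_1}\ge\esssup_{\mathbb P'}\mathcal Y^{\mathbb P',\tau}_{t_1}(t_2,Y_{t_2})$.

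For the reverse inequality I reuse the same representation. Controlling $\Gamma$ and changing measure back from $\mathbb Q'$ to $\mathbb P'$ through Hölder's inequality, with the density handled using the $L^{\hat p}$-integrability of $\lambda$ and the exponent relation $\ell=p(\hat p-1)/\hat p$, bounds the right-hand side by $C\big(\E^{\mathbb P'}[(K^{\mathbb P'}_{t_2\wedge\tau}-K^{\mathbb P'}_{t_1\wedge\tau})^{q'}\mid\mathcal G^{\mathbb P',+}_{t_1}]\big)^{1/q'}$ for a suitable $q'$. Combining with $\mathcal Y^{\mathbb P',\tau}_{t_1}\le\esssup_{\mathbb P''}\mathcal Y^{\mathbb P'',\tau}_{t_1}$ and the monotonicity bound $K^{\mathbb P'}_{t_2\wedge\tau}-K^{\mathbb P'}_{t_1\wedge\tau}\le K^{\mathbb P'}_{T\wedge\tau}-K^{\mathbb P'}_{t_1\wedge\tau}$, I take the essential infimum over $\mathbb P'\in\mathcal P(t_1,\mathbb P,\mathbb G)$; the minimality condition of Definition \ref{def:2bsde} then forces this infimum to vanish, giving $Y_{t_1}\le\esssup_{\mathbb P'}\mathcal Y^{\mathbb P',\tau}_{t_1}(t_2,Y_{t_2})$ and hence equality.

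The main obstacle is the upper bound, and specifically the reconciliation of the minimality condition — available only as an $L^1$ conditional expectation at horizon $T$ — with the $L^{q'}$ bound produced by the change of measure. Closing this gap requires upgrading $L^1$ to $L^{q'}$ convergence along a minimizing sequence $(\mathbb P'_n)$, which I would achieve by uniform integrability, using the uniform $L^p$-bound on the family $(K^{\mathbb P'})$ coming from the solution space together with the exponent bookkeeping ($q'$ strictly below $p$) furnished by the relations between $\ell$, $p$ and $\hat p$; this in turn presupposes that $\{\mathcal Y^{\mathbb P',\tau}_{t_1}\}_{\mathbb P'}$ is upward directed, which I would establish by the usual concatenation of measures in $\mathcal P(t_1,\mathbb P,\mathbb G)$ so that the essential supremum and infimum are attained along sequences. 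A second delicate point, rather than the routine linearization, is that the change of measure and the integrating factor involve the intensity $\lambda$, which is only polynomially ($L^{\hat p}$) integrable; keeping the resulting densities in the correct $L^q$-space against this limited integrability — while, through the reduction $\delta U=-\delta Y$, recasting the default term as a Brownian comparison with a stochastic-Lipschitz coefficient rather than imposing a Royer-type monotonicity of $f$ in the default variable — is where the real care is needed.
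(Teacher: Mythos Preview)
Your two-step argument—lower bound via linearization and monotonicity of each $K^{\mathbb P'}$, upper bound via the minimality condition—is precisely the scheme of \cite[Theorem 4.2]{possamai2018stochastic}, which is all the paper's proof invokes. You in fact go further than the paper by spelling out the adaptation to the default term (the reduction $\delta U=-\delta Y$, which can be justified for an arbitrary solution via the $\mathcal P_0$-density hypothesis since the jump identity at $\tau$ propagates to $\lambda_s\,ds$-a.e.\ $s$, and on $\{\lambda_s=0\}$ the generator is $u$-independent anyway) and by correctly flagging the control of the integrating factor against only $L^{\hat p}$-integrability of $\lambda$ as the genuinely delicate point; neither you nor the paper fully closes that last detail, but the overall route is identical.
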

Consequently, by the uniqueness of the solution to \eqref{BSDE:eq:tau} we have the following corollary.

\begin{corollary}[Uniqueness]\label{thm:lipschitz}Assume that $\xi$ is an $\mathcal G_{T\wedge \tau}-$measurable random variable such that $\|\xi\|_{L_{0,\hat\chi}^{2}}<\infty$ and Assumption \ref{assumption:L} is enforced. Then 2BSDE \eqref{2bsde:eq} has at most one solution in the sense of Definition \ref{def:2bsde}.
\end{corollary}

\begin{remark}\label{remark:decomp}
        Note that on the one hand, Theorem \ref{thm:decompbsde} provides a decomposition of the solution to \eqref{2bsde:eq} provided the existence of a solution to the auxiliary BSDE and 2BSDE \eqref{BSDE:eq} and \eqref{2BSDEb:eq}. On the other hand, under the Lipschitz Assumption \ref{assumption:L}, the DPP gives a representation of \eqref{2bsde:eq} as a supremum of the solution to the erratic horizon BSDE \eqref{BSDE:eq:tau}. The following relation proves the consistency of the solutions given the two methods: 
        
        \begin{align*}Y_t\overset{Proposition \; \ref{DPP}}{=}\esssup_{\mathbb P} \mathcal Y_t^{\mathbb P,\tau}\overset{Lemma \; \ref{lemma:bsde}}{=}\esssup_{\mathbb P} \mathcal Y_t^{\mathbb P,b}\mathbf 1_{t<\tau} +\xi^a_\tau \mathbf 1_{t>\tau}
        &\overset{Lemma \; \ref{thm:auxiliaryBSDE}}{=}Y_t^b \mathbf 1_{t<\tau} +\xi^a_\tau \mathbf 1_{t>\tau}\\
        &\overset{Theorem \; \ref{thm:decompbsde}}{=}Y_t.  
  \end{align*}
    \end{remark}

\begin{remark}\label{rkDPP}
    Alternatively, one could consider the following 2BSDE
    \begin{align}
\label{2bsde:eq-} Y_t&=\xi-\int_{t\wedge \tau}^{T\wedge \tau}F_s^{\mathbb P}(Y_s,\hat{\sigma}_s^\frac12 Z_s,U_s)ds-\int_{t\wedge \tau}^{T\wedge \tau} Z_s\cdot dX_s^{c,\mathbb P}-\int_{t\wedge \tau}^{T\wedge \tau} dM_s^\mathbb P\\
\nonumber &-(K_{T\wedge \tau}-K_{t\wedge \tau})- \int_{t\wedge \tau}^{T\wedge \tau} U_s dH_s,\text{ for all }0\leq t\leq T,\; \mathbb P\text{-a.s.}, \text{ for every }\p\in \mathcal P_0,
\end{align}
where we have to consider a minus sign for the $K$ term. In this case, the previous result still holds up to an infimum instead of a supremum and more particularly: 

\[ Y_t\overset{Proposition \; \ref{DPP}}{=}\essinf_{\mathbb P} \mathcal Y_t^{\mathbb P,\tau}\]
\end{remark}

\subsection{Comparison theorem for Lipschitz 2BSDE with erratic horizon}

\begin{theorem}[Comparison]\label{comparaisontheorem}
Let $(Y,Z,U, (K^{\mathbb P})_{\p\in \mathcal P_0},(M^{\mathbb P})_{\mathbb P\in \mathcal P_0})\addd{}$ and $(\tilde Y,\tilde Z,\tilde U, (\tilde K^{\mathbb P})_{\p\in \mathcal P_0},(\tilde M^{\mathbb P})_{\mathbb P\in \mathcal P_0})\addd{}$ be solutions to the 2BSDE \eqref{2bsde:eq} with respective terminal conditions $\xi$ and $\tilde \xi$ and generators $F^{\mathbb P}$ and $\widetilde{F^{\mathbb P}}$. Assume that $\xi^b\geq  \tilde \xi^b$,\; $\xi^{a}=\tilde{\xi}^{a}$ and $F^{\mathbb P,b}_t(y,z,u-y)\geq \widetilde{F^{\mathbb P,b}_t}(y,z,u-y) $,\; $\mathbb P\text{-a.s.}$ for any $\p\in \mathcal P_0$. Then, $Y_t\geq \tilde{Y_t},\; \mathbb P\text{-a.s.},\; \forall \p\in \mathcal P_0.$
\end{theorem}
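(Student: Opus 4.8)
The plan is to reduce the comparison at the level of the 2BSDE \eqref{2bsde:eq} to a comparison at the level of the auxiliary objects we already control, exploiting the decomposition established in Theorem \ref{thm:decompbsde} together with the dynamic programming representation of Proposition \ref{DPP}. Since both tuples are solutions to the Lipschitz 2BSDE, Theorem \ref{thm:lipscitz} guarantees they are \emph{the} unique solutions, and by Remark \ref{remark:decomp} each admits the representation
\[
Y_t = \esssup_{\mathbb P'\in \mathcal P(t,\mathbb P,\mathbb G)} \mathcal Y_t^{\mathbb P',\tau}, \qquad \tilde Y_t = \esssup_{\mathbb P'\in \mathcal P(t,\mathbb P,\mathbb G)} \tilde{\mathcal Y}_t^{\mathbb P',\tau},
\]
where $\mathcal Y^{\mathbb P',\tau}$ and $\tilde{\mathcal Y}^{\mathbb P',\tau}$ solve the uncertain-horizon BSDE \eqref{BSDE:eq:tau} associated to the two sets of data. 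Because the essential suprema are taken over the same family $\mathcal P(t,\mathbb P,\mathbb G)$, it suffices to prove the pointwise comparison $\mathcal Y_t^{\mathbb P',\tau} \geq \tilde{\mathcal Y}_t^{\mathbb P',\tau}$, $\mathbb P'$-a.s., for each fixed $\mathbb P'\in \mathcal P_0$, and then pass to the supremum.

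First I would fix $\mathbb P\in \mathcal P_0$ and invoke the decomposition \eqref{bsdedecomposition} from Lemma \ref{lemma:bsde}: on $\{t\geq \tau\}$ both $\mathcal Y^{\mathbb P,\tau}$ and $\tilde{\mathcal Y}^{\mathbb P,\tau}$ equal $\xi^a_\tau = \tilde\xi^a_\tau$ by the hypothesis $\xi^a = \tilde\xi^a$, so they coincide there and the comparison is trivial. On $\{t<\tau\}$ we have $\mathcal Y^{\mathbb P,\tau}_t = \mathcal Y^{\mathbb P}_t$ and $\tilde{\mathcal Y}^{\mathbb P,\tau}_t = \tilde{\mathcal Y}^{\mathbb P}_t$, where $\mathcal Y^{\mathbb P}, \tilde{\mathcal Y}^{\mathbb P}$ solve the Brownian auxiliary BSDE \eqref{BSDE:eq} with generators $F^{\mathbb P,b}_s(y,z,\xi^a_s - y)$ and $\widetilde{F^{\mathbb P,b}_s}(y,z,\tilde\xi^a_s - y)$ and terminal data $\xi^b, \tilde\xi^b$. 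Thus the problem collapses to a comparison principle for two \emph{standard} (Brownian-driven, $\mathbb F$-adapted) BSDEs under a single fixed measure $\mathbb P$, where the horizon is now the deterministic $T$.

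At this reduced level I would run the classical linearization argument of \cite{el1997backward}, mirroring the uniqueness computation in part (ii) of the proof of Lemma \ref{lemma:bsde}. Set $\delta \mathcal Y := \mathcal Y^{\mathbb P} - \tilde{\mathcal Y}^{\mathbb P}$, and similarly for $\delta \mathcal Z, \delta \mathcal M$. Writing $\hat g_s(y,z) := F^{\mathbb P,b}_s(y,z,\xi^a_s - y)$ and $\tilde g_s(y,z) := \widetilde{F^{\mathbb P,b}_s}(y,z,\tilde\xi^a_s - y)$, the effective drivers incorporate the $U$-dependence through the substitution $u = \xi^a_s - y$; crucially the hypothesis $F^{\mathbb P,b}_t(y,z,u-y)\geq \widetilde{F^{\mathbb P,b}_t}(y,z,u-y)$ is exactly the statement that $\hat g_s \geq \tilde g_s$ pointwise once we identify $u$ with $\xi^a_s$, so that after subtracting and adding $\hat g_s(\tilde{\mathcal Y}^{\mathbb P}, \tilde{\mathcal Z}^{\mathbb P})$ the generator difference splits into a nonnegative driver-comparison term plus a linearizable part. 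The Lipschitz bounds in Assumption \ref{assumption:L} produce bounded predictable processes $\alpha_s$ (coefficient of $\delta \mathcal Y$) and $\zeta_s$ (coefficient of $\delta \mathcal Z$); note that the $U$-channel does not appear as an independent direction here because $u$ has been frozen to $\xi^a_s$ in both generators, which is what makes the single substitution $u-y$ in the hypothesis the natural one. A Girsanov change of measure absorbing $\zeta$ into the dynamics of $X^{c,\mathbb P}$, together with the discount factor $D_t := \exp(-\int_0^t \alpha_s\, ds)$, turns $D_t\, \delta\mathcal Y_t$ into a supermartingale (under the new measure) with nonnegative terminal value $\delta\xi^b = \xi^b - \tilde\xi^b \geq 0$ and a nonnegative drift contribution from $\hat g_s - \tilde g_s \geq 0$, forcing $\delta \mathcal Y_t \geq 0$, $\mathbb P$-a.s.

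The main obstacle, and where care is needed, is the handoff between the $U$-variable of the uncertain-horizon equation and the frozen argument $u = \xi^a_s$ in the comparison hypothesis: one must verify that the decomposition \eqref{bsdedecomposition} indeed reduces the pre-$\tau$ dynamics to the auxiliary BSDE \eqref{BSDE:eq} with precisely the generator $F^{\mathbb P,b}_s(\,\cdot\,,\cdot\,,\xi^a_s - \cdot\,)$, so that the stated sign condition on $F^{\mathbb P,b}_t(y,z,u-y)$ translates without loss into the required driver ordering $\hat g \geq \tilde g$. A secondary technical point is ensuring the Girsanov transformation is legitimate and the resulting local supermartingale is a genuine supermartingale under the integrability afforded by the norms $\|\xi\|_{L^{p,\kappa}_0} < \infty$ and Assumption \ref{assumption:L}(iii); this is standard but should be stated. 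Finally, having established $\mathcal Y_t^{\mathbb P',\tau} \geq \tilde{\mathcal Y}_t^{\mathbb P',\tau}$ for every $\mathbb P'$, taking the essential supremum over $\mathcal P(t,\mathbb P,\mathbb G)$ on both sides yields $Y_t \geq \tilde Y_t$, $\mathbb P$-a.s., for all $\mathbb P\in \mathcal P_0$, which is the claim.
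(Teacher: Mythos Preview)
Your proposal is correct and follows essentially the same route as the paper: reduce via the decomposition of Remark \ref{remark:decomp} to a comparison for the auxiliary Brownian BSDE \eqref{BSDE:eq} under each fixed $\mathbb P$, prove that comparison by the standard linearization of \cite{el1997backward} (as in part (ii) of Lemma \ref{lemma:bsde}), and then pass to the essential supremum. One minor imprecision worth noting: the effective $y$-coefficient $\alpha_s$ in the linearization is not bounded but rather of the form $C(1+\lambda_s)$, since the third argument $\xi^a_s - y$ contributes a $\lambda_s$-weighted term through Assumption \ref{assumption:L}(ii); this does not affect the argument given the integrability assumed on $\lambda$, and the paper's proof is equally terse on this point.
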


\begin{proof}
    The proof is divided into two main parts: first, we show a comparison theorem for the auxiliary BSDE \eqref{2BSDEb:eq}, then we deduce from Remark \ref{remark:decomp}. \textcolor{blue}{The proof exploits the decomposition of the 2BSDE solution via the auxiliary BSDE, together with the freezing of its value at $\xi^a$ after the default time.}\vspace{0.5em}
    
\textit{Step 1. Comparison auxiliary BSDE. }Following the same argument as the proof of Lemma \ref{lemma:bsde} by considering 
    \[\delta_t^{\mathcal V}:= \mathcal V^{\mathbb P,b}_t-\tilde{\mathcal V}^{\mathbb P}_t,\; \mathcal V\in\{\mathcal Y,\mathcal Z,\mathcal M\}.\]
    By using the same linearization, we deduce that a comparison theorem holds true for the solutions to the auxiliary BSDE \eqref{BSDE:eq}: 
    \[\mathcal Y_t^{\mathbb P,b}\geq \widetilde{\mathcal Y}_t^{\mathbb P,b},\; t\leq T,\; \mathbb P\text{-a.s.},\; \p\in \mathcal P_0.\]

\textit{Step 2. Comparison and decomposition.} From Remark \ref{remark:decomp}, we deduce that 
    \begin{align*}
    Y_t&= \esssup_{\mathbb P} \mathcal Y_t^{\mathbb P,b}\mathbf 1_{t<\tau} +\xi^a_\tau \mathbf 1_{t>\tau}\\
    &\geq \esssup_{\mathbb P} \widetilde{\mathcal Y}_t^{\mathbb P,b}\mathbf 1_{t<\tau} +\xi^a_\tau \mathbf 1_{t>\tau}= \esssup_{\mathbb P} \widetilde{\mathcal Y}_t^{\mathbb P}\mathbf 1_{t<\tau} +\xi^a_\tau \mathbf 1_{t>\tau}=\tilde Y_t.\qedhere \end{align*}
\end{proof}

\begin{remark}
\label{sec:aggregation}
    Note that under the $\mathcal P_0-$density assumption, the process $\gamma$ and so $\lambda$ is aggregated. Existing
2BSDE-with-jumps frameworks
\cite{kazi2015second,denis2024second,kim2024representation} carry
\emph{one} jump-size process per measure $\mathbb P\in\mathcal P_0$
because the predictable compensator depends on~$\mathbb P$. Hypothesis~\ref{densityhyp}
thus allows a single $\mathbb G^{\mathcal P_0+}$-predictable~$U$ to
work for all~$\mathbb P\in\mathcal P_0$.
The aggregation of $U$ is structural and uses
Theorem~\ref{thm:decompbsde}: on $\{t<\tau\}$, the decomposition
gives
\[
U_t=\xi^a_t-Y^b_t,
\]
and both factors does not depend on the choice of $\mathbb P$ under
Hypothesis~\ref{densityhyp}. Indeed, $\xi^a$ is the $\mathbb F$-%
predictable kernel from the decomposition
$\xi=\xi^b\ind_{T<\tau}+\xi^a_\tau\ind_{T\geq\tau}$, and its
construction is driven by the $\mathbb F$-conditional law of
$\tau$ which is governed by the aggregated $\gamma$ and is
therefore the same under every $\mathbb P\in\mathcal P_0$. The
auxiliary value $Y^b_t=\esssup_{\mathbb P'}\mathcal Y^{\mathbb P'}_t$
is $\mathbb F^{\mathcal P_0+}$-measurable by
Lemma~\ref{thm:auxiliaryBSDE}, hence $\mathbb P$-independent by
construction. The product $(\xi^a_t-Y^b_t)\ind_{t<\tau}$ is then a
single $\mathbb G^{\mathcal P_0+}$-predictable process, and the
compensator of $\int U\,dH$ is the unique expression
$\int U\,\lambda\,\ind_{s\leq\tau}\,ds$ under every
$\mathbb P\in\mathcal P_0$.
\end{remark}

\subsection{Markovian 2BSDE with erratic horizon and connection with fully non linear PDE}\label{section:PDE}
In the whole section, we denote by $\mathcal C^{1,2}([0,T]\times \mathbb R^d)$ the set of maps from time and space $[0,T]\times \mathbb R^d$ into $\mathbb R$ continuously differentiable map with respect to the time component and twice continuously differentiable with respect to the space variable for any $v\in \mathcal C^{1,2}([0,T]\times \mathbb R^d)$ we denote by $\partial_t v,\nabla v,\Delta v$ its derivative in time, gradient and Laplacian respectively. In this section, we assume that the following conditions are satisfied.

\begin{assumption}We set the main Markovian assumptions for this section as follows:
   \begin{itemize}
   \item There exists a function $g:[0,T]\times \mathbb R^d\longrightarrow \mathbb R $ such that $\xi=g(T,X_T)$.
   \item  $\mu^\mathbb P=0$ so that the canonical process given by $X$ is a martingale with quadratic variation $\hat{\sigma}$. In particular, the process $W^\mathbb P$ defined by
   \[W_t^{\mathbb P}:=\int_0^t \hat{\sigma}_s^{-1/2} dX_s,\; \mathbb P\text{-a.s.}, \] is a $\mathbb P-$Brownian motion for any $\p\in \mathcal P_0$.
   \item There exists a generator $H^b:[0,T]\times \mathbb R^d\times \mathbb R\times \mathbb R^d\times \mathbb R\times D_H\longrightarrow \mathbb R$ where $D_H$ is a subset of $\mathbb R^{d\times d}$ containing $0$ such that
   \[F^b_t(x,y,z,u,\hat{\sigma}):= \sup_{\gamma\in \mathbb R^{d\times d}} \Big\{ \frac12 Tr(\hat{\sigma}\gamma)-H^b(t,x,y,z,u,\gamma)  \Big\},\; \hat{\sigma}\in \mathcal S_d^{>0}.\]
   \end{itemize}
\end{assumption}
The 2BSDE \eqref{2bsde:eq} thus becomes
\begin{align}
\nonumber Y_t&=g(T,X_T)-\int_{t\wedge \tau}^{T\wedge \tau}F_s(Y_s, Z_s,U_s,\hat{\sigma}_s)ds-\int_{t\wedge \tau}^{T\wedge \tau} Z_s\cdot dX_s +K_{T\wedge \tau}-K_{t\wedge \tau}- \int_{t\wedge \tau}^{T\wedge \tau} U_s dH_s,\\
\label{2BSDEmarkov}&\text{ for all }0\leq t\leq T,\; \mathbb P\text{-a.s.}, \text{ for every }\p\in \mathcal P_0,
\end{align}
where $F$ is defined by \[F_s(y,z,u,a):=f(s,X_{s},y,z,u,\hat{\sigma}_s,\lambda_s), \] and where $f$ satisfied the Lipschitz Assumption \ref{assumption:L}.
\begin{remark}
    Note that $F$ may not be a Markovian function of $t,X_t$ since we are not specifying any regularity assumption for the intensity process $\lambda$. 
\end{remark}
We are recalling the auxiliary 2BSDE in this 
Markovian framework

\begin{align}
\nonumber &Y^b_t=g(T,X_T)-\int_{t}^{T}F_s^{b}(Y^{b}_s, Z^{b}_s,g(s,X_s)-Y^{b}_s,\hat{\sigma}_s)ds-\int_{t}^{T} Z^{b}_s\cdot d X_s+K^{b,\mathbb P}_T-K^{b,\mathbb P}_t,\\
\label{2BSDEb:eq:markov}&\text{ for all }0\leq t\leq T,\; \mathbb P\text{-a.s.},\; \forall \p\in \mathcal P_0.
\end{align}
We define the bi-conjugate of $H^b$ as follows
\[ \hat{H}^{b}(t,x,y,z,u,\gamma):=\sup_{a\in \mathcal S_d^{\geq 0}} \Big\{\frac12 Tr(a\gamma) - F^b_t(x,y,z,u,a)\Big\}.\]

We introduce the following fully nonlinear PDEs
\[(PDE)\begin{cases}
\partial_t v(t,x)+\hat{H}^{b}(t,x,v(t,x),\nabla v(t,x),g(t,x)-v(t,x),\Delta v(t,x))=0,\; t<T\\
v(T,x)=g(T,x),\; x\in \mathbb R^d.
\end{cases}\]

\begin{lemma}[Theorem 5.3 in \cite{soner2012wellposedness}]\label{lemmaFK}
Let $v\in \mathcal C^{1,2}([0,T]\times \mathbb R^d)$ be a solution to (PDE). Then the auxiliary 2BSDE \eqref{2BSDEb:eq:markov} admits a unique solution given by
\[Y^b_t:=v(t,X_t),\; Z^b_t:=\nabla v(t,x),\; K^b_t:=\int_0^t k^b_s ds,\]
where 
\[k^b_s=\hat{H}^{b}(s,X_s,Y^b_s,Z^b_s,g(s,X_s)-Y^b_s,\Gamma_s),\; \Gamma_s=\Delta v(s,X_s).\]
\end{lemma}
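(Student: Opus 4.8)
The plan is to run a verification (Feynman–Kac) argument in the spirit of \cite{soner2012wellposedness}: I would postulate that $Y^b_t = v(t,X_t)$ solves the auxiliary 2BSDE \eqref{2BSDEb:eq:markov}, confirm this by Itô's formula, read off $Z^b$ from the martingale part and $K^b$ from the drift, and then borrow uniqueness from Lemma \ref{thm:auxiliaryBSDE}. First I would fix $\mathbb P \in \mathcal P_0$. The Markovian assumption forces $\mu^\mathbb P = 0$, so $X$ is a continuous $\mathbb P$-martingale with $d\langle X\rangle_s = \hat a_s\, ds$, and since $v \in \mathcal C^{1,2}([0,T]\times\mathbb R^d)$ Itô's formula gives $dv(t,X_t) = \big(\partial_t v + \tfrac12 \mathrm{Tr}(\hat a_t \Delta v)\big)(t,X_t)\,dt + \nabla v(t,X_t)\cdot dX_t$. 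Setting $Y^b_t := v(t,X_t)$ and $Z^b_t := \nabla v(t,X_t)$, the stochastic integral already matches $\int Z^b_s\cdot dX_s$ (so no orthogonal martingale term survives, consistent with the absence of an $M^{b,\mathbb P}$ in \eqref{2BSDEb:eq:markov}), and the terminal value $Y^b_T = v(T,X_T) = g(T,X_T)$ follows from the boundary condition of (PDE).

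Next I would define $K^b$ through its density by matching the $dt$-parts of the Itô expansion and of \eqref{2BSDEb:eq:markov}, which gives $dK^b_t = \big(F^b_t(Y^b_t,Z^b_t,g(t,X_t)-Y^b_t,\hat a_t) - \partial_t v - \tfrac12 \mathrm{Tr}(\hat a_t \Delta v)\big)\,dt$. Substituting $\partial_t v = -\hat H^{b}(t,X_t,Y^b_t,Z^b_t,g(t,X_t)-Y^b_t,\Gamma_t)$ from (PDE) identifies $k^b_t = F^b_t(\cdot,\hat a_t) + \hat H^{b}(\cdot,\Gamma_t) - \tfrac12 \mathrm{Tr}(\hat a_t \Gamma_t)$, i.e. the Fenchel–Young slack between $F^b_t(\cdot,\hat a_t)$ and $\hat H^{b}(\cdot,\Gamma_t)$. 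The crucial sign check is then immediate: since $\hat H^{b}(\cdot,\gamma) = \sup_{a\in\mathcal S_d^{\geq 0}}\{\tfrac12 \mathrm{Tr}(a\gamma) - F^b_t(\cdot,a)\}$, applying Fenchel–Young to the admissible PSD matrix $a=\hat a_t$ with $\gamma=\Gamma_t$ yields $k^b_t \geq 0$, $\mathbb P$-a.s., with equality precisely when $\hat a_t$ attains the supremum defining $\hat H^b(\cdot,\Gamma_t)$. Hence $K^b$ is nondecreasing; I would close this step by verifying $K^{b,\mathbb P}\in \mathbb I^p_0$ using Assumption \ref{assumption:L}(iii) together with the integrability of $v,\nabla v,\Delta v$ evaluated along $X$.

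The main obstacle is the minimality condition of Definition \ref{def:2bsde}, namely $\essinf_{\mathbb P'\in\mathcal P_0(t,\mathbb P,\mathbb G_+)}\mathbb E^{\mathbb P'}[K^{b,\mathbb P'}_T - K^{b,\mathbb P'}_t\mid \mathcal G_t] = 0$. Here I would exploit that $k^b_s$ vanishes exactly at a maximizer of $\hat H^b(\cdot,\Gamma_s)$: assuming a measurable selector $a^\star(s):=a^\star(s,X_s,\Gamma_s)$ of that maximizer (which exists under the usual upper-semicontinuity and coercivity of $F^b$ in $a$ over the domain $D_H\ni 0$), I would build a minimizing sequence $\mathbb P^n\in\mathcal P_0(t,\mathbb P,\mathbb G_+)$ whose realized volatility $\hat a$ approximates $a^\star$ on $[t,T]$, so that $k^{b}\to 0$ and $\mathbb E^{\mathbb P^n}[K^{b,\mathbb P^n}_T - K^{b,\mathbb P^n}_t\mid\mathcal G_t]\to 0$. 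This is where the delicate measure-theoretic work lies, and it requires the family $\mathcal P_0$ to be rich enough to realize $a^\star$ — the analogue of the saturation/dominance hypotheses of \cite{soner2012wellposedness,possamai2018stochastic}.

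Finally, I would not re-derive uniqueness: it is inherited from Lemma \ref{thm:auxiliaryBSDE}, which already characterizes the unique solution of the auxiliary 2BSDE via $Y^b_t = \esssup_{\mathbb P'}\mathcal Y^{\mathbb P'}_t$, $Z^b_t=\int_0^t\hat a_s^{-1}d\langle Y^b,X^{c}\rangle_s$, and the explicit $K^{b,\mathbb P}$ representation. Any two solutions therefore share the same $Y^b$, hence the same $Z^b$ and $K^b$, so the verified triple $\big(v(\cdot,X),\nabla v(\cdot,X),K^b\big)$ is \emph{the} solution, which is exactly the assertion of the lemma.
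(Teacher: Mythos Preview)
The paper gives no proof of this lemma; it is simply cited as Theorem~5.3 of \cite{soner2012wellposedness} and used as a black box. Your verification argument --- applying It\^o's formula to $v(t,X_t)$, reading off $Z^b=\nabla v$, identifying $k^b$ as the nonnegative Fenchel--Young defect $F^b(\cdot,\hat a)+\hat H^b(\cdot,\Gamma)-\tfrac12\mathrm{Tr}(\hat a\,\Gamma)$, checking the minimality condition via a measurable selection of the maximizing volatility $a^\star$ in the definition of $\hat H^b$, and inheriting uniqueness from Lemma~\ref{thm:auxiliaryBSDE} --- is precisely the standard route of that cited reference and is correct. As a side remark, the expression you obtain for $k^b$ is the right one; the displayed formula $k^b_s=\hat H^b(\ldots,\Gamma_s)$ in the lemma statement appears to be a shorthand (or slip) for this Fenchel--Young slack.
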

\noindent We now turn to the PDE characterization of the value function as the equivalent of Theorem \ref{thm:decompbsde} in the Markovian setting. 

\begin{theorem}[Piecewise Feynman-Kac representation formula]
    The 2BSDE \eqref{2BSDEmarkov} admits a unique solution given by 
\[Y_t= v(t,X_t) \mathbf 1_{t<\tau} + g(\tau,X_\tau) \mathbf 1_{t\geq \tau},\, Z_t=\nabla v(t,X_t) \mathbf 1_{t<\tau},\]
\[K_t= K_t^{b}  \mathbf 1_{t<\tau},\; U_t=(g(t,X_t)-v(t,X_t))\mathbf 1_{t<\tau},\; t<T\]
where $K_t^b$ is given by Lemma \ref{lemmaFK} and $v$ is a classical solution to (PDE).
    
\end{theorem}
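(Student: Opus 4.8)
The plan is to specialize the two structural results of the non-Markovian theory — the decomposition Theorem~\ref{thm:decompbsde} for existence and Theorem~\ref{thm:lipscitz} for uniqueness — and to feed them the classical Feynman--Kac representation of the auxiliary $2$BSDE furnished by Lemma~\ref{lemmaFK}. The first task is to read off the Markovian form of the two building blocks of the decomposition. Since $\xi=g(T,X_T)$ is $\mathcal F_T$-measurable and the value credited at a default occurring at time $t$ is $g(t,X_t)$, the decomposition of the terminal condition yields $\xi^b=g(T,X_T)$ together with the $\mathbb F$-predictable process $\xi^a_t=g(t,X_t)$. This is exactly the identification under which the $u$-argument $g(s,X_s)-Y^b_s$ of the Markovian auxiliary $2$BSDE~\eqref{2BSDEb:eq:markov} coincides with the argument $\xi^a_s-Y^b_s$ of the general auxiliary $2$BSDE~\eqref{2BSDEb:eq}.

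Given a classical solution $v\in\mathcal C^{1,2}([0,T]\times\mathbb R^d)$ of (PDE), I would then invoke Lemma~\ref{lemmaFK} to obtain the unique solution of~\eqref{2BSDEb:eq:markov}, namely $Y^b_t=v(t,X_t)$, $Z^b_t=\nabla v(t,X_t)$ and $K^b_t=\int_0^t \hat H^b(s,X_s,Y^b_s,Z^b_s,g(s,X_s)-Y^b_s,\Delta v(s,X_s))\,ds$. Two structural simplifications proper to the Markovian setting should be recorded here: because $\mu^{\mathbb P}=0$ and $Y^b=v(\cdot,X_\cdot)$ is a smooth function of the martingale $X$, It\^o's formula identifies the entire martingale part of $Y^b$ with $\int\nabla v(s,X_s)\cdot dX_s$, so that the orthogonal martingale component $M^b$ vanishes; and the bi-conjugate representation of $\hat H^b$ forces $K^b$ to be non-decreasing and minimal, so that $(Y^b,Z^b,K^b)$ is a bona fide auxiliary-$2$BSDE solution to which the decomposition applies.

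With this solution in hand, I would apply Theorem~\ref{thm:decompbsde}, substituting $Y^b_t=v(t,X_t)$, $Z^b_t=\nabla v(t,X_t)$, $K^b$ as above and $\xi^a_t=g(t,X_t)$ into the decomposition~\eqref{2bsdedecomposition}. This produces a solution of~\eqref{2BSDEmarkov} of precisely the announced form, with $Y_t=v(t,X_t)\mathbf 1_{t<\tau}+g(\tau,X_\tau)\mathbf 1_{t\geq\tau}$, $Z_t=\nabla v(t,X_t)\mathbf 1_{t<\tau}$, $K_t=K^b_t\mathbf 1_{t<\tau}$ and $U_t=(g(t,X_t)-v(t,X_t))\mathbf 1_{t<\tau}$, the aggregation being licensed by $\mu^{\mathbb P}=0$. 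Uniqueness is then immediate from Theorem~\ref{thm:lipscitz}, since $f$ obeys the Lipschitz Assumption~\ref{assumption:L} and~\eqref{2BSDEmarkov} is merely the Markovian instance of~\eqref{2bsde:eq}.

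The main obstacle is not the logical assembly but the verification that the Feynman--Kac processes meet the integrability and measurability requirements of Definition~\ref{def:2bsde}: one must check that $v(\cdot,X_\cdot)$, $\nabla v(\cdot,X_\cdot)$ and $g(\cdot,X_\cdot)$ belong to $\mathbb S^p_0$ and $\mathbb H^p_0$ and that $U=(g-v)(\cdot,X_\cdot)\mathbf 1_{\cdot<\tau}$ belongs to $\mathbb J^\ell_0$, which ties the admissible growth of $v$ and $\nabla v$ to the integrability of $\lambda$ through the exponent $\ell=\frac{p(\hat p-1)}{\hat p}$. The second delicate point is that the $\mathbb F$-minimality of $K^b$ must transfer to $\mathbb G$-minimality for $K=K^b\mathbf 1_{\cdot<\tau}$; here the $\mathcal P_0$-density hypothesis is indispensable, as it immerses $\mathbb F$ into $\mathbb G$ and thereby preserves the essential-infimum minimality condition after restriction to $\{\cdot<\tau\}$, exactly as in the non-Markovian proof of Theorem~\ref{thm:decompbsde}.
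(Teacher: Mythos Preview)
Your proposal is correct and follows the same route as the paper: the paper's proof is the one-line ``direct consequence of Lemma~\ref{lemmaFK} together with Theorem~\ref{thm:decompbsde}'', and you have simply unpacked this by identifying $\xi^b=g(T,X_T)$, $\xi^a_t=g(t,X_t)$, feeding the Feynman--Kac processes from Lemma~\ref{lemmaFK} into the decomposition~\eqref{2bsdedecomposition}, and appending Theorem~\ref{thm:lipscitz} for uniqueness. The additional remarks on integrability and on the transfer of the minimality condition are sound elaborations but are already absorbed into the proofs of the cited results.
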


\begin{proof}
  This is a direct consequence of Lemma \ref{lemmaFK} together with Theorem \ref{thm:decompbsde}.
\end{proof}
\begin{remark}
    Note that (PDE) is not an integropartial PDE and the jumps and the erratic time $\tau$ do not play any role in the PDE itself to describe the solution to \eqref{2BSDEmarkov}. In particular, $Y_t,Z_t,U_t,K_t$ are not Markovian functions of $t,X_t$ since $\lambda$ may be a non-Markovian process. However, we rely on the Markovianity of the auxiliary 2BSDE \eqref{2BSDEb:eq:markov} to connect the solution to \eqref{2BSDEmarkov} with the fully nonlinear PDE (PDE). 
\end{remark}

\section{Application to stochastic control in erratic environments}
\label{sec:erraticcontrol}
This section applies the preceding results to stochastic control problems under weak formulations. We examine two types of controlled diffusion optimization: one where both the drift and volatility are controlled by the agent, and another where only the drift is controlled in the presence of volatility uncertainty. The concept of controlled volatility and weak formulation has been well understood and extensively studied in the literature, particularly following the influential work of \cite{cvitanic2018dynamic,zhang2017stochastic}, while the notion of volatility ambiguity has been investigated, for example, in \cite{bordigoni2007stochastic,nutz2012superhedging,bielecki2019adaptive}. This last case is particularly relevant for real-world applications, where modelers inevitably encounter uncertainty in parameter estimation. In this section, we extend this class of problems by adding uncertainty to the termination of the optimization problems studied. 
\subsection{A controlled model to unify volatility controlled and ambiguity}
\label{sec:controlmodel}
We define $\mathcal{A}$ and $\mathcal{B}$ as the sets of $\mathbb{F}$-adapted processes taking values in two subsets $A\subset\mathbb R^m$ and $B\subset [\varepsilon,\overline\beta]^m$, for $0<\varepsilon<\overline\beta$, for $m\geq 1$. We call \textit{control process} every pair $(\alpha, \beta) \in \mathcal{A} \times \mathcal{B}$. We will also denote, to make the notation lighter,  $\nu = (\alpha, \beta)$ and its respective set $\mathcal{V}$. To clarify the notations, for the rest of the paper:
\begin{itemize}
    \item in the volatility control setting, $\alpha$ and $\beta$ will be directly controlled by an agent, where $\beta$ will affect only the volatility part and $\alpha$ the drift;
    \item in the volatility ambiguity model, $\beta$ has the interpretation of a worst-case scenario, chosen by an exogenous actor, named the Nature, picking the worst volatility coefficient for the optimization problem.
\end{itemize}

We specify the drift and volatility coefficient as follows:
\[
\mu : [0,T] \times \Omega \times A \times B \longrightarrow \mathbb{R}^n, \quad \text{ such that $\mu(\cdot,a, b)$ is $ \ \mathbb{F}\text{-optional for any } (a, b) \in A\times B$ and bounded},
\]
\[
\sigma : [0,T] \times \Omega \times B \longrightarrow \mathcal{M}_{n \times d}(\mathbb{R}), \quad \text{ such that $\sigma(\cdot, b)$ is $ \ \mathbb{F}\text{-optional for any } b \in B$ and bounded}.
\]

We now recall the output process definition and the control problem. In order to make the construction suitable for the volatility ambiguity, differently from the seminal work of \cite{cvitanic2018dynamic} and more in line with the work of \cite{hernandez2019contract}, we start by defining the driftless SDE driven by the \( d \)-dimensional Brownian motion \( W \)
\begin{equation}
    X^{t,x,\beta}_s = x(t) + \int_t^s \sigma(r, X^{t,x,\beta}_{\cdot\wedge r}, \beta_r) dW_r, \quad s \in [t, T] \label{eq:Driftless SDE}
\end{equation}
\[
X^{t,x,\beta}_r = x(r),\; x\in \Omega \quad r \in [0, t],
\]
where $X_{\cdot\wedge t}$ denotes the path of $X$ up to time $t$. We now define a unified control model for both volatility controlled by an agent or ambiguity controlled by an exogenous player called the Nature as a set of weak solutions as follows. 
\begin{definition}
   We say that $(\mathbb{P}, \beta)$ is a weak solution for \eqref{eq:Driftless SDE} if the distribution of $X^{t,x, \beta}_t$ under $ \mathbb{P}$ is $\delta_{x(t)}$, where $\delta$ denotes the Dirac measure and there exists a $\mathbb P$-Brownian motion, denoted by $W^{\mathbb{P}}$, such that
\[
X_s = x(t) + \int_t^s \sigma(r, X_{\cdot\wedge r}, \beta_r) dW_r^{\mathbb{P}}, \quad s \in [t, T], \quad \mathbb{P}\text{-a.s.}
\]
We denote by
\[
    \mathcal{N}(t, x) := \left\{ (\mathbb{P}, \beta): \text{ weak solutions of \eqref{eq:Driftless SDE}}\right\}
\]
and by $\mathcal{P}(t, x)$ the set of probability measures such that there exists an $\F$-optional process $\beta$ such that the probability coupled with the process $\beta$ is a weak solution of \eqref{eq:Driftless SDE}.
\end{definition}

\begin{remark}
    Note that $\left\{ \mathcal{P}(t, x), \, t \in [0, T] \times \Omega \right \}$ is a saturated family of probability measures, see for example \cite{hernandez2019contract, cvitanic2018dynamic} and the references therein.
\end{remark} 

We now turn to the drift-controlled model. From now on, we denote by $(\mathcal{E}(Z_t))_{t \in [0,T]}$ the Doleans-Dade exponential of a continuous stochastic process $Z$ defined by 
\[\mathcal E(Z_t):= e^{Z_t-\frac12[Z,Z]_t}.\]

\begin{definition}[Admissible agent's effort]
    A control process $\alpha$ is said to be admissible, if for every $(\mathbb{P}, \beta) \in \mathcal{N}(t, x)$ the following process is an $(\mathbb{F}, \mathbb{P})$--martingale
    \[\mathcal E:=
    \left( \mathcal{E} \left( \int_0^t \sigma^\top(\sigma \sigma^\top)^{-1}(s, X_{\cdot\wedge s}, \beta_s) \mu(s, X_{\cdot\wedge s}, \alpha_s, \beta_s) \cdot dW^{\mathbb{P}}_s \right) \right)_{t \in [0,T]}.
    \]
    We denote by $\mathcal A$ the set of such admissible efforts.
\end{definition}
\begin{remark}
    Note that this set is not empty if we assume that $\sigma$ is such that $\sigma(\sigma\sigma^\top)^{-1}$ is bounded, for example if $n=d=1$ and $\sigma:[0,T]\times\mathbb R\times B\longrightarrow [\underline\sigma,\overline\sigma]$ with $0<\underline\sigma<\overline\sigma$. Alternatively, if we assume that $\mu$ can be decoupled multiplicatively as follows $\mu(s,x,\new{\alpha},\beta)=\lambda(s,x,\new{\alpha})\sigma(s,x,\beta)$ for some bounded function $\lambda$, the stochastic exponential in the definition of admissible effort becomes
    \[  
    \left( \mathcal{E} \left( \int_0^t \lambda(s, X_{\cdot\wedge s}, \alpha_s, \beta_s) \cdot dW^{\mathbb{P}}_s \right) \right)_{t \in [0,T]},
    \] and is directly a martingale, similarly to \cite{cvitanic2018dynamic}.
\end{remark}

Consider an admissible effort $\alpha \in \mathcal{A}$ and $(t, x) \in [0, T] \times \Omega$. For every subset $\mathcal{N} \subset \mathcal{N}(t, x)$ define
\[
\mathcal{N}^{\alpha} := \left\{ (\mathbb{P}^{\alpha}, \beta), \ \frac{d\mathbb{P}^{\alpha}}{d\mathbb{P}} = \mathcal{E} \left( \int_{t\wedge \tau}^{\T} \sigma^\top (\sigma \sigma^\top)^{-1}(s, X_{\cdot\wedge s}, \nu_s) \mu(s, X_{\cdot\wedge s}, \alpha_s, \beta_s) \cdot dW_s^{\mathbb{P}} \right), \ (\mathbb{P}, \beta) \in \mathcal{N} \right\}.
\]

By Girsanov’s Theorem we then deduce that for any $\alpha \in \mathcal{A}$, and for any $(\mathbb{P}^{\alpha}, \beta) \in \mathcal{N}^{\alpha}$, 
\[
X_s = x(t) + \int_t^s \mu(r, X_{\cdot\wedge r}, \alpha_r, \beta_r) dr + \int_t^s \sigma(r, X_{\cdot\wedge r}, \beta_r) dW_r^{\mathbb{P}^{\alpha}}, \quad s \in [t, T], \quad \mathbb{P}^{\alpha}\text{--a.s.}
\]

We point out that this construction enables us to unify the mathematical frameworks for both the controlled volatility \cite{cvitanic2018dynamic} and the ambiguity \cite{hernandez2019contract} cases.  Especially for the volatility ambiguity, the Nature fixes a diffusion coefficient first through a change of probability, selecting a family of probability measures. Then, within this family, the agent can choose its optimal effort which is changing the probability measure through a Girsanov transformation. In the case of volatility control, this is still a valid construction, identifying the Nature with the agent, equivalent to the ones proposed in \cite{cvitanic2018dynamic, denis2024second}.\\

\noindent We now turn to the optimization problems for both cases: volatility controlled and $sup\; sup$ optimization or ambiguity and $sup\; inf$ problem. We first define a discount factor by
\[
    \mathcal K_t^{\alpha, \beta} := \exp\left(- \int_0^t k(s,X_{\cdot\wedge s},\alpha_s, \beta_s) ds \right), \quad t \in [0,T],
\]
where $k$ is a bounded and $\F$-optional function. For any control $(\alpha;(\mathbb P^\alpha,\beta))\in \mathcal A\times \mathcal N^\alpha$, the objective function of the agent is thus given by
\[
     J(\alpha;\mathbb{P}^{\alpha}, \beta) =\E^{\mathbb{P}^{\alpha}}[ \mathcal K_{\T}^{\alpha, \beta}\Phi(\xi) - \int_0^{\T}  \mathcal K_t^{\alpha, \beta}c_t(\alpha_t,\beta_t) dt]
\]
where 
\begin{itemize}
\item $\xi$ is the liability the agent is facing as an $\mathcal G_{\T}-$measurable random variable, $\Phi$ is a utility function assumed to be continuous and non-decreasing such that $\|\Phi(\xi)\|_{L_{0,\hat\chi}^{2}}<\infty$.
\item $c:[0,T]\times \Omega\times A\times B\longrightarrow \mathbb R$ is a cost function possibly depending on the time, and the path of the process $X$ such that 

\[\sup_{\alpha\in \mathcal A}\, \sup_{(\mathbb P^\alpha,\beta)\in \mathcal N^\alpha} \mathbb E^{\mathbb P^\alpha}\Big[\int_0^{T} e^{-\hat\chi \Lambda_s}|c_s(\alpha_s,\beta_s)|^2\,ds\Big]<\infty.\]
\end{itemize}
Given this objective function, the two problems differ in their optimization routines.\\

\noindent \textbf{Agent as a volatility controller.} In the purely controlled case, we aim to maximize the expected utility over the two controls, volatility and drift simultaneously, that is to solve
    \begin{equation}\label{value+}
       \overline{V_0}:=\sup_{\alpha\in \mathcal A}\; \sup_{(\mathbb{P}^{\alpha}, \beta) \in \mathcal{N}^{\alpha}} J(\alpha;\mathbb{P}^{\alpha}, \beta)
    \end{equation}
\noindent \textbf{Nature as the volatility controller.} In a volatility ambiguity setting, we seek to optimize the worst-case scenario, tackling a problem of the form
  \begin{equation}\label{value-}
       \underline{V_0}:=\sup_{\alpha\in \mathcal A}\; \inf_{(\mathbb{P}^{\alpha}, \beta) \in \mathcal{N}^{\alpha}} J(\alpha;\mathbb{P}^{\alpha}, \beta)
\end{equation}
    We want to stress that this problem can be seen as the Nature being an adversarial opponent in a stochastic game, where the Nature picks its "strategy" $\beta$ to make our output process as bad as possible and we choose our strategy $\alpha$ to mitigate the output. This is a worst-case scenario or robust optimization framework.\\
    
\noindent We aim to prove that both problems can be linked to the solution of 2BSDE \eqref{2bsde:eq}. The idea is that, in both cases, we can represent the values of both problems $\overline{V_0},\underline{V_0}$ as the solution to a 2BSDE we studied in Section \ref{sec:2BSDE}.

\subsection{Supporting 2BSDEs and driver definitions}\label{sec:control2bsde}
In this section, we introduce the 2BSDEs related to the optimization problems above. The interpretation of the two cases is different, and even if the 2BSDEs look similar, with the exception of their driver, the volatility ambiguity case will require the extra assumption of the Isaacs condition from the seminal work in stochastic differential games of \cite{isaacs1999differential}. \\

We start by defining the driver $f$ for our 2BSDEs:
\[ 
    f : [0, T] \times \Omega \times \mathbb{R} \times \mathbb{R}^d \times \mathbb{R} \times A \times B \times \mathbb{R}^+ \rightarrow \mathbb{R} 
\]
defined by
\[
f(t, x, y, z, u, a, \beta, \lambda) := -k(t, x, b, b) y - c_t(x, a, b) + \mu(t, x, a, b) \cdot z + \lambda u.
\]

Define also for every \( (t, x, \Sigma) \in [0, T] \times \Omega \times \mathbb{S}_d^+ \) the set
\[
V_t(x, \Sigma) := \left\{ b \in B,\, \sigma(t, x, b)\sigma^\top(t, x, b) = \Sigma \right\},
\]
and denote by \( \mathbf{B}(\hat{\sigma}^2) \) the set of controls \( \beta \in \mathcal{B} \) with values in \( V_t(x, \hat{\sigma}_t^2) \), \( dt \otimes \mathbb{P} \)-a.e., where 
\[\hat\sigma_t:=\lim_{\varepsilon\downarrow 0} \frac{\langle X\rangle_t-\langle X\rangle_{t-\varepsilon}}{\varepsilon},\; t\in [0,T],\]
taking values in $\mathcal S_d^+$ and where $\langle X\rangle$ coincides with the quadratic variation of $X$ for every probability measure $\mathbb P\in \mathcal P(t,x)$ by \cite{karandikar1995pathwise}.\\

From this generator definition, we define two Hamiltonians, one associated with the volatility-controlled problem, and the other one associated with the volatility ambiguity problem.
\begin{itemize}
    \item \textbf{Agent as a volatility controller.} The Hamiltonian \( \overline F : [0, T] \times \Omega \times \mathbb{R} \times \mathbb{R}^d \times \mathbb{R} \times \mathcal{S}^+_d \rightarrow \mathbb{R} \) associated with the volatility controlled problem is defined by
        \[
            \overline{F}(t, x, y, z, u, \lambda,\Sigma) := \sup_{b \in V_t(x, \Sigma)} \sup_{a \in A} f(t, x, y, z, u, a, b, \lambda)
        \]
    \item \textbf{Nature as an adversarial volatility controller.} The Hamiltonian \( \underline F : [0, T] \times \Omega  \times \mathbb{R} \times \mathbb{R}^d \times \mathbb{R} \times \mathcal{S}^+_d \rightarrow \mathbb{R} \) associated with the volatility ambiguity problem is defined by
        \[
            \underline F(t, x, y, z, u, \lambda, \Sigma) := \inf_{b \in V_t(x, \Sigma)} \sup_{a \in A} f(t, x, y, z, u, a, b, \lambda).
        \]
\end{itemize}

Given the drivers, we can define the 2BSDEs that we will use equivalently to solve our control problems.
Consider the following 2BSDEs
\begin{align} \label{eq:2bsdecontr:+}
    \overline Y_t &= \Phi(\xi) + \int_{t\wedge \tau}^{\T} \overline F(s, X_s, \overline Y_s, \overline Z_s,\overline U_s,\lambda_s, \hat{\sigma}^2_s) ds - \int_{t\wedge \tau}^{\T} \overline Z_s \cdot dX_s  \\ \nonumber
    & + \int_{t\wedge \tau}^{\T} d\overline K_s -\int_{t\wedge \tau}^{\T}\overline U_s dH_s,\,\text{ for all }0\leq t\leq T,\; \mathbb P\text{-a.s.},\; \forall \mathbb P\in \mathcal P(t,x), 
\end{align}
and
\begin{align} \label{eq:2bsdecontr:-}
    \underline Y_t &= \Phi(\xi) + \int_{t\wedge \tau}^{\T} \underline F(s, X_s, \underline Y_s, \underline Z_s,\underline U_s,\lambda_s, \hat{\sigma}^2_s) ds - \int_{t\wedge \tau}^{\T} \underline Z_s \cdot dX_s  \\ \nonumber
    & - \int_{t\wedge \tau}^{\T} d\underline K_s -\int_{t\wedge \tau}^{\T}\underline U_s dH_s,\,\text{ for all }0\leq t\leq T,\; \mathbb P\text{-a.s.},\; \forall \mathbb P\in \mathcal P(t,x), 
\end{align}

\begin{remark}
  Note that we could define the 2BSDE with an orthogonal martingale $M$. However, the proof of our main results below would be then taken as a conditional expectation under a saturated probability measure under which $M$ is a martingale and therefore this part vanishes. For ease of notation and exposition, we will therefore ignore that term. 
\end{remark}
\begin{remark}
    Given the assumption made about the intensity of the default $\lambda$, the proof below is similar to \cite{cvitanic2018dynamic, hernandez2019contract, possamai2018stochastic, denis2024second}. This is because the aggregation of the process $K$ holds thanks to the aggregation of the process $U$, which is only possible when the intensity is independent of the probability measure, by using \cite{nutz2012superhedging}.  
\end{remark}

Since $k,\sigma,\mu$ are bounded and since $c$ does not depend on $y,z,u$, we have the following Lemma ensuring that the 2BSDEs \eqref{eq:2bsdecontr:+} and \eqref{eq:2bsdecontr:-} are well-posed in the sense of Assumption \ref{assumption:L}.
\begin{lemma}\label{lemmaL}
    For any 
\((t, x, y, z, u, \lambda, \Sigma) \in [0, T] \times \Omega \times \mathbb{R} \times \mathbb{R}^d \times \mathbb{R} \times \mathbb{R}^+ \times \mathcal S_d^+\), 
the maps $\overline F$ and $\underline F$ satisfy Assumption \ref{assumption:L}.
\end{lemma}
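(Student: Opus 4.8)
The plan is to verify the three items of Assumption \ref{assumption:L} directly from the affine structure of $f$ in $(y,z,u)$ and then transfer the estimates through the $\sup$ and $\inf$--$\sup$ operations defining $\overline F$ and $\underline F$. The elementary observation that drives everything is that for two families of real maps $(g_\theta)_\theta$ and $(h_\theta)_\theta$ one has $|\sup_\theta g_\theta - \sup_\theta h_\theta|\le \sup_\theta|g_\theta - h_\theta|$, and the same holds for $\inf_\theta$, hence for the composition $\inf_{b}\sup_{a}$. Since the constraint set $V_t(x,\Sigma)$ is identical on both sides of each inequality below (the last argument $\Sigma=\hat a$ being fixed), it suffices to establish estimates (i)--(ii) uniformly over the control values $(a,b)\in A\times V_t(x,\hat a)$ for the integrand $f(t,x,\cdot,\cdot,\cdot,a,b,\lambda)$; the resulting constants then pass verbatim to both $\overline F$ and $\underline F$.

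For (i), fix $(a,b)$ and use that $f$ is affine in $(y,z)$, so that $f(t,x,y,z,u,a,b,\lambda)-f(t,x,y',z',u,a,b,\lambda)=-k(t,x,a,b)(y-y')+\mu(t,x,a,b)\cdot(z-z')$. The $y$-term is bounded by $\|k\|_\infty|y-y'|$ since $k$ is bounded. The delicate point is the $z$-term, which must be controlled by the weighted norm $\|\hat a^{1/2}(z-z')\|$ rather than $|z-z'|$. Here I would exploit the defining property of $V_t(x,\hat a)$, namely $\sigma\sigma^\top(t,x,b)=\hat a$, to write $\mu\cdot(z-z')=(\hat a^{-1/2}\mu)^\top\,\hat a^{1/2}(z-z')$ and apply Cauchy--Schwarz, giving $|\mu\cdot(z-z')|\le \|\hat a^{-1/2}\mu\|\,\|\hat a^{1/2}(z-z')\|$. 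Since $\|\hat a^{-1/2}\mu\|^2=\mu^\top(\sigma\sigma^\top)^{-1}\mu=\|\sigma^\top(\sigma\sigma^\top)^{-1}\mu\|^2$, the boundedness of $\mu$ together with that of $\sigma^\top(\sigma\sigma^\top)^{-1}$ produces a uniform constant and the required bound. Item (ii) is immediate: $f$ is affine in $u$ with slope $\lambda$, so $f(\dots,u,\dots)-f(\dots,u',\dots)=\lambda(u-u')$, which gives exactly the form $C\lambda|u-u'|$ with $C=1$.

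It remains to check the integrability condition (iii). Evaluating at $(y,z,u)=(0,0,0)$ annihilates every linear term and leaves $f(t,x,0,0,0,a,b,\lambda)=-c_t(x,a)$, so that $\overline F(t,x,0,0,0,\lambda,\hat a)=\sup_{a\in A}\big(-c_t(x,a)\big)$, and analogously for $\underline F$, both independent of $b$ and $\lambda$. Hence $|\overline F(s,X_{\cdot\wedge s},0,0,0,\lambda_s,\hat a_s)|\le \sup_{a\in A}|c_s(X_{\cdot\wedge s},a)|$, and the claim reduces to the $L^p$-integrability of this pointwise supremum uniformly in $\mathbb P\in\mathcal P_0$. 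I expect this to be the main obstacle, since the standing cost hypothesis only bounds $\mathbb E^{\mathbb P^\alpha}[\int_0^T|c_s(\alpha_s,\beta_s)|^p\,ds]$ along admissible controls, not the pathwise supremum over all $a\in A$. I would close the gap by a measurable selection argument: using the continuity of $c$ and the compactness of $A$ to extract an $\mathbb F$-optional process $\alpha^\ast$ attaining $\sup_{a}|c_s(X_{\cdot\wedge s},a)|$, verifying its admissibility, and then invoking the cost assumption for $\alpha^\ast$ so that the quantity in (iii) is dominated by the finite bound in the standing hypothesis.
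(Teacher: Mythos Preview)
The paper does not actually prove this lemma: it simply asserts, in the sentence preceding the statement, that it follows ``since $k,\sigma,\mu$ are bounded and since $c$ does not depend on $y,z,u$.'' Your argument is therefore more detailed than anything the paper provides, and the route you take for items (i) and (ii) is the natural one. In particular, your observation for (i) that one must pass from $|\mu\cdot(z-z')|$ to the weighted norm $\|\hat a^{1/2}(z-z')\|$, and that this requires boundedness of $\sigma^\top(\sigma\sigma^\top)^{-1}\mu$, is exactly right; note that the paper only invokes this boundedness in a side remark (to ensure $\mathcal A\neq\emptyset$) rather than as an explicit standing hypothesis, so you are making precise something the paper leaves implicit.

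Your concern about (iii) is legitimate and is a genuine gap in the paper, not in your proposal. The paper's standing hypothesis on the cost is indeed only
\[
\sup_{\alpha\in\mathcal A}\,\sup_{(\mathbb P^\alpha,\beta)\in\mathcal N^\alpha}\mathbb E^{\mathbb P^\alpha}\Big[\int_0^T|c_s(\alpha_s,\beta_s)|^p\,ds\Big]<\infty,
\]
which does not immediately control $\sup_{a\in A}|c_s(X_{\cdot\wedge s},a)|$ pathwise. Your proposed fix via measurable selection is the right idea, but be aware that it needs compactness of $A$ and continuity of $c$ in $a$, neither of which the paper states explicitly (only $B\subset[\varepsilon,\overline b]^m$ is confined). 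The paper simply ignores this point; your write-up should flag these as additional mild assumptions rather than pretend they are already in force.
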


\subsection{The volatility control case: from full control to 2BSDE}
\label{sec:control}

In this section, we turn to the solution of the non-Markovian optimization \eqref{value+} and its connection with the 2BSDE \eqref{eq:2bsdecontr:+}.

\begin{theorem}
Let \((\overline Y, \overline Z, \overline U, \overline K)\) be the unique solution to \eqref{eq:2bsdecontr:+}. Then, the value function of the agent \eqref{value+} is given by
\[
\overline{V_0}(\xi) = \sup_{\alpha \in \mathcal{A}} \sup_{(\mathbb{P}, \beta) \in \mathcal{N}^\alpha} \mathbb{E}^\mathbb{P}[\overline Y_0].
\]

Moreover, \((\alpha^\star, \mathbb{P}^\star, \beta^\star) \) are optimal if and only if \((\alpha^\star, \mathbb{P}^\star, \beta^\star) \in \mathcal{A} \times \mathcal{N}^{\alpha^\star}\) such that:
\begin{itemize}
    \item[(i)] \((\alpha^\star, \beta^\star)\) attains the supremum in the definition of \(\overline F(\cdot, X, \overline Y, \overline Z, \overline U,\lambda,\hat\sigma^2) \), \(dt \otimes \mathbb{P}^\star\)-a.e.,
    \item[(ii)] \(\overline K_{\T} = 0\), \(\mathbb{P}^\star\)--a.s.
\end{itemize}
\end{theorem}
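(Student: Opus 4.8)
The plan is to establish the result via the \emph{martingale optimality principle}: I will show that for every admissible control the discounted, $\tau$-stopped process built from $\overline Y$ is a $\mathbb P^\alpha$-supermartingale, and that it becomes a true martingale exactly at the optimizer. Concretely, I first compute the dynamics of
\[
R^{\alpha,\beta}_t := \mathcal K^{\alpha,\beta}_{t\wedge\tau}\,\overline Y_{t\wedge\tau} - \int_0^{t\wedge\tau}\mathcal K^{\alpha,\beta}_s c_s(\alpha_s,\beta_s)\,ds
\]
under a fixed $(\mathbb P^\alpha,\beta)\in\mathcal N^\alpha$. Using the product rule (the discount factor has finite variation with $d\mathcal K^{\alpha,\beta}_t = -k(t,X,\alpha_t,\beta_t)\mathcal K^{\alpha,\beta}_t\,dt$), the Girsanov dynamics $dX_s=\mu(s,X,\alpha_s,\beta_s)\,ds+\sigma(s,X,\beta_s)\,dW^{\mathbb P^\alpha}_s$, and the compensation $dH_s = d\tilde H_s + \lambda_s\,ds$ on $\{s\le\tau\}$, the $dt$-drift of $R^{\alpha,\beta}$ reduces to
\[
\mathcal K^{\alpha,\beta}_t\Big[f\big(t,X,\overline Y_t,\overline Z_t,\overline U_t,\alpha_t,\beta_t,\lambda_t\big)-\overline F\big(t,X,\overline Y_t,\overline Z_t,\overline U_t,\lambda_t,\hat\sigma^2_t\big)\Big]\,dt - \mathcal K^{\alpha,\beta}_t\,d\overline K_t,
\]
where I used $f=-ky-c+\mu\cdot z+\lambda u$. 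Since $\beta$ takes values in $V_t(X,\hat\sigma^2_t)$ under $\mathbb P^\alpha$, the definition $\overline F=\sup_{b\in V_t}\sup_{a\in A}f$ forces the bracket to be $\le0$, while $-\mathcal K^{\alpha,\beta}_t\,d\overline K_t\le0$ because $\overline K$ is nondecreasing and $\mathcal K>0$. The stochastic integrals against $dW^{\mathbb P^\alpha}$ and $d\tilde H$ are genuine martingales thanks to $\overline Z\in\mathbb H^p$, $\overline U\in\mathbb J^\ell$ and boundedness of $\mathcal K$, so $R^{\alpha,\beta}$ is a $\mathbb P^\alpha$-supermartingale. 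Taking expectations between $0$ and $T\wedge\tau$, using $\overline Y_{T\wedge\tau}=\Phi(\xi)$, yields $\overline Y_0\ge J(\alpha;\mathbb P^\alpha,\beta)$ for every admissible control, hence $\overline Y_0\ge\overline V_0$.

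For the reverse inequality I would exploit the minimality condition on $\overline K$ together with a measurable selection. Fix $\varepsilon>0$. By a measurable selection theorem I choose $\mathbb F$-optional $(\alpha^\varepsilon,\beta^\varepsilon)$ attaining the supremum defining $\overline F(\cdot,X,\overline Y,\overline Z,\overline U,\lambda,\hat\sigma^2)$ up to $\varepsilon$, $dt\otimes\mathbb P$-a.e., so that the drift bracket exceeds $-C\varepsilon$; simultaneously the minimality condition
\[
0=\essinf_{\mathbb P'\in\mathcal P_0(0,\mathbb P,\mathbb G_+)}\mathbb E^{\mathbb P'}\big[\overline K_{T\wedge\tau}\,\big|\,\mathcal G_0\big]
\]
lets me pick the companion measure $\mathbb P^{\varepsilon}$ (equivalently a $\beta^\varepsilon$) under which $\mathbb E^{\mathbb P^\varepsilon}[\overline K_{T\wedge\tau}]\le\varepsilon$. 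Under this near-optimal control $R^{\alpha^\varepsilon,\beta^\varepsilon}$ is an \emph{approximate} martingale, so $\overline Y_0\le J(\alpha^\varepsilon;\mathbb P^{\varepsilon},\beta^\varepsilon)+C'\varepsilon\le\overline V_0+C'\varepsilon$; letting $\varepsilon\downarrow0$ gives $\overline Y_0\le\overline V_0$, and therefore $\overline V_0=\overline Y_0=\sup_\alpha\sup_{(\mathbb P,\beta)}\mathbb E^{\mathbb P}[\overline Y_0]$ (the last supremum being formal, as $\overline Y_0$ is $\mathcal G_0$-measurable).

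Finally, the optimality characterization follows by tracing the two inequalities to equalities: $(\alpha^\star,\mathbb P^\star,\beta^\star)$ is optimal if and only if $R^{\alpha^\star,\beta^\star}$ is a true $\mathbb P^\star$-martingale on $[0,T\wedge\tau]$, which by the nonpositive drift decomposition happens exactly when the drift bracket vanishes — i.e. $(\alpha^\star,\beta^\star)$ attains the supremum in $\overline F$, $dt\otimes\mathbb P^\star$-a.e. (condition (i)) — and when $\int_0^{\cdot}\mathcal K_s\,d\overline K_s\equiv0$, i.e. $\overline K_{T\wedge\tau}=0$, $\mathbb P^\star$-a.s. (condition (ii)). The main obstacle I anticipate is this reverse inequality: rigorously combining the measurable selection of near-optimal $(\alpha^\varepsilon,\beta^\varepsilon)$ with the essential-infimum minimality condition on the aggregated $\overline K$, in particular justifying that the two selections can be made jointly and that the resulting controls remain admissible (so the exponential $\mathcal E$ stays a true martingale). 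This is precisely where the $\mathcal P_0$-density assumption (ensuring $\lambda$, hence $\overline U$, aggregates) and the dynamic programming principle of Proposition \ref{DPP} do the essential work.
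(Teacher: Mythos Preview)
Your martingale-optimality argument is correct in spirit and is precisely the route the paper alludes to when it points to an extension of \cite[Proposition~5.4]{cvitanic2018dynamic}. However, the detailed proof the paper actually writes out (for the analogous ambiguity Theorem~\ref{thm:ambiguity}, to which the present theorem is reduced) proceeds differently: it introduces, for each fixed $\alpha$ and each fixed $(\alpha,\beta)$, the auxiliary 2BSDEs \eqref{eq:2bsdeambinf}--\eqref{eq:2bsdeabonly} and the linear BSDE \eqref{bsde:ab}, invokes the dynamic programming representation of Proposition~\ref{DPP} to write $Y^{\alpha}_0$ and $Y^{\alpha,\beta}_0$ as essential suprema/infima of BSDE solutions $\mathcal Y^{\mathbb P',\alpha,\beta}_0$, applies the comparison Theorem~\ref{comparaisontheorem} to order them, and finally linearizes the BSDE via a Girsanov change of measure to identify $\mathcal Y^{\mathbb P',\alpha,\beta}_0$ with the objective $J(\alpha;\mathbb P'_\alpha,\beta)$.

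The trade-off is exactly the one you anticipated. Your direct approach is more transparent for the upper bound $\overline Y_0\ge J(\alpha;\mathbb P^\alpha,\beta)$, but the reverse inequality forces you to \emph{couple} two selections: the near-optimal $(\alpha^\varepsilon,\beta^\varepsilon)$ attaining the pointwise supremum in $\overline F$ (with $\beta^\varepsilon$ constrained to $V_t(X,\hat\sigma^2_t)$ for a \emph{given} $\mathbb P$) and the near-minimizing measure $\mathbb P^\varepsilon$ from the minimality condition on $\overline K$. These operate on different layers---$\mathbb P^\varepsilon$ fixes $\hat\sigma^2$, while $\beta^\varepsilon$ ranges only inside the fiber $V_t(X,\hat\sigma^2_t)$---so the order matters and ``simultaneously'' is not quite right; you must first pick $\mathbb P^\varepsilon$, then select $(\alpha^\varepsilon,\beta^\varepsilon)$ under it, and check admissibility of the resulting $\alpha^\varepsilon$. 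The paper's DPP/comparison route sidesteps this coupling entirely: the chain $\overline Y_0=\esssup_\alpha Y^\alpha_0=\esssup_\alpha\esssup_\beta Y^{\alpha,\beta}_0=\esssup_\alpha\esssup_\beta\esssup_{\mathbb P'}\mathcal Y^{\mathbb P',\alpha,\beta}_0$ is a sequence of equalities obtained by comparison, and the final identification with $J$ is a straightforward linearization of a Lipschitz BSDE. Both approaches yield the same optimality conditions~(i)--(ii), since in either case optimality amounts to all the inequalities (your nonpositive drift and $-d\overline K$, or the paper's comparison and ess-sup) collapsing to equalities.
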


The existence of a unique solution to the 2BSDE \eqref{eq:2bsdecontr:+} is a direct consequence of the assumptions made on $k,\sigma,\mu,c$ and Theorem \ref{thm:decompbsde} and Theorem \ref{thm:lipschitz}. We omit the proof regarding the optimizers of the problem since it follows the same lines as the one of the ambiguity case below and extend \cite{cvitanic2018dynamic} to the erratic horizon $\tau\wedge T$. We refer to either an extension to \cite[Proposition 5.4]{cvitanic2018dynamic} or the proof of Theorem \ref{thm:ambiguity} below. 

\subsection{The volatility ambiguity case: from adversarial nature to 2BSDE}
\label{Section:ambiguity}

We now turn to the case where the agent is facing an adversarial player, the Nature, regarding the volatility control. Similarly to the previous section, we aim at connecting the robust optimization \eqref{value-} with the 2BSDE \eqref{eq:2bsdecontr:-}. Note that this is a worst-case scenario framework and its link to 2BSDE has been previously studied in \cite{hernandez2019contract,mastrolia2025agency}. We extend this result to the erratic horizon $T\wedge \tau$. In contrast to the previous case, where boundedness alone ensured well-posedness, the ambiguity setting introduces the additional difficulty of a potentially unbounded sup inf problem, where the second sub-problem becomes $-\infty$ once the worst strategy of the Nature is considered. To overcome this technical difficulty, we will add another assumption ensuring the existence of an optimizer, known as Isaacs condition, from the seminal work of \cite{isaacs1999differential}. This assumption has been also made in the literature regarding zero-sum games in weak formulation, see for example \cite{possamai2020zero,hernandez2019contract}. We assume that the following condition is satisfied.
\begin{assumption}\label{assumption:isaac}
For any $(t, x, y, z, u, \lambda, \Sigma) \in [0, T] \times \Omega \times \mathbb{R} \times \mathbb{R}^d \times \mathbb{R} \times \mathbb{R}^+ \times \mathcal{S}^+_d$

\[
\inf_{b \in V_t(x, \Sigma)} \sup_{a \in A} f(t, x, y, z, u, a,b, \lambda)
=
\sup_{a \in A} \inf_{b \in V_t(x, \Sigma)} f(t, x, y, z, u, a, b, \lambda).
\]    
\end{assumption}

    We refer to \cite[Section 4]{hernandez2019contract} for explicit examples satisfying this condition.

\begin{theorem}\label{thm:ambiguity}
Let Assumption \ref{assumption:isaac} be true. Let \((\underline Y, \underline Z, \underline U, \underline K)\) be the unique solution to \eqref{eq:2bsdecontr:-}. Then, the value function of the agent \eqref{value-} is given by
\[
\underline{V_0}(\xi) = \sup_{\alpha \in \mathcal{A}} \inf_{(\mathbb{P}, \beta) \in \mathcal{N}^\alpha} \mathbb{E}^\mathbb{P}[\underline Y_0].
\]

Moreover, \((\alpha^\star, \mathbb{P}^\star, \beta^\star) \) are optimal if and only if \((\alpha^\star, \mathbb{P}^\star, \beta^\star) \in \mathcal{A} \times \mathcal{N}^{\alpha^\star}\) such that:
\begin{itemize}
    \item[(i)] \((\alpha^\star, \beta^\star)\) attains the saddle point sup inf in the definition of \(\underline F(\cdot, X, \underline Y, \underline Z, \underline U,\lambda,\hat\sigma^2) \), \(dt \otimes \mathbb{P}^\star\)--a.e.,
    \item[(ii)] \(\underline K_{\T} = 0\), \(\mathbb{P}^\star\)--a.s.
\end{itemize}
\end{theorem}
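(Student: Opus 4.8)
Since $\underline Y_0$ is $\mathcal P_0$-q.s. a constant, the right-hand side equals $\underline Y_0$, so the claim reduces to the identity $\underline{V_0}(\xi)=\underline Y_0$ together with the stated optimality characterization. The plan is to prove this by a weak-formulation martingale optimality principle stopped at the erratic horizon $\T$, adapting \cite{cvitanic2018dynamic,hernandez2019contract} and using the representation $\underline Y_t=\essinf_{\mathbb P'}\mathcal Y_t^{\mathbb P',\tau}$ of Remark \ref{rkDPP}, where $\mathcal Y^{\mathbb P',\tau}$ solves \eqref{BSDE:eq:tau} with generator $\underline F$. Fixing an admissible $\alpha$ and $(\mathbb P,\beta)\in\mathcal N^\alpha$, I would introduce
\[
\mathcal S_t^{\alpha,\beta}:=\mathcal K_{t\wedge\tau}^{\alpha,\beta}\,\underline Y_{t\wedge\tau}-\int_0^{t\wedge\tau}\mathcal K_s^{\alpha,\beta}\,c_s(\alpha_s,\beta_s)\,ds,
\]
and apply It\^o's formula under $\mathbb P^\alpha$. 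Using the Girsanov decomposition $dX_s=\mu(s,X,\alpha_s,\beta_s)\,ds+dX_s^{c,\mathbb P^\alpha}$, the compensation $dH_s=\lambda_s\,ds+d\tilde H_s$ on $\{s\le\tau\}$, and the explicit driver $f=-ky-c+\mu\cdot z+\lambda u$, one finds $\mathcal S_0^{\alpha,\beta}=\underline Y_0$, $\;\mathbb E^{\mathbb P^\alpha}[\mathcal S_{\T}^{\alpha,\beta}]=J(\alpha;\mathbb P^\alpha,\beta)$ (because $\underline Y_{\T}=\Phi(\xi)$), and a finite-variation density equal to
\[
\mathcal K_t^{\alpha,\beta}\big(f(t,X,\underline Y_t,\underline Z_t,\underline U_t,\alpha_t,\beta_t,\lambda_t)-\underline F(t,X,\underline Y_t,\underline Z_t,\underline U_t,\lambda_t,\hat\sigma_t^2)\big)\,dt+\mathcal K_t^{\alpha,\beta}\,d\underline K_t .
\]
The integrability required to upgrade the local martingale parts (including the jump term $\underline U\,d\tilde H$) to true martingales is inherited from the spaces $\mathbb S^p,\mathbb H^p,\mathbb J^\ell,\mathbb M^p$ and the $L^{\hat p}$-bound on $\lambda$ of Section \ref{sec:math}, via the decomposition of Lemma \ref{lemma:bsde}.

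For $\underline Y_0\le\underline{V_0}(\xi)$, I would use Assumption \ref{assumption:isaac} to select, by a measurable-selection argument, an admissible $\alpha^\star$ realizing the outer supremum in $\underline F=\sup_a\inf_b f$; boundedness of $\mu$ and of $\sigma^\top(\sigma\sigma^\top)^{-1}$ ensures the corresponding Girsanov density is a true martingale, i.e. $\alpha^\star\in\mathcal A$. Since $\beta_t\in V_t(X,\hat\sigma_t^2)$ holds $\mathbb P$-a.s. for every weak solution, the Isaacs identity gives $f(t,X,\cdot,\alpha^\star_t,\beta_t,\lambda_t)\ge\inf_b f(t,X,\cdot,\alpha^\star_t,b,\lambda_t)=\underline F_t$ for all $\beta$, so the density above is nonnegative; as $\underline K$ is nondecreasing, $\mathcal S^{\alpha^\star,\beta}$ is a $\mathbb P^{\alpha^\star}$-submartingale and $\underline Y_0\le J(\alpha^\star;\mathbb P^{\alpha^\star},\beta)$ for all $\beta$. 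Taking the infimum over $(\mathbb P,\beta)$ and then the supremum over $\alpha$ yields $\underline Y_0\le\underline{V_0}(\xi)$.

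For the reverse inequality I would fix an arbitrary admissible $\alpha$ and exploit $\underline Y_0=\essinf_{\mathbb P'}\mathcal Y_0^{\mathbb P',\tau}$ jointly with the minimality condition on $\underline K$: for each $\varepsilon>0$ there is a measure $\mathbb P'$, with volatility $\beta'$ attaining the inner infimum, along which $\mathbb E^{(\mathbb P')^{\alpha}}[\underline K_{\T}]\le\varepsilon$ and $\sup_a f(t,X,\cdot,a,\beta'_t)=\underline F_t$, whence $f(t,X,\cdot,\alpha_t,\beta'_t)\le\underline F_t$. Along such $(\mathbb P',\beta')$ the density is nonpositive up to the asymptotically vanishing $\underline K$-term, so $\mathcal S^{\alpha,\beta'}$ is an approximate $(\mathbb P')^{\alpha}$-supermartingale, giving $J(\alpha;(\mathbb P')^{\alpha},\beta')\le\underline Y_0+C\varepsilon$; letting $\varepsilon\to0$ and maximizing over $\alpha$ gives $\underline{V_0}(\xi)\le\underline Y_0$. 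Inspecting when both chains of inequalities are equalities then yields the characterization: $(\alpha^\star,\mathbb P^\star,\beta^\star)$ is optimal iff the density vanishes $dt\otimes\mathbb P^\star$-a.e., i.e. $(\alpha^\star,\beta^\star)$ attains the $\sup_a\inf_b$ saddle point of $f$ in $\underline F$, and $\underline K_{\T}=0$ $\mathbb P^\star$-a.s., so that $\mathcal S^{\alpha^\star,\beta^\star}$ is a genuine $\mathbb P^\star$-martingale.

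The step I expect to be the main obstacle is the reverse inequality, which amounts to the dynamic minimax interchange $\essinf_{\mathbb P'}\sup_a=\sup_a\essinf_{\mathbb P'}$ at the level of the value processes. Lifting the pointwise Isaacs equality to the stochastic problem requires constructing near-optimal adversarial volatilities while simultaneously forcing the nondecreasing process $\underline K$ to vanish through its minimality condition, and controlling this uniformly in the presence of the singular jump at $\tau$; the attendant measurable selection of $(\alpha^\star,\beta^\star)$ and the verification that the candidate optimal Girsanov density is a true (not merely local) martingale are the remaining technical hurdles.
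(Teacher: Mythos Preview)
Your argument is correct in outline but follows a genuinely different route from the paper. The paper does not apply It\^o's formula to $\mathcal K^{\alpha,\beta}\underline Y$ at all; instead it introduces two layers of auxiliary equations, a 2BSDE $Y^{\alpha}$ with driver $\inf_{\beta}f(\cdot,\alpha,\beta)$ and a 2BSDE $Y^{\alpha,\beta}$ with driver $f(\cdot,\alpha,\beta)$, together with the single-$\mathbb P'$ BSDE $\mathcal Y^{\mathbb P',\alpha,\beta}$, and then chains the comparison theorem (Theorem~\ref{comparaisontheorem}) with the DPP/essential-infimum representation of Remark~\ref{rkDPP} to obtain
\[
\underline Y_0=\esssup_\alpha Y_0^{\alpha}=\esssup_\alpha\essinf_\beta Y_0^{\alpha,\beta}=\esssup_\alpha\essinf_\beta\essinf_{\mathbb P'}\mathcal Y_0^{\mathbb P',\alpha,\beta},
\]
after which a linearization of the linear BSDE $\mathcal Y^{\mathbb P',\alpha,\beta}$ identifies the last quantity with $J(\alpha;\mathbb P',\beta)$. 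Your verification approach is more elementary in that it avoids the auxiliary 2BSDEs and the comparison theorem, producing the sub/supermartingale bounds directly from the sign of $f-\underline F$ and of $d\underline K$; the paper's approach, on the other hand, is more modular and absorbs the delicate reverse inequality (your ``main obstacle'') into the already-proved DPP representation rather than into an $\varepsilon$-optimal construction. One caveat: your opening reduction ``$\underline Y_0$ is $\mathcal P_0$-q.s.\ a constant, so the right-hand side equals $\underline Y_0$'' is not argued in the paper and need not hold in the $\mathbb G^{\mathcal P_0+}$-framework used here; the paper keeps the outer $\sup_\alpha\inf_{(\mathbb P,\beta)}\mathbb E^{\mathbb P}[\,\cdot\,]$ throughout, which is safer.
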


\begin{proof}

From Lemma \ref{lemmaL} together with Theorem~\ref{thm:decompbsde} and Theorem \ref{thm:lipschitz} we deduce that there exists a unique solution \((\underline Y, \underline Z, \underline U, \underline K)\)  to \eqref{eq:2bsdecontr:-}. We define for each $\alpha\in \mathcal A$ the following 2BSDE with erratic horizon

    \begin{align}\label{eq:2bsdeambinf}
        Y^\alpha_t = \Phi(\xi) &+ \int_{t\wedge \tau}^{\T} \inf_{\beta\in V_t(X_{\cdot\wedge t},\hat\sigma_t^2)}f(s, X_s, Y^\alpha_s, Z^\alpha_s,U_s^\alpha,\alpha_s, \beta_s,\lambda_s) ds - \int_{t\wedge \tau}^{\T} Z^\alpha_s \cdot dX_s  \nonumber \\ 
        & - \int_{t\wedge \tau}^{\T} dK^\alpha_s -\int_{t\wedge \tau}^{\T}U^\alpha_s dH_s,\; \mathbb P\text{-a.s.},\; \mathbb P\in \mathcal P(t,x).  
    \end{align}
 Similarly, we consider the 2BSDE when neither $\alpha$ nor $\beta$ are controlled:

     \begin{align}\label{eq:2bsdeabonly}
        Y^{\alpha,\beta}_t = \Phi(\xi) &+ \int_{t\wedge \tau}^{\T} f(s, X_s, Y^{\alpha,\beta}_s, Z^{\alpha,\beta}_s,U_s^{\alpha,\beta},\alpha_s, \beta_s,\lambda_s) ds - \int_{t\wedge \tau}^{\T} Z^{\alpha,\beta}_s \cdot dX_s  \nonumber \\ 
        & - \int_{t\wedge \tau}^{\T} dK^{\alpha,\beta}_s -\int_{t\wedge \tau}^{\T}U^{\alpha,\beta}_s dH_s,\; \mathbb P\text{-a.s.},\; \mathbb P\in \mathcal P(t,x).  
    \end{align}

We finally denote by $( \mathcal{Y}^{\alpha, \beta}, \mathcal{Z}^{\alpha, \beta}, \mathcal{U}^{\alpha, \beta})$ the unique solutions to the Lipschitz BSDE under some probability $\mathbb P'\in \mathcal P_0$ 
    \begin{align}
\nonumber
        \mathcal{Y}^{\mathbb P';\alpha, \beta}_t& = \Phi(\xi) + \int_{t\wedge \tau}^{\T} f(s, X_s, \mathcal{Y}^{\mathbb P';\alpha, \beta}_s, \mathcal{Z}^{\mathbb P';\alpha, \beta}_s, U_s^{\mathbb P';\alpha, \beta},\alpha_s, \beta_s,\lambda_s) ds\\
            \label{bsde:ab} &- \int_{t\wedge \tau}^{\T} \mathcal{Z}^{\mathbb P';\alpha, \beta}_s \cdot dX_s  -\int_{t\wedge \tau}^{\T} \mathcal{U}^{\mathbb P';\alpha, \beta}_s dH_s.
    \end{align}
From the dynamic programming principle Proposition \ref{DPP} together with Remark \ref{rkDPP}, we get
    \[
        Y^{\alpha}_0 = \text{ess inf}^{\mathbb P}_{\mathbb{P}' \in \mathcal{P}(0,\mathbb{P}, \mathbb{G})} \mathcal{Y}_0^{\mathbb{P}', \alpha}, \quad \mathbb{P}\text{-a.s. for all } \mathbb{P} \in \mathcal{P}_0.
    \]

    and
    \[
        Y^{\alpha, \beta}_0 = \text{ess inf}^{\mathbb P}_{\mathbb{P}' \in \mathcal{P}(0,\mathbb{P}, \mathbb{G})} \mathcal{Y}_0^{\mathbb{P}', \alpha, \beta}, \quad \mathbb{P}\text{-a.s. for all } \mathbb{P} \in \mathcal{P}_0.
    \]

  From the comparison theorem \ref{comparaisontheorem}, we then deduce that for any $\p\in \mathcal P_0$
    \begin{align*}
        \underline Y_0 
        &= \text{ess sup}^{\mathbb P}_{\alpha \in \mathcal{A}} Y_0^{\alpha} =  \text{ess sup}^{\mathbb P}_{\alpha \in \mathcal{A}} \text{ess inf}^{\mathbb P}_{\beta \in {V_t(X_{\cdot\wedge t},\hat{\sigma}_t^2)}} Y_0^{\alpha, \beta}\\
        &= \text{ess sup}^{\mathbb P}_{\alpha \in \mathcal{A}} \text{ess inf}^{\mathbb P}_{\beta \in {V_t(X_{\cdot\wedge t},\hat{\sigma}_t^2)}} \text{ess inf}_{\mathbb{P}' \in \mathcal{P}(0,\mathbb P,\mathbb{G})}^{\mathbb{P}} \mathcal{Y}_0^{\mathbb{P'}, \alpha, \beta}\\
        &=\text{ess sup}^{\mathbb P}_{\alpha \in \mathcal{A}} \text{ess inf}^{\mathbb P}_{\beta \in {V_t(X_{\cdot\wedge t},\hat{\sigma}_t^2)}} \text{ess inf}_{\mathbb{P}' \in \mathcal{P}(0,\mathbb P,\mathbb{G})}^{\mathbb{P}} \mathbb E^{\mathbb P'_\alpha}[ \mathcal K_{\T}^{\alpha, \beta}\Phi(\xi) - \int_0^{\T}  \mathcal K_t^{\alpha, \beta}c_t(\alpha_t,\beta_t) dt],
    \end{align*}
where we have used a linearization technique for the last equality with $\mathbb P'_\alpha$ defined by 
 \[
        \frac{d\mathbb{P}'_{\alpha}}{d\mathbb{P}'} = \mathcal{E} \left( \int_{.}^T \sigma^\top (\sigma \sigma^\top)^{-1}(s, X, \beta_s) \mu(s, X, \alpha_s, \nu_s) \cdot dW_s^{\mathbb{P}} + \int_{.}^T U_s dH_s\right).
    \]
    Consequently and taking a supremum on $(\alpha;\mathbb P,\beta)\in \mathcal A\times\mathcal N^\alpha$ we get

    \[
\underline{V_0}(\xi) = \sup_{\alpha \in \mathcal{A}} \inf_{(\mathbb{P}, \beta) \in \mathcal{N}^\alpha} \mathbb{E}^\mathbb{P}[\underline Y_0].
\] Finally, going back to the previous steps,  $(\alpha^\star,\mathbb P^\star,\beta^\star)$ is optimal if and only if all the essential suprema and infima above are attained, that is if and only if $(\alpha^\star,\beta^\star)$ is a saddle point and satisfies Isaacs condition, and $\mathbb P^\star$ is such that 
\[\mathbb E^{\mathbb P^\star}[\underline Y_0]= \mathcal Y^{\mathbb P^\star,\alpha^\star,\beta^\star}_0,\] that is $\underline K_{\T} = 0,\, \mathbb{P}^\star-a.s.$
\end{proof}

\newpage

\bibliographystyle{plain}
\small
\bibliography{biblio}

\newpage
\appendix

\section{Quadratic 2BSDE for CARA utility maximization under
ambiguity and erratic horizon}

\label{appendix:cara}

We propose an extension of this study to CARA utility function beyond the separable case studied above.  For the sake of consistency with control in erratic environment terminology we focus on the ambiguity case. Throughout this
subsection we work under the standing assumptions of
Section~\ref{sec:controlmodel} and we add the following bounded-data
assumption. We enforces the boundedness of the terminal condition $\xi$ and the cost function $c$ with the following assumption.

\begin{assumption}\label{assumption:CARA-bounded}
$\xi$ is bounded $\mathbb P\text{-a.s.}$ for every $\mathbb P\in\mathcal P_0$; the running cost $c$ is bounded
on $[0,T]\times\Omega\times A\times B$ with $\|c\|_\infty\le \overline c$;
the discount $k$, the drift $\mu$, and the volatility $\sigma$ are
bounded as in Section~\ref{sec:controlmodel}; the
$\mathcal P_0$-density Hypothesis~\ref{densityhyp} and the Isaacs
condition (Assumption~\ref{assumption:isaac}) hold; and the
$\hat p$-integrability of $\lambda$ from Section~\ref{sec:math}
holds. 
\end{assumption}

For a risk aversion parameter $\eta>0$ and for an admissible
effort $\alpha\in\mathcal A$, define the conditional CARA objective function
\[
J_t(\alpha;\mathbb P^\alpha,\beta)
:= -\mathbb E^{\mathbb P^\alpha}\!\left[\exp\!\left(-\eta\Big(\xi-\int_{t\wedge\tau}^{T\wedge\tau}c_s(\alpha_s,\beta_s)\,ds\Big)\right)\bigg|\mathcal G_t\right].
\]

In this section, we are interested to solve the following worst-case optimal control with erratic horizon:

\begin{equation}\label{eq:CARAOpt}
V_0^A(\xi)
:= \mathrm{sup}_{\alpha\in\mathcal A}\,
\mathrm{ess\,inf}_{(\mathbb P^\alpha,\beta)\in\mathcal N^\alpha}
J_0(\alpha;\mathbb P^\alpha,\beta),
\end{equation}
together with the dynamic version of the value function
\[
V_t^A(\xi)
:= \mathrm{ess\,sup}_{\alpha\in\mathcal A}\,
\mathrm{ess\,inf}_{(\mathbb P^\alpha,\beta)\in\mathcal N^\alpha}
J_t(\alpha;\mathbb P^\alpha,\beta),\qquad 0\le t\le T.
\]
We define
\[
\;Y_t := -\tfrac{1}{\eta}\log\!\left(-V_t^A(\xi)\right),\qquad 0\le t\le T\wedge\tau.
\]
\begin{remark}\label{rem:bounded}
Note that the boundedness of $\xi$ and~$c$ in
Assumption~\ref{assumption:CARA-bounded} guarantees
\[
-\,e^{\eta(\|\xi\|_\infty+T\|c\|_\infty)}\;\le\;V_t^A(\xi)\;\le\;-\,e^{-\eta(\|\xi\|_\infty+T\|c\|_\infty)}\;<\;0,
\]
so $Y$ is well-defined and bounded.
\end{remark}

Define, for $(t,x,y,z,u,\lambda,\Sigma)\in[0,T]\times\Omega\times\mathbb R\times\mathbb R^d\times\mathbb R\times\mathbb R^+\times\mathcal S_d^+$,
\begin{equation}\label{eq:CARA-Hamiltonian}
\underline F^{\eta}(t,x,y,z,u,\lambda,\Sigma)
:= \inf_{\beta\in V_t(x,\Sigma)}\sup_{\alpha\in A}
\Big\{\,
-c_t(\alpha,\beta) + \mu(t,x,\alpha,\beta)\cdot z
+ \tfrac{\eta}{2}\,\|\Sigma^{1/2} z\|^2
+ \lambda\,\tfrac{e^{-\eta u}-1}{\eta}
\,\Big\}.
\end{equation}
\begin{remark}
    Unlike the classical framework of \cite{rouge2000pricing, hu2005utility}, the extension to the worst-case approach combined with the erratic horizon leads to an inf-sup optimization with the intensity of the default given by the $\lambda$ term. 
\end{remark}

We introduce the following 2BSDE with erratic horizon, whose
generator is the CARA Hamiltonian \eqref{eq:CARA-Hamiltonian} previously defined:
\begin{align}\label{eq:CARA-2bsde}
Y_t \;=\;\xi
&-\int_{t\wedge\tau}^{T\wedge\tau}\underline F^\eta(s,X,Y_s,Z_s,U_s,\lambda_s,\hat\sigma_s^2)\,ds
-\int_{t\wedge\tau}^{T\wedge\tau} Z_s\cdot dX_s^{c,\mathbb P}
-\int_{t\wedge\tau}^{T\wedge\tau} dM_s^{\mathbb P}\\[-2pt]
\nonumber
&+(K_{T\wedge\tau}^{\mathbb P}-K_{t\wedge\tau}^{\mathbb P})
-\int_{t\wedge\tau}^{T\wedge\tau} U_s\,dH_s,
\qquad \forall\,t\in[0,T],\;\mathbb P\text{-a.s.},\;\forall\,\mathbb P\in\mathcal P(t,x).
\end{align}

\begin{proposition}
\label{prop:CARA-existence}
Under Assumption~\ref{assumption:CARA-bounded},
2BSDE~\eqref{eq:CARA-2bsde} admits a unique solution
$(Y^\eta,Z^\eta,U^\eta,(K^{\eta,\mathbb P}),(M^{\eta,\mathbb P}))$
in the sense of Definition~\ref{def:2bsde} with bounded $Y^\eta$.
\end{proposition}

\begin{proof}
We use the theory of BMO martingale with quadratic BSDE, see for example
\cite{briand2008quadratic,morlais2009quadratic,kazi2015second} to the
present erratic-horizon setting in four steps.  First, we introduce the truncated generator corresponding 2BSDE with solution $(Y^n,Z^n,U^n,K^n)$ recovering the Lipschitz setting studied above. Then, we show that the solutions $Y^n,U^n$ are uniformly bounded in $n$ together with the BMO-norm of $Z^n$. Finally, we take the limit showing the existence of the 2BSDE together with the uniqueness. for the rest of this proof, we set
$\overline R := \|\xi\|_\infty+T\|c\|_\infty$.

\emph{Step 1.}  For each $n\ge 1$ define the
truncation operator
\[T_n(z,u):=(\,n\wedge\|z\|\cdot z/\|z\|,\; (-n)\vee(u\wedge n)\,)\]
componentwise on $z$, with the convention $z/\|z\|:=0$ at $z=0$,
and let
\[\underline F^{\eta,n}(t,x,y,z,u,\lambda,\Sigma):=\underline F^\eta(t,x,y,T_n(z,u),\lambda,\Sigma).\]
For each $\Sigma\in\mathcal S_d^+$ the map
$(y,z,u)\mapsto \underline F^{\eta,n}$ is Lipschitz with constant
$C_n:=L_0 + \eta n\,\overline\Sigma + \overline\lambda\,e^{\eta n}$,
where $L_0$ is the Lipschitz constant of the linear part
($-c+\mu\cdot z$) and $\overline\Sigma$ bounds $\|\Sigma^{1/2}\|^2$.  Hence
$\underline F^{\eta,n}$ satisfies Assumption~\ref{assumption:L} with
constant $C_n$ for every fixed $n$.  By Theorem~\ref{thm:lipschitz}
the Lipschitz 2BSDE
\begin{equation}\label{eq:CARA-truncated}
\begin{aligned}
Y^{n}_t \;=\;\xi
&-\int_{t\wedge\tau}^{T\wedge\tau}\underline F^{\eta,n}(s,X,Y^n_s,Z^n_s,U^n_s,\lambda_s,\hat\sigma_s^2)\,ds
-\int_{t\wedge\tau}^{T\wedge\tau} Z^n_s\cdot dX_s^{c,\mathbb P}\\
&-\int_{t\wedge\tau}^{T\wedge\tau} dM^{n,\mathbb P}_s
+(K^{n,\mathbb P}_{T\wedge\tau}-K^{n,\mathbb P}_{t\wedge\tau})
-\int_{t\wedge\tau}^{T\wedge\tau} U^n_s\,dH_s
\end{aligned}
\end{equation}
admits a unique solution
$(Y^n,Z^n,U^n,(K^{n,\mathbb P}),(M^{n,\mathbb P}))$ for every $n\ge 1$.

\emph{Step 2.} Since $\underline F^{\eta,n}(s,X,\pm \overline R,0,0,\lambda_s,\hat\sigma_s^2)
\le \| c\|_\infty$ and the terminal condition $\xi$ lies in
$[-\overline R+T\| c\|_\infty,\overline R-T\| c\|_\infty]\subset[-\overline R,\overline R]$, then from Remark \ref{rem:bounded} together with 
the comparison theorem~\ref{comparaisontheorem} for the truncated 2BSDE
\eqref{eq:CARA-truncated}, we deduce that $Y^n_t$ is bounded.
Theorem~\ref{comparaisontheorem} therefore gives, for every $n\ge 1$
and every $\mathbb P\in\mathcal P_0$,
\begin{equation}\label{eq:CARA-Linfty-Y}
|Y^n_t|\;\le\;\overline R,
\qquad t\in[0,T\wedge\tau],\;\mathbb P\text{-a.s.}
\end{equation}
The decomposition \eqref{2bsdedecomposition} applied to
\eqref{eq:CARA-truncated} gives the explicit
representation
$U^n_t = \xi^a_t - Y^{b,n}_t$ on $\{t<\tau\}$, where $Y^{b,n}$ is the
auxiliary 2BSDE value, itself bounded by $\overline R$ by the same
comparison argument. Moreover, since $|\xi^a_t|\le\|\xi\|_\infty$, we get
\begin{equation}\label{eq:CARA-Linfty-U}
|U^n_t|\;\le\;\|\xi\|_\infty+\overline R\;=\;2\|\xi\|_\infty+T\| c\|_\infty =: \overline U,
\qquad t\in[0,T\wedge\tau],\;\mathbb P\text{-a.s.}
\end{equation}
Note that both bounds are uniform in $n$.\vspace{0.3em}

We now turn to the BMO norm of $\hat\sigma^{1/2}Z^n$. 
We apply It\^o's formula to $\Phi_{\eta'}(Y^n_t)$ with
$\Phi_{\eta'}(y):=(\,e^{2\eta' y}-2\eta' y - 1\,)/(2(\eta')^2)$, for $\eta':=\eta+\delta$ with $\delta\in (0,\eta)$.  Under each
$\mathbb P\in\mathcal P_0$, between $t\wedge\tau$ and $T\wedge\tau$,
\begin{align*}
\Phi_\eta(\xi)-\Phi_\eta(Y^n_t)
=\;& \int_{t\wedge\tau}^{T\wedge\tau}\Phi'_\eta(Y^n_s)\,dY^n_s
+\tfrac12\int_{t\wedge\tau}^{T\wedge\tau}\Phi''_\eta(Y^n_s)\,d\langle Y^{n,c}\rangle_s\\
&+\sum_{s}\big[\Phi_\eta(Y^n_s)-\Phi_\eta(Y^n_{s-})-\Phi'_\eta(Y^n_{s-})\Delta Y^n_s\big].
\end{align*}
Using \eqref{eq:CARA-truncated}, Young inequality
$|\mu(t,X,\alpha,\beta)\cdot z|\le \overline\mu^2/(2\delta\,\hat\sigma_{\min}^2)
+ (\delta/2)\,\|\hat\sigma^{1/2}z\|^2$ and taking
$\mathbb E^{\mathbb P}[\cdot|\mathcal G_t]$ we obtain, on
$\{t\le T\wedge\tau\}$,
\begin{equation}\label{eq:BMO-Z}
\mathbb E^{\mathbb P}\!\left[\int_{t\wedge\tau}^{T\wedge\tau}\|\hat\sigma_s^{1/2}Z^n_s\|^2\,ds\,\Big|\,\mathcal G_t\right]
\;\le\;
\Phi_{\eta'}(\overline R) + \overline R\,e^{2\eta'\overline R}\,T\,\overline c_F
+ \overline\lambda_{\hat p}\,e^{\eta\overline U},
\end{equation}
where $\overline c_F:=\| c\|_\infty+\|\mu\|_\infty^2/(2\delta\,\hat\sigma_{\min}^2)$ and
$\overline\lambda_{\hat p}:=\esssup_\rho\mathbb E^{\mathbb P}[\int_\rho^T|\lambda_s|^{\hat p}\,ds\,|\,\mathcal G_\rho]^{1/\hat p}$
is finite by the standing integrability hypothesis on $\lambda$ from
Section~\ref{sec:math}.  The right-hand side of
\eqref{eq:BMO-Z} is independent of~$n$ and of the choice of
$\mathbb P\in\mathcal P_0$, so
$\|\hat\sigma^{1/2}Z^n\|_{BMO(\mathbb P,\mathcal G^{\mathbb P+})}$ is
bounded uniformly in $n$ and $\mathbb P$.

\emph{Step 3.}  Fix $n_0$ large enough as for example
\[
n_0\;:=\;\Big\lceil\,\overline U + \,e^{\eta\overline R}\,\big(\Phi_\eta(\overline R) + \overline R(T\|c\|_\infty+\overline c_F) + \overline\lambda_{\hat p}e^{\eta\overline U}\big)^{1/2}/\hat\sigma_{\min}\,\Big\rceil + 1,
\]
where $\hat\sigma_{\min}>0$ is the lower bound on $\hat\sigma$.  Then
\eqref{eq:CARA-Linfty-U} gives $|U^n_t|\le\overline U<n_0$, and Doob's
maximal inequality together with \eqref{eq:BMO-Z} gives
 $\|Z^n_t\|<n_0$
$\mathbb P\otimes dt$-a.e., for every $n\ge n_0$.  Consequently, on the
trajectories of $(Y^n,Z^n,U^n)$ the truncation $T_n$ acts as the
identity, and
$(Y^n,Z^n,U^n,(K^{n,\mathbb P}),(M^{n,\mathbb P}))$ solves the
untruncated 2BSDE \eqref{eq:CARA-2bsde}.  Setting
$(Y^\eta,Z^\eta,U^\eta,(K^{\eta,\mathbb P}),(M^{\eta,\mathbb P})):=(Y^{n_0},Z^{n_0},U^{n_0},(K^{n_0,\mathbb P}),(M^{n_0,\mathbb P}))$
gives a bounded solution of \eqref{eq:CARA-2bsde} in the sense of
Definition~\ref{def:2bsde}, with the same $L^\infty$ bound on
$(Y^\eta,U^\eta)$ as in Steps~2 and the BMO bound on
$\hat\sigma^{1/2}Z^\eta$ from Step~2. The minimality condition on the family $(K^{\eta,\mathbb P})$ is
inherited from the minimality of $(K^{n_0,\mathbb P})$, which itself
follows from the Lipschitz construction in
Theorem~\ref{thm:lipschitz}.
\vspace{0.5em}

\textit{Step 4.} The uniqueness in the class of bounded $Y,U$ solutions follows from a similar linearisation argument of Lemma~\ref{lemma:bsde}.
\end{proof}

We have the following characterization of the value of the problem \eqref{eq:CARAOpt} together with its optimizers.
\begin{theorem}\label{thm:CARA-verification}
Suppose Assumption~\ref{assumption:CARA-bounded} holds.  Define $(Y^{\eta},Z^{\eta},U^{\eta},(K^{\eta,\mathbb P}),(M^{\eta,\mathbb P}))$ as the unique solution to the 2BSDE
\eqref{eq:CARA-2bsde}.
Then $
V_0^A(\xi)\;=\;-\,e^{-\eta\,Y_0^{\eta}}.$
Furthermore, $(\alpha^\star,\mathbb P^\star,\beta^\star)\in\mathcal A\times\mathcal N^{\alpha^\star}$
is optimal if and only if
\begin{enumerate}[label=(\roman*)]
\item $(\alpha^\star,\beta^\star)$ attains the saddle point in
\eqref{eq:CARA-Hamiltonian} with $(y,z,u)=(Y_s^{\eta},Z_s^{\eta},U_s^{\eta})$,
$dt\otimes\mathbb P^\star$-a.e.;
\item $K^{\eta,\mathbb P^\star}_{T\wedge\tau}=0$, $\mathbb P^\star$-a.s.
\end{enumerate}
\end{theorem}

\begin{proof} The proof follows the same line than the proof of Theorem \ref{thm:ambiguity} by using a log transformation of the value function of type 
$
\mathcal Y_t^{\alpha,\mathbb P,\beta}\;:=\;-\tfrac{1}{\eta}\log(-V_t^{\alpha,\mathbb P,\beta})$ combined with the existence of the solution to the 2BSDE \eqref{eq:CARA-2bsde} from Proposition \ref{prop:CARA-existence} and since Decomposition \eqref{2bsdedecomposition} applied to it.
\end{proof}

\end{document}